\title{Model theory and groups}
\date{\today}
\author
{Anand Pillay\thanks{Supported by NSF grants  DMS 1665035 and DMS-1760212}\\University of Notre Dame  }
\newtheorem{Theorem}{Theorem}[section]
\newtheorem{Proposition}[Theorem]{Proposition}
\newtheorem{Definition}[Theorem]{Definition}
\newtheorem{Remark}[Theorem]{Remark}
\newtheorem{Lemma}[Theorem]{Lemma}
\newtheorem{Fact}[Theorem]{Fact}
\newtheorem{Example}[Theorem]{Example}
\newtheorem{Question}[Theorem]{Question}
\newtheorem{Problem}[Theorem]{Problem}
\newcommand{\R}{\mathbb R}   
\newcommand{\Q}{\mathbb Q}  
\newcommand{\Z}{\mathbb Z}  
\newcommand{\N}{\mathbb N}  
\newcommand{\F}{\mathbb F}
\newcommand{\C}{\mathbb C}
\begin{document}
\maketitle

\begin{abstract} This paper is about various ways in which groups arise or are of interest in model theory. In Section I briefly introduce three important classes of first order theories, stable theories, simple theories, and $NIP$ theories. Section 2 is about the classification of groups definable in specific theories or structures, mainly fields, and the relationship to algebraic groups.  In Section 3 I study generalized stability and definable groups in more detail, giving the theory of ``generic types" in the various contexts. I  also  discuss $1$-based theories and groups.  Section 4 is about the compact Hausdorff group $G/G^{00}$ attached to a definable group and how it may carry information in various contexts (including approximate subgroups).  In Section 5,  I discuss Galois theory, including the various Galois groups attached to first order theories, various kinds of strong types, and definable groups of automorphisms. In Section 6, I study various points of interaction between topological dynamics and definable groups,  in particular ``Newelski's conjecture" relating $G/G^{00}$ to the ``Ellis group".  And in Section 7, I touch on the model theory of the free group.

\end{abstract}

\section{Introduction and preliminaries}
I will discuss several points of interaction between model theory and group theory.  This is not so much about explicit applications of model theory to group theory, as about the model-theoretic perspective on groups, and some mathematical implications. It is a  personal account, influenced by my own interests,  preoccupations, and mathematical trajectory, and includes both established material, as well as some fairly recent developments, although there will be many important and  topical things  that I will say nothing about.

Model theory studies first order theories $T$, often complete. The study of specific first order theories, such as set theory, or differentially closed fields, can be identified with ``applications" of model theory, whereas the study of broad classes of first order theories (such as {\em all} theories, or stable theories)  is what is often considered as ``pure" model theory.  There are various invariants of a first order theory $T$. One is the category $Mod(T)$ of models of $T$ (where the morphisms are elementary embeddings). Most of the early material in a basic model theory course (Lowenheim-Skolem, prime models, saturated models, omitting types, Morley's theorem) concerns this category.  Another invariant is  $Def(T)$, the category of definable sets. $Def(T)$ can be thought of syntactically or semantically. Syntactically the objects are formulas $\phi({\bar x})$ (in the language of $T$, and where $\bar x$ denotes the tuple of free variables in $\phi$) up to equivalence modulo $T$. The morphisms are given by formulas $\psi({\bar x}, {\bar y})$ such that $T$ says that $\psi$ is the graph of a (partial) function. The semantic interpretation is the obvious thing and coincides with the syntactic interpretation when $T$ is complete. 

In many mathematical categories, such as the categories of algebraic varieties and differentiable manifolds, the group objects (algebraic groups, Lie groups, respectively) play an important role.  And the same holds for the category $Def(T)$. The group objects of $Def(T)$ are what we call the groups definable in $T$.  In many cases, especially on the stability side of model theory, definable groups arise naturally. For example if some definable relationship between definable sets $X$ and $Y$ needs parameters to be seen, then a nontrivial definable group appears (as a ``definable automorphism group"). The appearance of such groups was behind Hrushovski's positive answer to Shelah's question whether  ``unidimensional stable theories are superstable".  What we call {\em stable group theory} is the machinery of stability theory (independence, forking, ...) in the presence of a definable group action or definable group operation. It is considered part of ``general stability" and rather recently the fundamental theorem of stable group theory was seen to translate into a strong ``arithmetic regularity" theorem for finite groups $G$ equipped with a distinguished subset $A$, under a stability assumption on $A$ \cite{CPT1} (see also Section 4.5).

From the point of view of geometric stability theory and its generalizations, whether or not (infinite) groups are definable in a theory $T$ is one measure of the complexity of $T$. And assuming there are such infinite definable groups $G$, the complexity of the class of definable (in the ambient theory) subsets of $G$, $G\times G$, etc.  is another measure of complexity of $T$. 

There are  many things that I will not explicitly include in this paper. One is automorphism groups of $\omega$-categorical structures, which was an important subject in the 1980's, and has recently come back into prominence via Ramsey theory and topological dynamics. Another is the body of work around the Cherlin-Zilber conjecture  that simple groups of finite Morley rank are algebraic groups (over algebraically closed fields), which turned into a highly specialized subject with a tentative connection to the rest of stability theory. 
One more is the model theory of modules which again became a rather niche area. Groups of finite Morley rank (in the group language or with appropriate additional structure) and $R$-modules (in the usual language which has functions for scalar multiplication by elements of $R$) are examples of {\em stable groups}. 

Another non-included topic is Hrushovski's  group configuration theorem which gives an abstract model-theoretic context for recovering definable groups (see Chapter 5 of \cite{Pillay-GST}).  The proofs of function field Mordell-Lang in  \cite{Hrushovski-ML} are also an application of stable group theory, or rather its incarnation in specific stable theories.  The really new theorem is in positive characteristic which I will not discuss, although I will mention  the characteristic $0$ case in Section 2.4.

Pseudofinite groups (more or less ultraproducts of finite groups) will be touched on in a few ways, although not systematically. (But see Section 4.5.)  An early application of the model theory of groups definable in pseudofinite fields (and more generally groups definable in simple theories) to algebraic groups over finite fields (strong approximation) was in \cite{Hrushovski-Pillay-algebraicgroups-finitefields}  but will not be discussed in detail here. 

At the minimum I would hope that this article shows or reveals the flexibility of model theory in dealing with many different mathematical  topics and environments,  from a common point of view or standpoint.


I will assume a basic knowledge of model theory; languages $L$, formulas, first order theories $T$, structures, definable sets, types, elementary maps,  saturated models, as well as ``imaginaries".   See \cite{Marker} or \cite{Tent-Ziegler}, or my course notes \cite{Pillay-notes}.  Unless said otherwise variables $x,y$ range over finite tuples in the case of the traditional case of $1$-sorted structures, and over arbitrary sorts in the many sorted case (including $T^{eq}$). 

Notions such as quantifier-elimination and model-completeness are connected with the study of specific theories in a specific language or vocabulary.  $T$ has quantifier elimination if every formula $\phi(\bar x)$ is equivalent, modulo $T$ to a quantifier-free formula $\psi(\bar x)$.  And $T$ is model complete if whenever $M$ is a substructure of $N$, and both are models of $T$, then $M$ is an elementary substructure of $N$ ($M\prec N$).  Quantifier-elimination implies model completeness. One can always force quantifier elimination by the process of Morleyization: adjoining new relation symbols for all formulas (as well as their tautological definitions).  So from the point of view of model theory in and for itself, we can  always assume that the theory in question has quantifier elimination. But not for the study of specific theories in a specific language.  

Given a complete theory $T$ we typically work inside a sufficiently saturated model of $T$, which we call ${\bar M}$. Namely all models we consider are assumed to be elementary substructures of ${\bar M}$.  Sufficiently saturated means $\kappa$-saturated and strongly $\kappa$-homogeneous for some large $\kappa$.  Or if the reader doesn't mind assuming some set theory, just $\kappa$-saturated of cardnality $\kappa$. 
We will often  identify definable sets with the formulas defining them.  Definable sets are defined with parameters unless stated otherwise. We sometimes say $A$-definable to mean definable with parameters from a set $A$ (in the model ${\bar M}$.)  For a formula with parameters $b$ we sometimes write it as $\phi(x,b)$ to specify the parameters $b$ (where $\phi(x,y)$ is an $L$-formula, and $\phi(x,b)$ is the result of substituting constants  for the elements of $b$ for $y$).

Concerning algebraic geometry, we will typically take the naive point of view of algebraic varieties as point sets; sets of points of finite tuples in a field, defined by finite systems of equations.  The geometric point of view means considering such point sets in an ambient algebraically closed field.  In general an algebraic variety (in the sense of Weil for example) is obtained by glueing together finitely many affine varieties along suitable transition maps (analogous to how real topological or differential manifolds are built); the transition maps are isomorphisms in the sense of algebraic geometry between Zariski open subsets of the various charts.  This Weil point of view is thematically close to the model theoretic point of view of definable sets as  point sets in a saturated model, and is given a systematic treatment in \cite{Poizat-book}, repeated in \cite{Pillay-ACF}. 

Alternatively, a variety over a field $k$ means an integral separated scheme of finite type over $k$.

I would like to thank an anonymous referee for some crucial  mathematical corrections, as well as many typographical corrections. Thanks also to Daniel Hoffmann, Purbita Jana, and Chieu-Minh Tran for their corrections, comments, and suggestions. 

\subsection{Stability}  The complete theory $T$ is said to be {\em stable} if there do not exist a formula $\phi(x,y)$, a model $M$ of $T$, and tuples $a_{i},b_{i}$ from $M$ for $i<\omega$ such that $M\models \phi(a_{i},b_{j})$ iff $i\leq j$. 
By compactness this is equivalent to saying that for each $L$-formula $\phi(x,y)$ there is $k$ such that some (any) model $M$ of $T$ the formula $\phi(x,y)$ (i.e. the bipartitite graph defined by $\phi(M)$) omits the $k$-half graph. In the last case, we say that the formula $\phi(x,y)$ is $k$-stable.

\vspace{5mm}
\noindent
Stability and stable groups (groups definable in stable theories) will be discussed below, but in the meantime let us mention a certain strengthening
that will be relevant.   Following Morley, $T$ is said to be  total transcendental (t.t.), or in the case of a countable language,   $\omega$-stable, if  for any model $M$ of $T$, the Cantor-Bendixson rank on the Boolean algebra of formulas in any given finite number of variables, and with parameters in $M$,  is ``defined", namely ordinal valued.  When $M$ is an $\omega$-saturated model of $T$, this Cantor-Bendixson rank is precisely the Morley rank of the formula.

The $\omega$-stability of a complete theory $T$ (in a countable language) was related to uncountable ``categoricity" in Morley's work \cite{Morley}, where $T$ is said to be $\kappa$-categorical if any two models of $T$ of cardinality $\kappa$ are isomorphic.  This motivated (I guess) introduction of the general notion of stability, by Shelah, which played a central role in Shelah's program to classify theories according to whether or not there is some classification of their models by roughly speaking, cardinal invariants (\cite{Shelah}).  In the meantime stable theories have been understood as the ``logically perfect" first order theories. Moreover the  notion of stability of a formula (or bipartitite graph) is now seen as a pervasive notion in mathematics,  appearing in Grothendick's 1952 thesis, as well as subsequent work on topological dynamics, which subsumes stable group theory. 

Superstability is a property in between stability and $\omega$-stability. It is characterized by the existence of a certain ordinal-valued continuous rank on types and formulas:  $R(\theta(x)) \geq \alpha+1$ if there are unboundedly many complete types $p$ (say over the monster model) which contain $\theta$ and with $R(p)\geq\alpha$.  Countable, non superstable theories have $2^{\kappa}$ models of cardnality $\kappa$ for all $\kappa>\omega$, hence in the classification problem, $T$ can be assumed to be superstable. 

There are very few ``natural" stable theories and/or groups;  the theory of an infinite set with only equality, the theory of any algebraically closed field (in the ring language), the theory of any abelian group (in the group language), the theory of differentially closed fields, and recently the theory of the (noncommutative) free group.

\subsection{Simplicity}
The machinery used to understand models of stable theories and definable sets in stable theories is called ``stability theory". The invention of this machinery by Shelah represented a level of model-theoretic sophistication which was not present in other parts of model theory, and stability was considered as a kind of singularity. 
Actually from early on, Shelah was interested in unstable theories, and had defined in \cite{Shelah-simple} the notion of a {\em simple theory} as one where every complete type over a set $B$ does not divide over some ``small" subset $A$ of $B$.  Dividing and the related notion of forking give rise to the fundamental notion of (in)dependence in stable theories, ``$a$ is independent from $b$ over $C$",  satisfying a number of ``algebraic" properties, as well as a ``uniqueness" property.  In his 1996 thesis \cite{Kim-thesis}  Byungham Kim showed that all these  algebraic properties also hold for (non)dividing in the broader class of simple theories.  This showed that the underlying machinery of stability was not a singularity, and applied to contexts such as the asymptotic theory of finite fields, which up to that point had been studied using only elementary model-theoretic tools (but sophisticated algebraic-geometric tools).  Other examples of simple, unstable, theories are (the theory of) the random graph, as well as $ACFA$, the model companion of the theory of fields with a generic automorphism. Stable group theory extended naturally to groups definable in simple theories, with interesting modifications, which are reflected to some extent in the ``stabilizer theorem" for approximate groups (although the latter context is far from simple). 
The technical notions of dividing, forking etc. will be given later.

\subsection{NIP}
An opposite (from simplicity)  generalization of stability is in the direction of $NIP$ (not the independence property) theories. 
In the context of a complete theory $T$ in language $L$, an $L$-formula $\phi(x,y)$ has (or is) $NIP$ if there does not exist a model $M$ of $T$ and tuples $\{a_{i}:i\in \N\}$ and $\{b_{s}:s\in {\mathcal P}(\N)\}$ such that $M\models \phi(a_{i},b_{s})$ iff $i\in s$.  This can be finitized using compactness,  giving the notion of $k$-$NIP$,  and corresponds to the family of definable sets $\phi(x,b)$ as $b$ ranges over $M$ having finite Vapnik-Chervonenkis dimension.  As suggested by the last sentence, this notion has appeared independently in a number of mathematical contexts; model theory, learning theory, functional analysis.
The $NIP$ theories include the much-studied and well behaved theories with some ``topological" character, such as dense linear orderings, real closed fields, $p$-adically closed fields, various theories of Henselian valued fields, ordered abelian groups.  And stability implies $NIP$ (formula-by-formula).  In fact $T$ is stable iff $T$ is simple and $T$ is $NIP$. 

$o$-minimal theories are also examples of $NIP$ theories, and the general theory of $o$-minimality was (at least from the point of view of Steinhorn and me) meant to generalize aspects of stability theory, such as strong minimality, to an unstable context.  In \cite{Pillay-groups-o-minimal} I tried to generalize theorems about groups and fields definable in stable theories (more specifically in theories of finite Morley rank) to groups and fields definable in $o$-minimal theories.  This used notions of dimension analogous to Morley rank. However in subsequent work (since the early 2000's) the actual notions of stability theory  (forking, definability of types, finite satisfiability), rather than analogues,  were applied to $NIP$ theories and groups definable therein, with striking consequences.

This was another surprise for me, as I had believed that simple theories were the maximum class of  theories for which the technical notions of stability theory were meaningful.  Again Shelah was at the beginning of these developments.

\section{Groups definable in specific structures}
We survey here the attempts to describe or classify groups definable in particular theories or structures. Sometimes,  this has independent mathematical interest and/or applications.  Usually some kind of relative quantifier elimination is involved in the classification, and sometimes some general theory such as stability,  or simplicity, plays a nontrivial role.
Of course any group is a definable group in $ZFC$. This seems like a meaningless remark, but in later sections we may see that it is not so stupid.

Some of what we say in this section will depend on material appearing in later sections.   Algebraic groups are defined in 
the next (sub)section.   Algebraic groups (as well as their groups of $K$-rational points where $K$ is not necessarily algebraically closed) are considerd here as ``known", and a general theme will be the relationship of definable groups to algebraic groups in the various examples, and what is specifically new or interesting in the definable category.

\subsection{Algebraically closed fields} 
The important basic facts about the theory $ACF$ of algebraically closed fields, in the ring language ($+,\times, -, 0,1$), are that the completions are obtained by fixing the characteristic, that  $ACF$ has quantifier elimination (across the characteristics), and that each completion is strongly minimal (in the home sort), namely all definable (with parameters) subsets of the home sort are finite or cofinite.  In particular each completion of $ACF$ is $\omega$-stable.  See \cite{Poizat-book} for more details about this section. 

Let us now fix the characteristic to be $0$, so we are dealing with the complete theory $ACF_{0}$. 
We let $K$ denote a model, namely an algebraically closed field of characteristic $0$ such as the complex numbers. 
One of the basic problems of algebraic geometry is the classification up to birational isomorphism of irreducible algebraic varieties over $K$. There is  a bijection between irreducible varieties $V$ over $K$, and complete types $p(\bar x)$ over $K$, ($p$ being the generic type over $K$ of $V$). And  $V, W$ are birationally isomorphic iff there are realizations ${\bar a}$ of $p_{V}$ and ${\bar b}$ of $p_{W}$ such that ${\bar a}$ and ${\bar b}$ are interdefinable over $K$ if and only there are definable subsets $V_{1}$, $W_{1}$ of $V, W$ respectively of maximal Morley rank and a definable bijection between $V_{1}$ and $W_{1}$.  So in this sense the birational classification of algebraic varieties coincides with the classification of definable sets up to definable bijection with respect to $ACF_{0}$.  However this observation  does not seem to have contributed much to the birational classification problem. 

An algebraic group is a group object in the category of algebraic varieties.  Weil's theorem that algebraic groups can be recovered from birational data, translates (with some work, prived by Hrushovski) into the theorem that definable groups coincide with algebraic groups. Namely any definable group can be definably equipped with the structure of an algebraic group, and that definable isomorphisms between two definable groups coincide with isomorphisms (in the sense of algebraic groups) between the corresponding algebraic groups.  This is somewhat subtle and it is worth paying attention to the precise definitions and notions. In fact it was via this theorem that I got an inkling of what an abstract algebraic variety is. 

On the other hand there is a structure theory for algebraic groups. There are two extreme cases of algebraic groups (over an algebraically closed field $K$),  namely linear algebraic groups (algebraic subgroups of some $GL(n,K)$) and abelian varieties  (algebraic groups whose underlying variety is a projective variety). An arbitrary (irreducible, or connected) algebraic group is an extension of an abelian variety by a linear algebraic group.  These two kinds of algebraic groups seem, from the outside, to belong to different parts of mathematics, in the sense that there are very few theorems dealing with {\em all} algebraic groups (other than theorems about commutative algebraic groups).  In fact the theory of commutative algebraic groups is deeply connected with the theory of differential forms of the first, second, and third, kind, on algebraic curves, as well as ``arithmetic algebraic geometry". 

In terms of identifying the definable groups in the theory $ACF$ there is nothing more to be said.
One can ask what exactly model theory can contribute to algebraic geometry via the first order theory $ACF$. Probably not so much, other than the group configuration theorem. However, aspects of algebraic geometry can be captured in richer ``tame" theories, as will be discussed below.

\subsection{Real closed fields}
We are dealing here with the theory $RCF$ of the structure $\R$ in the ring language mentioned above, or its definitional expansion $RCOF$ in the ordered ring language (noting that the nonnegative elements are precisely the squares).  The standard model is the field of real numbers with the usual ordering.  We will focus here on groups definable in the standard model, although things generalize suitably to arbitrary models, i.e. real closed (ordered) fields. Tarski's quantifier elimination theorem says that the sets definable in the structure $(\R,+,\times)$ are  the {\em semialgebraic sets}; where a semialgebraic set is, by definition,  a subset of $\R^{n}$ defined by  a finite disjunction of sets defined by $f({\bar x) = 0 \wedge  g_{1}(\bar x}) > 0 \wedge ... \wedge g_{k}(\bar x) > 0$, for $f,g_{i}$ polynomials over $\R$. 

We call groups definable in $(\R,+,\times)$  {\em semialgebraic} groups (that is, both the universe of the group and the graph of the group operation are semialgebraic sets). On the face of it there is no continuity in the definition of semialgebraic.  I will give a rather extended discussion around the problem of classifying semialgebraic groups. 

The analogue of the theorem that definable groups in $ACF$ can be definably equipped with the structure of algebraic groups, is that a semialgebraic group can be definably equipped with the structure of a {\em Nash group} and that moreover Nash groups are semialgebraic. (And again  there will be an equivalence of categories between semialgebraic groups and Nash groups). 

Nash manifolds originated in  work of John Nash \cite{Nash}, where he introduced the notion of a {\em real algebraic manifold}, which in current parlance is a compact affine Nash manifold. They appear again in \cite{Artin-Mazur} and were systematically studied in  \cite{Shiota}.  We follow \cite{Shiota}, in particular making the distinction between Nash manifolds and locally Nash manifolds. The definition is as follows: a Nash manifold is a real analytic manifold with a covering by finitely many open sets each of which is diffeomorphic to some open semialgebraic subset of some $\R^{n}$, and such that the transition functions are Nash, namely both analytic and semialgebraic.  So a Nash manifold  can be described as a ``semialgebraic real analytic manifold". By a Nash group we mean a group object in the category of Nash manifolds, so the underlying set is a Nash manifold and the group operation is Nash, namely analytic and semialgebraic when read in the semialgebraic charts. A Nash group is also a Lie group. The category of Nash groups is in between that of real algebraic groups and that of Lie groups  (in the sense of inclusions of categories). 

A more general theorem is that a group definable in an $o$-minimal expansion of the real field has definably the structure of a Lie group (in particular a topological group), and is proved in \cite{Pillay-groups-o-minimal} by adapting the ideas in the $ACF$ case.  This theorem specializes to the statement that semialgebraic groups ``equal"  Nash groups in the case of $(\R,+,\times)$.

What we call a {\em real algebraic group}  can be best described as the group  $G(\R)$ of real points of an algebraic group defined over $\R$.   Such a real algebraic group $G(\R)$  is also a topological (in fact Lie) group and has a connected component $G(\R)^{0}$ which is semialgebraic and of finite index. For example when $G = GL_n$, then $G(\R)^{0}$ is the collection of matrices of positive determinant.  Any open subgroup of $G(\R)$ is semialgebraic and lies in between $G(\R)$ and $G(\R)^{0}$.  With a slight abuse of terminology we will extend the notion of real algebraic group to include their open subgroups, in particular their topological connected component.  Notice that with this terminology, any semialgebraic subgroup of a real algebraic group will be real algebraic, as it will be open in its Zariski closure.

In any case, we will take real algebraic groups to be something known. And what remains to be done is to give a classification of Nash groups in terms of  real algebraic groups, their covers, and their quotients.

Let us be more precise. 

In \cite{Hrushovski-Pillay-groupsinlocalfields} it was shown that any Nash group is definably locally isomorphic to a real algebraic group, in the sense that there are open definable  neighbourhoods of the  identity of the Nash group and some real algebraic group and a definable bijection between them which respects the group operations (whenever they are defined). 
It follows that given any connected Nash group $G$ there is a (connected) real algebraic group $H$ and a locally Nash isomorphism between the universal covers $\tilde G$, $\tilde H$ of $G, H$, where these universal covers are both {\em locally Nash groups}. Locally Nash just allows an infinite rather than finite covering by open semialgebraic sets.   Hence $G$ itself is a quotient of $\tilde H$ by some discrete subgroup, and the question is to classify the discrete subgroups of $\tilde H$ such that the quotient, which is a priori locally Nash has a compatible Nash structure. 

This was basically solved in the commutative case (by me and Starchenko), a few years ago, but not yet written up. 

On the other hand, imposing the condition of being {\em affine} is a very strong condition on Nash groups. Affineness of a Nash manifold $X$ means that there is a Nash embedding  of $X$ into 
some $\R^{n}$.   Any real quasiprojective variety is affine, due to sterographic projection. Hence all real algebraic groups are affine. 
It was ``proved" in \cite{Hrushovski-Pillay-groupsinlocalfields} with a corrected proof in \cite{Hrushovski-Pillay-affine} that an affine Nash group $G$ (say connected) is virtually algebraic, namely there is a connected real algebraic group $H$ and a definable surjective homomorphism from $G$ to  $H$, with finite kernel. Namely affine Nash groups are finite covers of real algebraic groups.   The category of finite covers of real algebraic groups goes outside the real algebraic category. As is well-known there are finite covers (as Lie groups) of $SL(2,\R)$ which have no linear representation. 

On the other hand there do exist non-affine Nash groups, the simplest being $\R/\Z$ viewed as a Nash group.  One should be careful here: the category of Nash groups is in between the category of real algebraic groups and real Lie groups. 
All $1$-dimensional compact, connected Nash groups are isomorphic as Lie groups but not as Nash groups. These include $SO_{2}(\R)$, $E(\R)^{0}$ (for elliptic curves $E$ over $\R$, which are all real algebraic, hence affine as Nash groups), and $\R/\Z$ as above (the real unit interval with addition modulo $1$ and with $0$ and $1$ identified) and other examples, which are all isomorphic as Lie groups, but not as Nash groups.

\subsection{The $p$-adics}
 I will focus here on analogous questions for the field $\Q_{p}$ of $p$-adic numbers to those for the reals discussed in the last section.  In spite of the model theory of valued fields being a major area of research, these analogies do not seem to have been explored so much, maybe because the expected answers will be not so interesting.
I do not want to go into details of the model theory of the $p$-adics, other than a brief survey.  Macintyre proved a quantifier elimination theorem for $Th(\Q_{p},+,\times,-,0,1)$ after adding predicates for the $nth$ powers, for all $n$ \cite{Macintyre}.  This is in analogy with Tarski's  quantifier elimination for the (theory of the) real field after adding a predicate for the squares. 
The topology on $\Q_{p}$ is the valuation topology: where the basic open neighbourhoods of a point $a$ are $\{x:v(x-a) \geq r\}$, for $r\in \Z$.  With this topology $\Q_{p}$ is totally disconnected, and the ring $\Z_{p}$ of $p$-adic integers (valuation $\geq 0$) is compact, so profinite.  We have the notion of $p$-adic analytic function on an open subset of $\Q_{p}^{n}$, given by locally convergent power series, and hence we obtain the notion of a $p$-adic analytic manifold, and thus also a $p$-adic analytic group.

In the context of $\Q_p$, ``semialgebraic" means definable in the structure $(\Q_{p},+,\times,-,0,1)$ (which can be made more explicit using Macintyre's quantifier elimination theorem), and we obtain the notion of a $p$-adic Nash group, in analogy with the real case.  And again we have an equivalence of categories between groups definable in the field $\Q_{p}$ and $p$-adic Nash groups. 

As in the real case, by a $p$-adic algebraic group we first mean something of the form $G(\Q_{p})$ where $G$ is an algebraic group defined over $\Q_{p}$.  Such a group $G(\Q_{p})$ is of course a $p$-adic Nash group. And as in the real case, a problem or question is to find the relationship between $p$-adic Nash groups and $p$-adic algebraic groups. 

In the real case, passing from real algebraic groups to their open subgroups was a mild move; there is a smallest one (the topological connected  component) which has finite index and is semialgebraic and so all open subgroups are semialgebraic.
What happens in the $p$-adic case?  Note that $\Z_{p}$ is an (open) semialgebraic subgroup of the $p$-adic algebraic group $\Q_{p}$  (additively for example), but with infinite index.  Moreover any semialgebraic subgroup of a $p$-adic algebraic group is open in its Zariski closure.   So in analogy with the real case above  it makes sense (to me) to make a couple of moves:
first, as was  stated informally in \cite{Pillay-application-real-p-adic}.
\begin{Problem} Is any open subgroup of a $p$-adic algebraic group,  $p$-adic semialgebraic?
\end{Problem}

And secondly to redefine a $p$-adic algebraic group as a semialgebraic subgroup of a group of the form $G(\Q_{p})$ where $G$ is an algebraic group over $\Q_{p}$.

With this new definition, clearly all $p$-adic algebraic groups are $p$-adic Nash, and we believe that, in contradistinction with the real case, the converse holds too.

\begin{Problem} Up to definable (or semialgebraic or $p$-adic Nash) isomorphism, $p$-adic Nash groups are precisely the $p$-adic algebraic groups.
\end{Problem} 

\vspace{5mm}
\noindent
Both $RCF$ and the theory of $\Q_p$ are $NIP$ but this does not play much of a role in the classification results above. It was rather a relative quantifier elimination, possibly cell-decomposition, and the fact that model-theoretic and field-theoretic algebraic closures coincide.  In fact the latter, together with a delicate application of the group configuration theorem inside an ambient {\em algebraically closed} field, yields a local (in the topological sense) semialgebraic isomomorphism between a given definable group $G$ and a real/$p$-adic algebraic group, which is the first step in the clsssification results and problems.

We have been discussing definable, rather than interpretable groups (which coincide for $RCF$), and the classification of the interpretable groups in $\Q_{p}$ must take account of groups living in the value group $(\Z,+,<)$ as well as the interaction with $p$-adic Nash groups.

There have been major developments in the model theory of valued fields, where the theory $ACVF$ of algebraically closed valued fields provides an appropriate ``universal domain".  Groups definable therein have been studied too, and this is rather different, in spirit, to what is discussed above (although related).


\subsection{Differentially closed fields}
The theory of differentially closed fields of characteristic $0$, $DCF_{0}$  (in the language of unitary rings together with a symbol $\partial$ for the derivation)  has been among the richest stable theories for many years and is still being investigated. Some basic references here are \cite{Marker-differential} and \cite{Pillay-foundational}. This section is actually fairly close to \cite{Marker-differential}.

$DCF_{0}$ has quantifer elimination (in the language given above), is $\omega$-stable and has  infinite Morley rank: the formula ``x=x" has Morley rank $\omega$.  The connection between definability in $DCF_{0}$ and actual differential equations and their solutions is subtle and worth exploring. At the least there is some impact on  {\em functional transcendence}, the investigation of algebraic relations between solutions of an algebraic differential equation equation over $\C(t)$ for example. 

Often ${\mathcal U}$ is used to denote a saturated model of $DCF_{0}$, or a universal domain for differential algebra, in the sense of Kolchin. An important feature (shared by $ACF$ and $RCF$) is that $DCF_{0}$ has elimination of imaginaries, meaning that interpretability and definability coincide.

${\mathcal U}$ as well as its field ${\mathcal C}$ of constants ($\partial (x) = 0$) are algebraically closed fields.  So we can view both $\mathcal U$ and $\mathcal C$ as ``universal domains" for algebraic geometry.

A differential polynomial in differential indeterminates $x_{1},..,x_{n}$ over a differential field $(K,\partial)$ is a polynomial over $K$ in indeterminates 
\newline
$x_{1},..,x_{n}, \partial(x_{1}),...,\partial(x_{n}), \partial^{(2)}(x_{1}),...,\partial^{(2)}(x_{n}),..........$.  Such a differential polynomial can be evaluated at a point $(a_{1},..,a_{n})$ in any differential field extending $(K,\partial)$. 

The analogue of an affine algebraic variety is an affine differential algebraic variety, a subset of ${\mathcal U}^{n}$ defined as the common $0$-set of  a finite system of differential polynomials $P(x_{1},..,x_{n})$ with coefficients from $\mathcal U$.   When $\partial$ does not appear in the differential polynomials $P$ then this is just an algebraic variety. 
The so-called Kolchin topology on an affine differential algebraic variety $X$ has as closed subsets of $X$ the differential algebraic subvarieties. 

There have been various attempts to develop ``differential algebraic geometry" in analogy with algebraic geometry. 
(By the way what we call differential algebraic geometry is something distinct from differential geometry.) This includes a Weil-style approach, where abstract differential algebraic varieties are built from affine differential algebraic varieties by gluing where the transition maps are ``differential rational" isomorphisms between Kolchin opens  (for example). There are also scheme-theoretic approaches.  Kolchin's own definitions (even of algebraic groups) are somewhat idiosynchratic, influenced by his account of differential Galois theory.  Whatever the relevant definitions, the categories of definable groups and differential algebraic groups coincide, as in $ACF$ for definable vs algebraic, and as in $RCF$ for definable vs Nash.
But more important is the simplifying fact (theorem) that up to definable isomorphism any definable group is a definable (differential algebraic) subgroup of an algebraic group.  However in contradistinction with the cases of $RCF$, $p$-adics, pseudofinite fields, there are qualitatively new phenomena appearing in such definable subgroups.  One reason for this is that an algebraic group $G$ (as a definable group in ${\mathcal U}$)  has infinite Morley rank, whereas there will always be definable subgroups of finite Morley rank. In fact the finite Morley rank definable groups are part of the rather rich theory of definable sets of finite Morley rank (in $DCF_{0}$). 

In certain technical senses the building blocks of finite Morley rank definable sets (in a given, saturated, model $\bar M$ of a theory $T$) are the {\em strongly minimal sets}. Recall that a strongly minimal set $X$ is a definable set in ${\bar M}$
which is infinite and such that moreover any definable subset of $X$ is finite or cofinite.  The identification of strongly minimal sets and their properties in $DCF_{0}$ is an open problem, with many ramifications.
The case of strongly minimal definable groups, on the other hand, is well-understood and is the key input to a model-theoretic proof of function field Mordell-Lang in characteristic $0$.  Theorems 2.3 and 2.4 below are due to Hrushovski and Sokolovic (unpublished), together with the structure of $1$-based groups (which will be discussed later). But a self contained account is in \cite{Nagloo-Pillay}. 

\begin{Theorem} Let $G$ be a strongly minimal definable group in ${\mathcal U}$. Then up to definable isomorphism $G$ is one of the following:
\newline
(i) $({\mathcal C},+)$,
\newline
(ii)  $({\mathcal C}^{*},\times)$.
\newline
(iiii) $E({\mathcal C})$ where $E$ is an elliptic curve (one-dimensional abelian variety) defined over ${\mathcal C}$.
\newline
(iv) $A^{\sharp}$ where $A$ is a simple abelian variety defined over ${\mathcal U}$ which is not isomorphic (as an algebraic group) to an abelian variety defined over $\mathcal C$, and where $A^{\sharp}$ denotes the Kolchin closure of the torsion subgroup of $A$. 
\end{Theorem}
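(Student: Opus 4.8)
The plan is to combine stable group theory, the embedding of definable groups into algebraic groups quoted above, and the Zilber-style dichotomy for strongly minimal sets in $DCF_{0}$. First I would record that $G$ is connected and commutative. Since $G$ is strongly minimal, any subgroup is finite or cofinite, and a proper subgroup cannot be cofinite (its complement is a union of cosets), so every proper definable subgroup is finite; thus $G$ is connected of Morley rank $1$. A connected group of Morley rank $1$ is abelian by Reineke's theorem: a non-central element would have a proper, hence finite, centralizer and so a cofinite conjugacy class, and two distinct cofinite classes cannot be disjoint. Using the theorem stated above, that every definable group in ${\mathcal U}$ is definably isomorphic to a differential-algebraic subgroup of an algebraic group, I may then regard $G$ as a Zariski-dense differential-algebraic subgroup of a connected \emph{commutative} algebraic group $A$ over ${\mathcal U}$, taking $A$ to be the Zariski closure of $G$.

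The next input is the dichotomy. By the Hrushovski--Sokolovic analysis of $DCF_{0}$ via Zariski geometries (see \cite{Nagloo-Pillay}), the strongly minimal set $G$ is either non-orthogonal to the field of constants ${\mathcal C}$, or else locally modular (equivalently $1$-based) and orthogonal to ${\mathcal C}$. In the non-orthogonal case, since ${\mathcal C}$ is a pure algebraically closed field sitting inside ${\mathcal U}$, a strongly minimal group non-orthogonal to it is field-internal and hence, by elimination of imaginaries in $ACF$, definably isomorphic to a group definable in ${\mathcal C}$; being connected of dimension $1$ and commutative, this is a connected one-dimensional commutative algebraic group over ${\mathcal C}$. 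The connected one-dimensional commutative algebraic groups over an algebraically closed field of characteristic $0$ are exactly $\mathbb{G}_a$, $\mathbb{G}_m$, and the elliptic curves, which yields cases (i), (ii), (iii).

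In the modular case I must arrive at (iv). Working inside $A$, I would first show $A$ has no linear part. Its linear part is an extension of a torus by a vector group, and the logarithmic derivative $x \mapsto \partial x / x$ on the torus and the derivation $\partial$ on the vector group are differential-algebraic homomorphisms with kernels the constant points $\mathbb{G}_m({\mathcal C})$ and $\mathbb{G}_a({\mathcal C})$; a nontrivial linear part would therefore produce a strongly minimal subquotient of $G$ non-orthogonal to ${\mathcal C}$, contradicting orthogonality. Hence $A$ is an abelian variety, and decomposing it up to isogeny into simple factors and using strong minimality I may take $A$ simple. The relevant strongly minimal subgroup is then the Kolchin closure of the torsion, $A^{\sharp}$: the central theorem of Hrushovski--Sokolovic is precisely that, for $A$ simple with trivial ${\mathcal C}$-trace, $A^{\sharp}$ is strongly minimal and $1$-based. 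Since $A$ is simple, rigidity of definable subgroups in the $1$-based setting forces $A^{\sharp}$ to have no proper infinite definable subgroup, and the strongly minimal Zariski-dense subgroup $G$ must coincide with $A^{\sharp}$ up to definable isogeny. Finally, orthogonality to ${\mathcal C}$ is exactly what forbids $A$ from being isomorphic to an abelian variety over ${\mathcal C}$ (otherwise $A^{\sharp}$ would be non-orthogonal to the constants), giving case (iv).

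The hard part is the dichotomy together with the strong minimality of $A^{\sharp}$ for an arbitrary simple abelian variety of trivial trace: that the Morley rank collapses to $1$ regardless of $\dim A$ is the genuinely non-trivial fact, resting on the Zariski-geometry machinery and the theory of $1$-based groups. Everything else is organizational, reducing the classification to these two inputs, the embedding theorem, and the elementary structure theory of commutative algebraic groups.
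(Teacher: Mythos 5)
Your outline can only be measured against the argument the paper points to, not against a proof in the paper itself: the text explicitly attributes Theorems 2.3 and 2.4 to Hrushovski and Sokolovic (unpublished) and refers to \cite{Nagloo-Pillay} for a self-contained account. At the level of architecture you do match that argument: strong minimality plus Reineke gives connected and commutative, the quoted embedding theorem realizes $G$ as a Zariski-dense definable subgroup of a connected commutative algebraic group $A$, and the Hrushovski--Sokolovic dichotomy splits the analysis into the non-orthogonal-to-${\mathcal C}$ case (yielding (i)--(iii), essentially correctly sketched, since in dimension one the finite-kernel issues are absorbed) and the modular case; you also correctly isolate the two deep inputs. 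The genuine gaps are both in the modular case. First, your argument that $A$ has no linear part does not work: the logarithmic derivative and the derivation are homomorphisms defined on the torus and vector subgroups themselves, not on $A$, so they produce a ${\mathcal C}$-related subquotient of $G$ only if $G$ meets the linear part $L$ in an infinite set. What your argument actually shows is that $G\not\subseteq L$; hence $G\cap L$ is finite, possibly trivial, and then no contradiction with orthogonality appears. The worry is not idle: for the universal vectorial extension $\tilde{B}$ of an abelian variety $B$, no proper algebraic subgroup of $\tilde{B}$ projects onto $B$, so a strongly minimal subgroup covering $B^{\sharp}$ and meeting the vector part trivially would automatically be Zariski dense in $\tilde{B}$. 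Excluding such configurations is a theorem (it amounts to non-splitting statements for $\tilde{B}^{\partial}\to B^{\sharp}$, resting on Manin's theorem of the kernel; compare \cite{BP}), not a consequence of orthogonality; alternatively one restructures the proof to project to $B=A/L$, where $\pi|_{G}$ has finite kernel, and then carries finite kernels through the rest of the argument.

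That brings up the second gap: your proof delivers an isogeny where the theorem asserts a definable \emph{isomorphism}. The reduction to a simple factor of the abelian part and your final comparison of $G$ with $A^{\sharp}$ each produce a priori only maps with finite kernel, and your own conclusion retreats to ``up to definable isogeny.'' Moreover, the appeal to rigidity does not show what is needed at the last step: rigidity says nothing about why $G$ meets $A^{\sharp}$ in an infinite set in the first place, and a priori $G$ and $A^{\sharp}$ could be almost disjoint Zariski-dense strongly minimal subgroups of $A$. The missing ingredient is again Manin-kernel theory: for $A$ simple with ${\mathcal C}$-trace zero, $A/A^{\sharp}$ embeds definably into a power of the additive group (the Manin map, with kernel exactly $A^{\sharp}$ by the theorem of the kernel), and infinite finite-rank definable subgroups of vector groups are ${\mathcal C}$-internal. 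So if $G\cap A^{\sharp}$ were finite, the image of $G$ in $A/A^{\sharp}$ would be an infinite modular strongly minimal group non-orthogonal to ${\mathcal C}$, a contradiction; hence $G\cap A^{\sharp}$ is infinite, so by strong minimality of $G$ we get $G\subseteq A^{\sharp}$, and by strong minimality of $A^{\sharp}$ we get $G=A^{\sharp}$ on the nose. This is how one obtains isomorphism rather than isogeny, and it is exactly the kind of step (alongside the dichotomy and the strong minimality of $A^{\sharp}$) where the account in \cite{Nagloo-Pillay} has to do real work that your sketch omits.
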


Some explanations are in order. Abelian varieties were defined above in 2.1 as (infinite) connected algebraic groups with underlying variety projective.  So as such these are definable groups in ${\mathcal U}$, but of infinite Morley rank.  $A$ is said to be simple if it has no proper nontrivial abelian subvarieties (algebraic subgroups). When $A$ is of dimension one (an elliptic curve) $A$ is simple, but there examples in all dimensions $>1$.  The group of torsion points of an abelian variety is Zariski-dense, and its Kolchin closure (the smallest definable subgroup of $A$ containing the torsion) $A^{\sharp}$  is a connected definable subgroup of $A({\mathcal U})$ of finite Morley  rank. 

We also have:
\begin{Theorem}
In Case (iv) of Theorem 2.3, $A^{\sharp}$ is ``modular" or ``$1$-based", meaning that or at least equivalent to the property that every definable (in ${\mathcal U}$) subset of any Cartesian product $A^{\sharp} \times ....\times A^{\sharp}$ is a (finite) Boolean combination of cosets (translates) of definable subgroups, where moreover these definable subgroups are defined over the algebraic closure of the set of parameters defining $A^{\sharp}$. 
\end{Theorem}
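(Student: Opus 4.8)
The plan is to prove that $A^{\sharp}$ is $1$-based (locally modular), and then to read off the asserted description of definable subsets of the powers $A^{\sharp} \times \cdots \times A^{\sharp}$ from the general structure theorem for $1$-based groups of finite Morley rank. That structure theorem --- the ``structure of $1$-based groups'' referred to just before the statement --- says precisely that in a $1$-based group every definable subset of a Cartesian power is a finite Boolean combination of cosets of definable subgroups, and that these subgroups may be taken to be defined over the algebraic closure of the parameters over which the ambient group is defined. So the entire content to be established is the modularity of $A^{\sharp}$, and everything else is an invocation of that theorem.

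The decisive external input is the Zilber dichotomy for $DCF_0$ (the Hrushovski-Sokolovic theorem): every minimal type in a differentially closed field is either locally modular or non-orthogonal to the field of constants $\mathcal{C}$. Since $A^{\sharp}$ is a connected definable group of finite Morley rank, its modularity reduces to the local modularity of its minimal types, because a finite-Morley-rank group all of whose minimal types are locally modular is itself $1$-based. Thus, applying the dichotomy, it suffices to show that no minimal type of $A^{\sharp}$ is non-orthogonal to $\mathcal{C}$; equivalently, that $A^{\sharp}$ is orthogonal to the constants.

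Establishing this orthogonality is the heart of the matter and the step I expect to be the main obstacle, because it is exactly where the hypothesis that $A$ does not descend to $\mathcal{C}$ must be used. I would argue by contraposition. Suppose some minimal type of $A^{\sharp}$ is non-orthogonal to $\mathcal{C}$. Because $A^{\sharp}$ is connected and $A$ is simple --- so that $A^{\sharp}$ has no proper infinite definable subgroups and hence no nontrivial definable quotients with infinite kernel --- non-orthogonality to the constants, together with the group structure, upgrades to almost $\mathcal{C}$-internality of $A^{\sharp}$: there is a definable finite-to-one correspondence, over some parameters, between a generic part of $A^{\sharp}$ and a $\mathcal{C}$-definable set. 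One then has to convert this model-theoretic internality into algebraic geometry. Here I would use that the torsion of $A$ is Zariski-dense in $A$ and lies in $A^{\sharp}$, together with the rigidity of abelian varieties, to promote the $\mathcal{C}$-internal correspondence to an actual isogeny between $A$ and an abelian variety $B$ defined over $\mathcal{C}$. This contradicts the standing hypothesis in Case (iv) that $A$ is not isomorphic (equivalently, not isogenous) to an abelian variety over $\mathcal{C}$. The delicate point is the passage from ``a generic finite-to-one definable correspondence into $\mathcal{C}$'' to ``a global isogeny defined over $\mathcal{C}$'': controlling the field of definition and the algebraicity of the correspondence, rather than merely its definability in the differential structure, is what genuinely requires the rigidity of abelian varieties and the Zariski density of the torsion.

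Granting the orthogonality, the dichotomy forces every minimal type of $A^{\sharp}$ to be locally modular, hence $A^{\sharp}$ is $1$-based. Invoking the structure theorem for $1$-based groups then yields the stated conclusion: every definable subset of $A^{\sharp} \times \cdots \times A^{\sharp}$ is a finite Boolean combination of cosets of definable subgroups, with the subgroups defined over the algebraic closure of the parameters defining $A^{\sharp}$. I would finally remark that, since $A$ is simple, the definable subgroups appearing here are severely constrained --- the connected ones come essentially from graphs of isogenies between the factors --- which makes the Boolean-combination description quite concrete, although this refinement is not needed for the theorem as stated.
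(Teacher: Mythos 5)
Your proposal is correct and follows exactly the route the paper points to: the paper gives no proof of this theorem, attributing it to Hrushovski--Sokolovic (the dichotomy that minimal types in $DCF_0$ are locally modular or non-orthogonal to the constants) together with the Hrushovski--Pillay structure theorem for $1$-based groups, with the self-contained account deferred to the cited Nagloo--Pillay paper. Your reconstruction --- dichotomy, then a descent/isogeny argument from non-orthogonality to $\mathcal{C}$ contradicting the trace-zero hypothesis of Case (iv), then the coset structure theorem --- is precisely that argument, and correctly identifies the descent step as the place where the Case (iv) hypothesis and the rigidity of abelian varieties do the real work.
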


Mordell-Lang and Manin-Mumford type problems originate with the Mordell conjecture (proved by Faltings) that a curve of genus $\geq 2$ over the rationals has only finitely many rational points.

Hrushovski proved function field Mordell-Lang in all characteristics \cite{Hrushovski-ML}. 

A very special case of function field Mordell-Lang as well as function field Manin-Mumford follows  quickly from Theorems 2.3 and 2.4.

\begin{Theorem} Let $k<K$ be algebraically closed fields of characteristic $0$, and $A$ a simple abelian variety defined over $K$ which has $k$-trace $0$ (namely is not isomorphic to an abelian variety defined over $k$).  Let $X$ be an irreducible subvariety of the Cartesian product $A^{n}$ (defined over $K$) such that the intersection of $X$ with the group of torsion points of $A^{n}$ is Zariski-dense (in $X$).  Then $X$ is a translate of an abelian subvariety of $A^{n}$

\end{Theorem}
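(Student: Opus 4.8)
The plan is to transport the problem into a differentially closed field and then apply the modularity statement of Theorem 2.4. First I would equip $K$ (or a finitely generated algebraically closed subfield containing all the parameters needed to define $A$ and $X$) with a derivation $\partial$ extending the trivial derivation on $k$ and having field of constants exactly $k$; such a derivation exists because $k$ is algebraically closed of characteristic $0$ (reduce to finite transcendence degree and choose the values on a transcendence basis so that no rational function becomes a constant, and note that an algebraic extension adds no new constants over an algebraically closed constant field). I would then embed $(K,\partial)$ into a saturated ${\mathcal U}\models DCF_{0}$. The key feature of this setup is that, $k$ being algebraically closed, the differential closure adds no new constants, so the constant field ${\mathcal C}$ of ${\mathcal U}$ is again $k$. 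Since $A$ is simple with $k$-trace $0$, it is not isogenous to any abelian variety defined over ${\mathcal C}$, so $A$ falls under Case (iv) of Theorem 2.3, and hence by Theorem 2.4 the group $A^{\sharp}$ is modular.

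Next I would pass to the group $G := (A^{\sharp})^{n}$, which equals $(A^{n})^{\sharp}$ (the Kolchin closure of a product of subgroups is the product of the Kolchin closures) and is a connected definable group of finite Morley rank in ${\mathcal U}$. The crucial containment is that the full torsion subgroup $\mathrm{Tor}(A^{n})$ lies in $G$, since $A^{\sharp}$ is by definition the Kolchin closure of $\mathrm{Tor}(A)$. Setting $Y := X({\mathcal U})\cap G$, a definable subset of $G$, the hypothesis that $X\cap \mathrm{Tor}(A^{n})$ is Zariski-dense in $X$ together with $\mathrm{Tor}(A^{n})\subseteq Y\subseteq X$ shows that $Y$ is Zariski-dense in $X$; that is, the Zariski closure of $Y$ equals $X$. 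By the modularity in Theorem 2.4 applied to the Cartesian power $A^{\sharp}\times\cdots\times A^{\sharp}$, the set $Y$ is a finite Boolean combination of cosets of definable subgroups $H_{1},\dots,H_{m}$ of $G$.

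Finally I would translate this back into algebraic geometry by taking Zariski closures. The Zariski closure $\overline{H_{i}}$ of each definable subgroup is an algebraic subgroup of $A^{n}$, and the closure of a coset $a_{i}+H_{i}$ is the translate $a_{i}+\overline{H_{i}}$. Writing $Y$ in disjunctive normal form, $Y\subseteq\bigcup_{i}(a_{i}+H_{i})$, so $X=\overline{Y}\subseteq\bigcup_{i}(a_{i}+\overline{H_{i}})$; since $X$ is irreducible it lies in a single translate $a_{i_{0}}+\overline{H_{i_{0}}}$ of the same dimension, and passing to the identity component makes $B:=\overline{H_{i_{0}}}^{0}$ an abelian subvariety with $X=a'+B$ for a suitable $a'$ (the subtracted cosets in the Boolean combination come from proper, hence lower-dimensional, subgroups $K_{ij}\subsetneq H_{i_{0}}$, so deleting them does not shrink the dominant closure). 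This is the required conclusion. The main obstacle, and the real content beyond the cited theorems, is the very first step: arranging the differential structure so that no new constants appear, so that the ${\mathcal C}$-trace of $A$ remains $0$ and Theorem 2.4 becomes applicable; once modularity is in hand, the passage from ``Boolean combination of cosets of definable subgroups'' to ``translate of an abelian subvariety'' is a routine matter of taking Zariski closures and invoking irreducibility.
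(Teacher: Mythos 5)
Your overall strategy is the same as the paper's: put a derivation on $K$ with constant field $k$, invoke Theorems 2.3(iv) and 2.4 to get modularity of $A^{\sharp}$, intersect $X$ with $(A^{\sharp})^{n}$ (which contains the torsion of $A^{n}$), and finish by taking Zariski closures and using irreducibility of $X$. The final part of your argument (finite unions of cosets, dimension count, identity component) is adequate and corresponds to the ``little arguments'' the paper alludes to.

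However, there is a genuine error at precisely the step you yourself identify as the crux. You claim that, since $k$ is algebraically closed, ``the differential closure adds no new constants, so the constant field $\mathcal{C}$ of $\mathcal{U}$ is again $k$.'' The first half is correct, but the conclusion is false: you chose $\mathcal{U}$ to be a \emph{saturated} model of $DCF_{0}$, and in any such model the constants form a saturated algebraically closed field of the same large cardinality as $\mathcal{U}$, so $\mathcal{C}$ properly contains $k$ no matter how the derivation on $K$ was chosen; a saturated model is never the differential closure of $(K,\partial)$. Consequently the hypothesis of case (iv) of Theorem 2.3 --- that $A$ is not isomorphic to an abelian variety defined over $\mathcal{C}$ --- does not follow immediately from the $k$-trace $0$ assumption, and your proof as written never establishes it. The standard repair, which is in effect what the paper's phrase ``equip $K$ with the structure of a differentially closed field whose field of constants is $k$'' accomplishes, is to pass from $(K,\partial)$ to its differential closure $K^{diff}$, whose constants are exactly $k$ (this is where the algebraic closedness of $k$ is used). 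One can then either apply Theorem 2.4 inside $K^{diff}$, noting that the modularity statement transfers between a saturated model and an elementary submodel containing the relevant parameters, or, if one insists on working in a saturated $\mathcal{U}\succ K^{diff}$, observe that ``$A$ is isomorphic to an abelian variety defined over the constants'' is a countable disjunction of first-order conditions over the parameters of $A$ (abelian varieties of fixed numerical data, and isomorphisms between them, come in definable families), so by $K^{diff}\prec\mathcal{U}$ such an isomorphism in $\mathcal{U}$ would descend to one over $C_{K^{diff}}=k$, contradicting $k$-trace $0$. Without some such descent argument, the bridge from the theorem's hypothesis to the hypothesis of Theorem 2.3(iv) is missing.
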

\begin{proof}  After extending $K$ equip $K$ with the structure of a differentiall closed field $(K,\partial)$ whose field of constants is $k$.  Then by  assumption and 2.4, $A^{\sharp}$ is modular. Hence $X\cap (A^{\sharp})^{n}$ is a
 finite union of cosets of subgroups. (A little argument is needed here to show that a Kolchin closed set which is a Boolean combination of cosets of definable subgroups is actually a finite union of such cosets.)
As $(A^{\sharp})^{n}$ contains the torsion of $A^{n}$, by our assumptions $X\cap (A^{\sharp})^{n}$ is Zariski-dense in $X$ from which it follows that $X$ is itself a translate of a subgroup (abelian subvariety) of $A^{n}$. (Again a little argument is required to show that the Zariski closure of a finite union of cosets is a finite union of cosets of algebraic subgroups, and in the case or irreducibility is a single such coset.) 

\end{proof}
Of course it takes considerable insight to see the connection between  Mordell-Lang type problems and the model theory of differentially closed fields, although Buium \cite{Buium} had already made the connection with differential algebra.
Nevertheless it is simply the fact that  model-theoretic and definability considerations are even {\em relevant} to or meaningful for algebraic and diophantine geometric problems that is striking for me, and is an instantiation of the ``unity of mathematics".  Hopefully other parts of this paper will serve a similar purpose.

\section{Stability, stable group theory and generalizations}
The general theory of stable groups is due to Poizat (building on Cherlin, Shelah, Zilber in special cases). 
It appears in \cite{Poizat-book} but the original paper is \cite{Poizat-generics}.

Stable group theory is stability theory in the presence of a transitive definable group action, a special case of which is simply a definable group.  A very special case is when the ambient theory is $ACF$ (or rather one of its completions, $ACF_{0}$ or $ACF_{p}$).  
In this case the definable groups are algebraic groups as mentioned in Section 2.1. And much of the language of stable group theory, such as connected components, is borrowed from algebraic group theory. 
Also there is a rather important aspect of stability theory called ``local stability" where an assumption of stability is made only on  a single formula $\phi(x,y)$ rather than all formulas.  This also has a group version, local stable group theory, appearing in \cite{Hrushovski-Pillay-groupsinlocalfields}. 

This is the appropriate point to introduce, or recall, the key notions of dividing and forking (by Shelah). Let $T$ be an arbitrary complete theory for now, and work in a saturated model $\bar M$ of $T$. We stick with the conventions in the introduction. 
A formula $\phi(x,b)$ is said to {\em divide} over a set $A$ of parameters, if there is some $A$-indiscernible sequence $(b_{i}:i<\omega)$ with $b_{0} = b$ such that  $\{\phi(x,b_{i}): i <\omega\}$ is inconsistent, equivalently $k$-inconsistent for some $k<\omega$.  The general idea is that this notion of dividing captures model-theoretically  notions of ``dependence" from both field theory and linear algebra.  The notion of forking was introduced to show that the corresponding notion of {\em independence} has a kind of ``built-in" extension property: a formula $\phi(x,b)$ is said to {\em fork} over $A$ if $\phi(x,b)$ implies a finite disjunction of formulas each of which divides over $A$.  What is quite amazing is how many phenomena are captured by this apparently obscure notion. 

Anyway we will say that a tuple $a$ is $f$-independent (where $f$ is for forking) from a set $B$ of parameters over a subset $A$ of $B$, if  {\em no} formula $\phi(x,b)$ in the type of $a$ over $B$ forks over $A$.  

When $T$ is stable, this independence notion has some properties which have an ``algebraic" character, together with a certain {\em uniqueness} property.  The algebraic properties include things such as symmetry ($a$ is $f$-independent from $B$ over $A$ iff for each tuple $b$ from $B$, $b$ is $f$- independent from $A\cup\{a\}$ over $A$), and extension, or existence  (for any $a$, and $A\subseteq B$, there is $a'$ with the same type as $a$ over $A$ such that $a$ is $f$-independent from $B$ over $A$). 
For some time, many model-theorists, at least I, had the impression that it was these algebraic properties of $f$-independence in stable theories that were crucial for, or even characteristic of, stability. But it is now clear that it is the uniqueness aspect which is characteristic. This uniqueness property says that working over a base set $A$ which is algebraically closed in the sense of ${\bar M}^{eq}$ (for example if $A$ is an elementary substructure of ${\bar M}$),
given $a$ and $A\subseteq B$, there is a {\em unique} type of $a'$ over $B$ such that $a$ and $a'$ have the same type over $A$ and $a'$ is $f$-independent from $B$ over $A$.   Another way of expressing this is if $p(x)$ is a complete type over $A$, and $I_{p}$ is the collection of formulas $\phi(x,b)$ with arbitrary parameters from ${\bar M}$ such that $p(x)\cup\{\phi(x,b)\}$ forks over $A$, then $p(x)\cup\{\neg\phi(x,b): \phi(x,b)\in I_{p}\}$ determines or axiomatizes a unique complete type $p'(x)$ over ${\bar M}$.  This is a vast stability-theoretic generalization of the fact that if $K$ is an algebraically closed field and $V$ is an algebraic variety over $K$ which is $K$-irreducible, then $V$ is absolutely irreducible. 

When the base set $A$ of parameters is a model (elementary substructure of ${\bar M}$), we have  the following:
\begin{Fact} Suppose $T$ is stable and $M$ is a model and $B\supseteq M$ and $p(x)\in S_{x}(B)$. Then the following are equivalent:
\newline
(i) $p$ does not fork over $M$,
\newline
(ii) $p$ is definable over $M$,
\newline
(iii) $p$ is finitely satisfiable in $M$.
\newline
Moreover $p$ is the unique ``nonforking extension" of its restriction $p|M$ to $M$. 
\end{Fact}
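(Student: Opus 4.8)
The engine of the whole statement is the \emph{definability of types} theorem for stable theories: if $T$ is stable then every complete type over every set is definable, and over a model the defining scheme can be taken with parameters in that model (the residual $\operatorname{acl}^{eq}$ parameters being eliminated by homogeneity of $M$ and the finite equivalence relation theorem). The plan is to prove the two ``invariance'' implications $(ii)\Rightarrow(i)$ and $(iii)\Rightarrow(i)$ directly, and then to close the cycle with $(i)\Rightarrow(ii)\wedge(iii)$ by feeding definability of types into the uniqueness of nonforking extensions over a model recalled above. Throughout I would pass freely between $p\in S_x(B)$ and global extensions in $S_x(\bar M)$, using that a restriction of a definable type is definable by the same scheme, a restriction of a finitely satisfiable type is finitely satisfiable, and a restriction of a nonforking type is nonforking.

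For $(ii)\Rightarrow(i)$ and $(iii)\Rightarrow(i)$ I would first reduce to a global type: a type definable over $M$ extends, by the same scheme, to a global type definable over $M$, while a type finitely satisfiable in $M$ extends, via an ultrafilter on $M^{|x|}$ (a coheir/average type), to a global type finitely satisfiable in $M$. In either case the resulting global type $q$ is $\operatorname{Aut}(\bar M/M)$-invariant: in the definable case because the defining formulas have parameters only in $M$, and in the finitely satisfiable case because an automorphism fixing $M$ permutes the witnesses lying in $M$. I then observe that an invariant \emph{complete} type has no dividing formula over $M$: if $\phi(x,b)\in q$ and $(b_i)_{i<\omega}$ is $M$-indiscernible with $b_0=b$, then each $b_i$ has the same type over $M$ as $b$, so some $\sigma\in\operatorname{Aut}(\bar M/M)$ carries $\phi(x,b)$ to $\phi(x,b_i)$, whence $\phi(x,b_i)\in q$ for all $i$ and $\{\phi(x,b_i):i<\omega\}$ is consistent. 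Completeness then upgrades non-dividing to non-forking: were some $\phi(x,b)\in q$ to fork over $M$ it would imply a finite disjunction of formulas each dividing over $M$, and by completeness $q$ would contain one disjunct, contradicting the previous point. Restricting $q$ to $B$ yields $(i)$.

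For $(i)\Rightarrow(ii)\wedge(iii)$ and the ``moreover'', set $p_0:=p|M$ and compare two global extensions of $p_0$. Every complete type has a global coheir, so there is $q_0\in S_x(\bar M)$ extending $p_0$ and finitely satisfiable in $M$; by $(iii)\Rightarrow(i)$ it does not fork over $M$. On the other hand, by definability of types $p_0$ is definable over $M$ by a scheme $(d_{p_0}\phi)_\phi$ with $d_{p_0}\phi\in L(M)$, and declaring $\phi(x,c)\in\tilde p$ iff ${}\models d_{p_0}\phi(c)$ for all $c\in\bar M$ defines a global type $\tilde p$ extending $p_0$ (consistency and completeness of the scheme being exactly the content of the definability theorem); by $(ii)\Rightarrow(i)$ it too does not fork over $M$. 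Since $M$ is a model, the uniqueness of nonforking global extensions recalled above forces $q_0=\tilde p=:P$, so the single global type $P$ is simultaneously definable over $M$, finitely satisfiable in $M$, and the unique nonforking global extension of $p_0$. Finally, as $p$ does not fork over $M$ I would extend it (extension property) to a global $p'\supseteq p$ nonforking over $M$; then $p'|M=p_0$ forces $p'=P$ by uniqueness, whence $p=P|B$ is definable over $M$ and finitely satisfiable in $M$, and $P$ is precisely the unique nonforking extension claimed in the ``moreover''.

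The main obstacle is the definability-of-types input, and inside it the verification that the $M$-scheme $(d_{p_0}\phi)_\phi$ actually defines a consistent, complete global type and that the definitions can be chosen over $M$ rather than merely over $\operatorname{acl}^{eq}(M)$; this is the one genuinely stability-theoretic step, resting on the absence of the half-graph (local ranks and the finite equivalence relation theorem). Once that is in hand, the invariance implications of the second paragraph and the uniqueness bookkeeping of the third are routine, and the elegant feature of the argument is that all three conditions collapse onto the single global type $P$.
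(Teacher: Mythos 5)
The paper itself offers no proof of this statement: it is Fact 3.1, quoted as classical background, with the relevant uniqueness property of forking in stable theories recalled in the paragraph immediately before it. So your proposal must stand on its own, and most of it does. The implications (ii)$\Rightarrow$(i) and (iii)$\Rightarrow$(i) are correct and theory-agnostic: a global type definable over $M$ or finitely satisfiable in $M$ is $\mathrm{Aut}(\bar{M}/M)$-invariant, an invariant global type contains no formula dividing over $M$ (your indiscernible-sequence argument), and completeness upgrades non-dividing to non-forking. The construction of the canonical global extension of an $M$-definable type is also right, though its consistency and completeness are not ``exactly the content of the definability theorem'' -- they need the separate elementarity argument $M\prec\bar{M}$, which you do at least flag as requiring verification.

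The genuine gap is in the third paragraph and in your closing claim that definability of types is ``the one genuinely stability-theoretic step''. Your route from (i) to (ii)$\wedge$(iii), and your treatment of the ``moreover'', both invoke ``the uniqueness of nonforking global extensions over a model'', i.e.\ stationarity. But stationarity \emph{is} the ``moreover'' clause (the global version and the version for extensions to $B$ are interchangeable via the extension property you also use), so as a self-contained argument this is circular: the moreover is derived from the moreover. Nor can stationarity be extracted from what you actually prove: your arguments show that coheirs and canonical definable extensions are nonforking, not that every nonforking extension is of this form, and invariance considerations alone can never yield uniqueness -- in $DLO$, the type over a model $M$ of an element above $M$ has two distinct $M$-invariant (hence, by your own argument, nonforking) global extensions. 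Every standard proof of stationarity requires a second stability-theoretic input: either that a global nonforking extension is definable over $acl^{eq}$ of the base (essentially the direction (i)$\Rightarrow$(ii) you are trying to establish, obtained via local $\phi$-ranks), or the finite equivalence relation theorem together with symmetry of forking. Within this survey your derivation is defensible, since Pillay recalls the uniqueness property as known just before the Fact; but then you must present stationarity as a second assumed pillar on a par with definability of types, and concede that the ``moreover'' is being quoted rather than proven. If you want an honest proof, the missing ingredient is a direct argument that nonforking implies definable over $M$; uniqueness then follows because two global types definable over the model $M$ with the same restriction to $M$ coincide by elementarity, and your bookkeeping closes the cycle.
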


Let us give some explanations of (ii) and (iii). The notion of a definable type is central to model theory, and may have originated in work of Gaifman on models of Peano arithmetic.  Anyway given a complete type $p(x)$ over a set $B$ and a subset $A$ of $B$, $p$ is said to be definable (over $A$) if for each formula $\phi(x,y)$ of the underlying language, there is a formula $\psi_{\phi}(y)$ over $A$ (i.e. with parameters from $A$) such that for each tuple $b$ from B, $\phi(x,b)\in p$ iff ${\bar M}\models \psi_{\phi}(b)$.  In the case, as in  Fact 3.1 above, that $A$ is a model $M$ (elementary substructure of $\bar M$), then the ``defining schema"  which takes $\phi$ to $\psi_{\phi}$ yields a complete type over any set $B$ extending $M$, which coincides with the unique nonforking extension of $p$ over $B$.  On the other hand we say that $p\in S_{x}(B)$ is finitely satisfiable in $M$ if every formula $\phi(x)$ in $p$ (which will of course have parameters in $B$, not necessarily in $M$) has a solution, or realization, in $M$. 

\subsection{Stable groups}
Stable group theory is  an adaptation of the above machinery to the context of definable homogeneous spaces (a transitive definable action of a definable group $G$ on a definable set $X$), although we will focus here on the case where $X = G$.
As mentioned in several places earlier, by a stable group we usually mean a group $G$ definable in a model of stable theory. To understand the model theory we often assume that the ambient model is saturated. It is basically equivalent to think of a stable group, as a $1$-sorted $L$-structure $M$ (some language $L$) such that the basic or home sort is a group, where the group operation is the interpretation of a symbol in the language $L$, and the theory of $M$ is stable.
And sometimes one considers the language $L$ to consist only of the group operation (and maybe inversion, and the identity element).  One can also think of a stable group syntactically (relative to a given theory) namely as a pair of formulas, one defining the universe of the group, and the other defining the group operation. 

There are several  aspects to stable group theory; including, chain conditions, stabilizers, generic types, and connected components.

\begin{Proposition} Let $G$ be a stable group. Let $\phi(x,y)$ be an $L$-formula where $x$ ranges over $G$, and such that for each $b$, $\phi(x,b)$ defines a subgroup of $G$. Then (working in a saturated model) any intersection of subgroups of $G$ defined by instances $\phi(x,b)$ of $\phi(x,y)$ is a finite sub-intersection.
\end{Proposition}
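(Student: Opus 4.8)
The plan is to reduce the statement to a single uniform finiteness bound, and then to extract that bound from the stability of $\phi$ by exploiting the group structure. Write $H_b := \phi(G,b) = \{g \in G : \models \phi(g,b)\}$, so each $H_b$ is a subgroup, and call a finite intersection $H_{b_1}\cap\cdots\cap H_{b_n}$ \emph{irredundant} if dropping any single factor strictly enlarges it. I first observe that it suffices to find an integer $N$ bounding the length of every irredundant intersection. Indeed, such a bound forbids strictly decreasing chains of finite sub-intersections of length exceeding $N$, so for any family $\{H_{b_i} : i \in I\}$ there is a finite $J_0 \subseteq I$ with $H^* := \bigcap_{i\in J_0} H_{b_i}$ minimal among all finite sub-intersections. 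For each $i \in I$ the set $H^* \cap H_{b_i}$ is again a finite sub-intersection, so minimality forces $H^* \subseteq H_{b_i}$; hence $H^* = \bigcap_{i\in I} H_{b_i}$, which is the desired finite sub-intersection.

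To produce $N$, suppose toward a contradiction that $H_{b_1}\cap\cdots\cap H_{b_n}$ is irredundant with $n$ arbitrarily large. Irredundancy gives, for each $i \leq n$, an element $a_i \in \bigcap_{j \neq i} H_{b_j}$ with $a_i \notin H_{b_i}$; equivalently $\models \phi(a_i, b_j)$ iff $i \neq j$. The key step uses the group operation to upgrade this ``diagonal'' pattern into a genuine order pattern. For $S \subseteq \{1,\ldots,n\}$ set $g_S := \prod_{i \in S} a_i$, the product taken in increasing order of index. I claim that $g_S \in H_{b_j}$ iff $j \notin S$. If $j \notin S$, every factor $a_i$ lies in $H_{b_j}$, so $g_S \in H_{b_j}$. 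If $j \in S$, write $g_S = u\, a_j\, w$ with $u$ the product of the factors of index $< j$ and $w$ the product of those of index $> j$; then $u, w \in H_{b_j}$, and since $H_{b_j}$ is a subgroup, $g_S \in H_{b_j}$ would give $a_j = u^{-1} g_S w^{-1} \in H_{b_j}$, a contradiction.

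Now take the nested sets $S_i := \{1,\ldots,i\}$ for $0 \leq i \leq n$ and put $c_i := g_{S_i}$. By the claim, $\models \phi(c_i, b_j)$ iff $j \notin S_i$ iff $i < j$. Thus $\phi(x,y)$ admits half-graphs of length $n+1$, and letting $n$ grow contradicts the stability of $T$ (equivalently, $k$-stability of $\phi$ for some fixed $k$). Hence every irredundant intersection has length at most a fixed $N$, essentially the half-graph bound $k$ of $\phi$, which by the first paragraph completes the argument.

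I would expect the product computation in the claim to be the crux of the proof. The plain diagonal configuration $\phi(a_i,b_j) \iff i \neq j$ is by itself harmless for a stable formula; it is precisely the passage through the products $g_S$, available only because the $H_b$ are subgroups, that converts it into a forbidden order property. The remaining ingredients, namely the reduction to irredundant intersections and the extraction of a minimal finite sub-intersection, are routine once the bound $N$ is in hand.
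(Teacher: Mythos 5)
Your second and third paragraphs -- the product trick that converts an irredundant intersection into a half-graph -- are correct, and they are exactly the heart of the classical Baldwin--Saxl argument that this proposition rests on (the paper states the proposition without proof). The gap is in your first paragraph: the implication ``a bound $N$ on the length of irredundant intersections forbids strictly decreasing chains of finite sub-intersections of length exceeding $N$'' is false, and with it the existence of your minimal $H^*$ collapses. Indeed, for any strictly decreasing chain of subgroups $H_{b_1} \supsetneq H_{b_2} \supsetneq \cdots$, every irredundant intersection consists of a \emph{single} factor (any finite sub-intersection equals the factor of largest index), so $N=1$ works, yet there are arbitrarily long strictly decreasing chains of finite sub-intersections, and the full intersection is not equal to any finite sub-intersection. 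This situation is not excluded by the hypotheses as you use them: in a saturated model of $Th(\Q_{p})$ (an $NIP$, unstable theory) the subgroups of the additive group defined by $v(x)\geq v(b)$ form a uniformly definable family which is a chain, so your combinatorial bound holds with $N=1$, while the intersection of all of them is a proper type-definable subgroup that is no finite sub-intersection. Since your proof invokes stability only once (to produce $N$) and treats everything else as pure combinatorics, it would ``prove'' the proposition for this family as well; so the quoted implication cannot be salvaged.

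The repair is a second application of stability, to kill infinite descending chains. Suppose no finite sub-intersection equals $\bigcap_{i\in I}H_{b_i}$. Since every finite sub-intersection then \emph{strictly} contains the full intersection, one can build by recursion a chain $K_1 \supsetneq K_2 \supsetneq \cdots$ of finite sub-intersections, each obtained from the previous one by intersecting with a single further $H_{b_i}$ that fails to contain it. By your (correct) finite bound, each $K_m$ equals an intersection of at most $N$ of the $H_b$'s, hence is an instance $\psi(x,\bar c_m)$ of the \emph{fixed} formula $\psi(x;y_1,\ldots,y_N) := \phi(x,y_1)\wedge\cdots\wedge\phi(x,y_N)$ (padding with repeated factors). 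Choosing $x_m \in K_m \setminus K_{m+1}$ gives, since the chain decreases, $\models \psi(x_m,\bar c_l)$ if and only if $l \leq m$: this is the order property for $\psi$, contradicting stability of $T$. With your first paragraph replaced by this argument (the extraction of a minimal finite sub-intersection is then no longer needed: the recursion simply cannot run forever, so some finite sub-intersection is already contained in every $H_{b_i}$), the remainder of your proof is correct and complete.
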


When $G$ is $t.t$ in the sense that the Morley rank of the formula defining $G$ is an ordinal (rather than $\infty$), then we have the DCC on the family of {\em all} definable subgroups of $G$.

When $G$ is superstable we have a weaker descending chain condition: there is no infinite chain $G_{1}\supset G_{2} \supset ... $ of definable subgroups such that $G_{i+1}$ has infinite index in $G_{i}$. 

Given a stable group $G$ and model $M$, $G(M)$ acts on the space $S_{G}(M)$ of complete types over $M$ containing the formula defining $G$.  The action is:  $gp = tp(ga/M)$ where $a$ realizes $p$. 

It follows from definability of types and Proposition 3.2 that:

\begin{Proposition} Let $G$ be a stable group, $M$ a model, and $p(x)\in S_{G}(M)$  (complete types over $M$ concentrating on $G$), then $Stab(p) =_{def} \{g\in G(M): gp = p\}$  is an intersection at most $|T|$ many   definable subgroups of $G(M)$.

When $G$ is $t.t$, $Stab(p)$ is outright definable. 
\end{Proposition}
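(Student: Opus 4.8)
The plan is to exhibit, for each $L$-formula $\phi(x,y)$ (with $x$ ranging over $G$), a definable subgroup $\mathrm{Stab}_{\phi}(p)$ of $G$ such that $\mathrm{Stab}(p)=\bigcap_{\phi}\mathrm{Stab}_{\phi}(p)$; since there are at most $|T|$ formulas $\phi$ up to $T$-equivalence, this yields the asserted presentation. The only input beyond elementary group-action bookkeeping is that $p$, being a complete type over the model $M$ in a stable theory, is definable over $M$ (apply Fact 3.1 with $B=M$, since $p$ does not fork over itself). Write $d_p\phi(y)$ for the $\phi$-definition of $p$, and more generally, for the formula $\chi(x,z,y):=\phi(z\cdot x,y)$, write $d_p\chi(z,y)$ for its definition, an $L_M$-formula; both exist by definability of $p$.

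First I would record the translation identity: for $a\models p$ and any $h$, one has $\phi(x,b)\in hp$ iff $\models\phi(ha,b)$ iff $\phi(hx,b)\in p$ iff $\models d_p\chi(h,b)$. Writing $q\equiv_\phi q'$ to mean that $q,q'$ contain exactly the same formulas $\phi(x,b)$, I would then set
$$\mathrm{Stab}_\phi(p)\ :=\ \{\, g\in G : hp\equiv_\phi hgp \text{ for all } h\in G\,\}.$$
Definability is immediate from the identity: the condition reads $\forall h\,\forall b\,[\,d_p\chi(h,b)\leftrightarrow d_p\chi(hg,b)\,]$, an $L_M$-formula in $g$. The point of quantifying over all translates $h$ is that it makes $\mathrm{Stab}_\phi(p)$ a subgroup: membership of $e$ is trivial; closure under products follows by applying the defining condition for $g_1$ at $h$ and for $g_2$ at $hg_1$ and using transitivity of $\equiv_\phi$; and closure under inverses follows by applying the condition for $g$ at $hg^{-1}$.

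Next I would verify $\mathrm{Stab}(p)=\bigcap_\phi\mathrm{Stab}_\phi(p)$. If $g$ lies in every $\mathrm{Stab}_\phi(p)$, then taking $h=e$ gives $p\equiv_\phi gp$ for all $\phi$, i.e. $gp=p$. Conversely if $gp=p$ then $hgp=hp$ for every $h$, so trivially $hp\equiv_\phi hgp$, whence $g\in\mathrm{Stab}_\phi(p)$ for all $\phi$. This presents $\mathrm{Stab}(p)$ as an intersection of at most $|T|$ definable subgroups. For the final clause, when $G$ is $t.t.$ there is the DCC on the family of all definable subgroups of $G$; applied to $\bigcap_\phi\mathrm{Stab}_\phi(p)$ it shows the intersection already equals a finite sub-intersection $\mathrm{Stab}_{\phi_1}(p)\cap\cdots\cap\mathrm{Stab}_{\phi_n}(p)$, hence is outright definable. (Proposition 3.2 is the uniform single-formula version of this chain condition, and is what guarantees definability of the auxiliary subgroups in the purely stable case if one prefers to build $\mathrm{Stab}_\phi(p)$ as an intersection of instance-subgroups rather than directly from the defining schema.)

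The step I expect to be delicate is the balance, in the definition of $\mathrm{Stab}_\phi(p)$, between being a subgroup and being definable. The naive candidate $\{g : p\equiv_\phi gp\}$ is definable but need not be closed under the group operation, since left translation by $g$ does not preserve agreement on $\phi$-formulas (it converts $\phi$ into $\phi(g\,\cdot\,,-)$). Quantifying over all translates $h$ repairs the subgroup property while, thanks to definability of $p$, keeping the set definable; getting this formulation right is the crux, and everything else is bookkeeping with the action $gp=tp(ga/M)$.
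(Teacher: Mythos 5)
Your proof is correct, and it isolates the right crux: the naive candidate $\{g : p\equiv_\phi gp\}$ is definable but need not be a subgroup, and quantifying over all translates $h$ repairs the subgroup property while definability of $p$ keeps the condition first-order.

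Where you part ways with the paper: the paper derives the proposition from definability of types \emph{together with} Proposition 3.2 (the Baldwin--Saxl chain condition). The construction behind that citation realizes $\mathrm{Stab}_\phi(p)$ as the intersection, over all parameters $b$ from $M$, of the right-translation stabilizers of the definable sets $X_{\phi(x,b)}=\{h\in G(M):\phi(x,b)\in hp\}$; these stabilizers form a family of subgroups uniformly defined by the instances of a single formula $\psi(g;b)$, and Proposition 3.2 is what collapses the a priori $|M|$-sized intersection over $b$ to a finite sub-intersection, hence a definable subgroup, after which one intersects over the at most $|T|$ formulas $\phi$ exactly as you do. You avoid the chain condition altogether by binding both $h$ and $b$ inside one $L_M$-formula, so each $\mathrm{Stab}_\phi(p)$ is outright definable from the defining schema alone; the only chain condition in your argument is the DCC on all definable subgroups, used only for the t.t.\ clause (where the paper needs it too). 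Your route is thus more elementary and self-contained. What the paper's route buys is a sharper form of the conclusion: $\mathrm{Stab}_\phi(p)$ comes out as a \emph{finite intersection of instances of a single formula}, i.e.\ it stays inside a uniformly definable family attached to $\phi$, which is the shape needed elsewhere in stable group theory --- for the components $G_\phi^{0}$ defined just before Theorem 3.6, and for local stable group theory, where one must remain among $\phi$-formulas rather than arbitrary definable conditions with extra quantifiers. For the statement as literally given, both arguments are complete, and you flag the alternative yourself in your closing parenthesis.
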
 

In the $t.t$ case (for example when $T$ is $ACF_{0}$ and $G$ is just an algebraic group), the above results generalize the DCC on algebraic subgroups of $G$ and the fact that for any subvariety $X$ of $G$, $Stab(X) = \{g\in G: g(X) = X\}$ is an algebraic subgroup of $G$.    Another notion from algebraic group theory is ``connected component" (of the identity): given an algebraic group $G$, one can write $G$ as a disjoint finite union of connected (in fact irreducible) sets, each clopen in the Zariski topology, and $G^{0}$ denotes that connected component which contains the identity. It is a normal subgroup of $G$ of finite index (and the other components are just translates of $G^{0}$). 
This extends to the general case when $G$ is $t.t$ (although there is no Zariski topology on $G$): There is a smallest definable subgroup of $G$ of finite index, which we call again $G^{0}$, the connected component of $G$ (strictly speaking of the identity in $G$).

In the general stable case, one should be more careful. For a given formula $\phi(x,y)$ all of whose instances define subgroups, by Proposition 3.2 there is a smallest subgroup of $G$ of finite index and which is definable by a conjunction of instances of $\phi(x,y)$. We can call this $G_{\phi}^{0}$ (if there is no ambiguity). And by $G^{0}$ we mean the intersection of all the $G_{\phi}^{0}$, as $\phi$ varies.  $G^{0}$ is a normal subgroup of $G$.  The $G_{\phi}^{0}$ are all $\emptyset$-definable, so their intersection is $\emptyset$-type-definable. 
We will in general only ``see" $G^{0}$ and the quotient $G/G^{0}$ in a saturated model, and this quotient naturally has the structure of a {\em profinite group} (inverse limit of finite groups). 

\begin{Example} When $G$ is $(\Z,+)$  (as both a group and a structure), then $G^{0}$ is the intersection of the subgroups  $n\Z$, which we only really see in a saturated elementary extension.
\end{Example}

So (working in a saturated model) $G/G^{0}$ is the inverse limit of the $\Z/n\Z$, namely $\hat \Z$ the so-called profinite completion of $\Z$.  $(\Z,+)$ is the canonical superstable, non totally transcendental (t.t.)  group. 

We now discuss what I like to call the fundamental theorem of stable group theory  (FTSGT), which combines stabilizers, connected components, and  ``genericity".  A natural notion of a ``large" set $X$  in a group $G$ is one such that finitely many (left) translates of $X$ cover $G$.  In the topological dynamics literature this kind of set is called syndetic.
When $G$ is a definable group and $X$ a definable subset we (in model theory) call $X$ (left) generic. 
Fix now a stable group and a model $M$. 
\begin{Lemma} The collection of definable over $M$ subsets of $G$ which are non (left) generic, is  a proper ideal in the Boolean algebra of $M$-definable subsets of $G$.
\end{Lemma}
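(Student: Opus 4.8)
The plan is to verify the ideal axioms one at a time, isolating the single place where stability does real work. Write $I$ for the collection of non-generic $M$-definable subsets of $G$. Three of the four requirements are immediate, using only that $G$ is infinite together with the translation-invariance of genericity: if $g_1X,\dots,g_nX$ cover $G$ then for any $h$ the translates $g_1hX,\dots,g_nhX$ cover $G$, so $X$ is generic iff any (equivalently every) translate $hX$ is. From this: the empty set is non-generic, since finitely many translates of $\emptyset$ remain empty and cannot cover the nonempty $G$, so $0\in I$; the whole group $G$ is covered by the single translate $G$, hence generic, so $G\notin I$ and $I$ is proper; and if $A\subseteq B$ with $A$ generic, the translates of $A$ witnessing this are contained in the corresponding translates of $B$, so $B$ is generic, which gives downward closure of $I$.

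The substantive point is closure under finite unions, i.e. that $A,B$ non-generic implies $A\cup B$ non-generic. By translation-invariance this is equivalent to the clean statement $(\ast)$: \emph{no finite family of translates of finitely many non-generic definable sets covers $G$}. Indeed, were $A\cup B$ generic, say $G=\bigcup_{i\le n}g_i(A\cup B)=\bigcup_{i\le n}g_iA\cup\bigcup_{i\le n}g_iB$, then $G$ would be covered by finitely many translates of the non-generic sets $A$ and $B$, contradicting $(\ast)$. So it suffices to prove $(\ast)$.

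To prove $(\ast)$ I would argue by contradiction and extract an instance of the order property, contradicting the stability of $T$. Suppose $G=\bigcup_{i\le n}h_iC_i$ with each $C_i$ non-generic, and choose such a covering with $n$ minimal. Minimality supplies, for each $i$, a point $p_i\in h_iC_i$ lying in no other $h_jC_j$; non-genericity of each $C_i$ supplies, for any finite set of translates of $C_i$, a point of $G$ avoiding all of them. Using these two features alternately, one builds by recursion group elements $(a_s)$ and $(b_t)$ together with the fixed formulas $\phi_i(x;y):=(y^{-1}x\in C_i)$, whose instances define exactly the translates of the $C_i$, so that membership of the chosen points in the chosen translates alternates without bound. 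Passing to a subsequence on which a single index $i$ recurs (there are only finitely many $C_i$), one exhibits for that fixed $\phi_i$ an infinite half-graph pattern $\phi_i(a_s;b_t)\iff s\le t$, contradicting the $k$-stability of $\phi_i$ guaranteed by stability of $T$.

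The main obstacle is exactly this last recursive construction: organizing the bookkeeping so that the non-genericity of the base sets is converted into a genuinely \emph{infinite} alternation for one formula, rather than a merely long finite one. This is the sole step where stability is invoked, and it is the stability-theoretic heart underlying the fundamental theorem of stable group theory; everything preceding it is formal.
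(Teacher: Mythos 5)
Your formal reductions are correct: that $\emptyset$ is non-generic, that $G$ itself is generic (properness), that non-genericity is closed downwards, and that closure under finite unions is equivalent to your statement $(\ast)$ are all routine, and you are right that this is where no stability is needed. (One small slip: if $\bigcup_i g_iX=G$, the witnesses for genericity of $hX$ are the translates $(g_ih^{-1})(hX)$, not $g_ihX$.) But the entire mathematical content of the lemma is $(\ast)$ -- that a union of two non-generic definable sets is non-generic -- and there you have not given a proof. You describe a plan (minimal cover, points lying in a unique piece, a recursion producing an alternating pattern, pigeonhole to fix one index, half-graph contradiction) and then explicitly concede that ``organizing the bookkeeping'' of the recursion is ``the main obstacle.'' That bookkeeping \emph{is} the theorem; as written, the proposal restates the difficulty rather than resolving it.

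Concretely, here is why the sketch does not go through as described. To contradict $k$-stability of the single formula $\phi_i(x;y)\equiv(y^{-1}x\in C_i)$ you must produce $a_s$ and $b_t$ with $\phi_i(a_s;b_t)$ holding iff $s\le t$: infinitely many positive \emph{and} negative instances, coherently ordered. Non-genericity of $C_i$ supplies only negative instances (a point avoiding any prescribed finite set of translates of $C_i$), while the covering supplies positive instances only for a translate of \emph{some} $C_j$, with $j$ outside your control; pigeonholing a recurrent index after the fact destroys the ordering you need, and neither the minimality of $n$ nor the points lying in a unique piece visibly creates the alternation. Every actual proof supplies this missing coherence by a genuine further input, which your write-up does not contain. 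The standard route (Poizat; Chapter 1 of Pillay's \emph{Geometric Stability Theory}) goes through forking: one first constructs a type $p$ all of whose translates do not fork over the base, shows using symmetry of forking that every formula in such a type is generic, and then uses the translation trick -- if $G=\bigcup_i g_iX$ then some $g_iX\in p$, so $X\in g_i^{-1}p$, a generic type -- after which the ideal property is a one-line consequence. Alternatives are local $\phi$-ranks satisfying $R_\phi(A\cup B)=\max(R_\phi(A),R_\phi(B))$, or Newelski's dynamical route, in which the partition regularity of piecewise syndetic sets (a fact about arbitrary groups) is combined with a stability argument that definable piecewise syndetic sets are generic; that last step is the one place where an order-property extraction of the kind you envisage can genuinely be carried out. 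Note that the paper itself states this lemma without proof, as part of Poizat's fundamental theorem of stable group theory; any complete argument needs one of the inputs above, and your proposal currently has none of them.
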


Hence there are types $p\in S_{G}(M)$ which avoid this ideal, namely such that all formulas in $p$ are generic, and we call such $p$ a (left) generic type of $G$ over $M$.  Notice that the collection of generic types is closed in $S_{G}(M)$ (by its definition).  Now any $p(x)\in S_{G}(M)$ determines a coset of $G^{0}$ (as it determines a coset of each $G_{\phi}^{0}$). So we have a natural map $\iota$ from the collection $Gen$ of generic types to $G/G^{0}$.

\begin{Theorem} (with above notation and assuming stability).
(i) $\iota$ determines a homeomorphism between $Gen$ and (the profinite group) $G/G^{0}$.
\newline
(ii) $G(M)$ acts transitively on $Gen$ and for each $p(x)\in Gen$, $Stab(p) = G^{0}$.
\newline
(iii) Left and right genericity coincide.
\end{Theorem}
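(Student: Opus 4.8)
My plan is to reduce everything to one statement: for generic $p,q$, the equality $\iota(p)=\iota(q)$ forces $p=q$, i.e.\ $\iota$ is injective. I will organise the argument so that (i) and (ii) fall out once injectivity and the stabilizer computation are in hand, and treat (iii) separately through a forking characterization of genericity. Throughout I work in the saturated ${\bar M}$ and regard $Gen$ as the (closed, by the remark preceding the theorem, hence compact) subspace of $S_{G}({\bar M})$; to run independence arguments I fix once and for all a small elementary submodel $M_{0}\prec{\bar M}$ over which the generics are the unique nonforking global extensions of their restrictions (Fact 3.1), realising types in a yet larger saturated extension when needed and reflecting witnesses back into ${\bar M}$ by saturation.

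First I record the formal skeleton. Each $G_{\phi}^{0}$ is $\emptyset$-definable, normal, and of finite index, so its finitely many cosets partition $G$ and any $p\in S_{G}({\bar M})$ contains exactly one coset formula $c_{\phi}G_{\phi}^{0}$; thus $\iota(p)=(c_{\phi}G_{\phi}^{0})_{\phi}\in G/G^{0}=\varprojlim_{\phi}G/G_{\phi}^{0}$ is well defined, $G$-equivariant, and continuous (the $\iota$-preimage of the basic clopen set $x\in c_{\phi}G_{\phi}^{0}$ is cut out of $Gen$ by that same formula). Surjectivity is easy: $Gen\neq\emptyset$ by Lemma 3.6, and if $\iota(p)=dG^{0}$ then, genericity being translation invariant, $d^{-1}p$ is generic with $\iota(d^{-1}p)=G^{0}$; translating this one generic by arbitrary $c\in G$ then hits every coset. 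Granting injectivity, (i) is immediate: $\iota$ is a continuous bijection from the compact $Gen$ to the compact Hausdorff $G/G^{0}$, hence a homeomorphism.

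The heart is the action, and here is where stability enters. The engine is the \emph{translation lemma}: if $a$ realises a generic type and $g$ is $f$-independent from $a$ over $M_{0}$, then $ga$ is $f$-independent from $g$ over $M_{0}$ and $tp(ga/M_{0})$ is again generic, with $tp(ga/M_{0}g)$ its unique nonforking extension. This rests squarely on the \emph{uniqueness} clause of Fact 3.1 together with symmetry of $f$-independence, not merely on the algebraic properties. From it I get transitivity: given generic $p,q$, take $a\models p$ and $b\models q$ with $a$ $f$-independent from $b$ over $M_{0}$ and put $g=ba^{-1}$; since $\{a,g\}$ and $\{a,b\}$ are interdefinable over $M_{0}$, $g$ is $f$-independent from $a$, the lemma applies, and comparing the nonforking extensions of the generic $tp(ga/M_{0})=tp(b/M_{0})$ forces $gp=q$. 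For the stabilizer, the inclusion $Stab(p)\subseteq G^{0}$ is elementary: any $g\in Stab(p)$ must fix each coset $c_{\phi}G_{\phi}^{0}$ picked out by $p$, so $g\in\bigcap_{\phi}G_{\phi}^{0}=G^{0}$. The reverse inclusion is the step I expect to be the main obstacle. I would argue that $Stab(p)$ has \emph{finite} index: the orbit $Gen$ is bounded in size, since by uniqueness of nonforking extensions the global generics inject into $S_{G}(M_{0})$, so $Stab(p)$ is of bounded index; in the stable setting this upgrades to finite index (equivalently, that the smallest bounded-index type-definable subgroup $G^{00}$ equals $G^{0}$, via the chain condition of Proposition 3.2), and a type-definable subgroup of finite index is definable, hence contains $G^{0}$ by definition of the latter. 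This yields $Stab(p)=G^{0}$, and injectivity follows: if $\iota(p)=\iota(q)$, the $g$ with $gp=q$ lies in $G^{0}=Stab(p)$, so $q=gp=p$.

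Finally, for (iii) I would prove the side-neutral characterization that a formula over ${\bar M}$ concentrated on $G$ is left-generic iff it does not fork over $\emptyset$. One direction uses that the non-left-generic sets form a proper ideal (Lemma 3.6), so a nonforking formula cannot be swallowed by forking pieces and is therefore syndetic; the other uses the translation lemma to show that a dividing formula is non-generic. The symmetric argument characterises right-genericity by the identical condition, which makes no reference to the side of translation; hence left- and right-genericity coincide, proving (iii).
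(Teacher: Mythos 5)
The paper itself gives no proof of this theorem: it is stated as Poizat's classical fundamental theorem of stable group theory, with the proof left to the cited literature, so your proposal must stand on its own. Its outer architecture is correct and standard: $\iota$ is well defined, equivariant and continuous; surjectivity follows from the ideal lemma (the paper's Lemma 3.5) plus translation-invariance of genericity; injectivity reduces to transitivity together with $Stab(p)=G^{0}$; and a continuous bijection from the compact $Gen$ to the compact Hausdorff $G/G^{0}$ is a homeomorphism. Your decision to read the theorem via global types (transitivity under the big group) is also the right one: read literally over a non-saturated $M$, clause (ii) fails --- for $Th(\Z,+)$ over the standard model, $Gen\cong {\hat \Z}$ and the $\Z$-orbits are proper dense subsets.

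The genuine gaps are in the parts that carry the stability-theoretic weight. First, your ``translation lemma'' \emph{is} the hard content, and you do not prove it; ``the uniqueness clause of Fact 3.1 together with symmetry'' is not a proof. What is actually needed is the bridge between syndetic genericity and forking --- a definable $X\subseteq G$ is generic iff \emph{every} translate $gX$ does not fork over $\emptyset$, whence translates, inverses and nonforking extensions of generics are generic --- and this equivalence (which the paper states immediately after the theorem) is where all the work lies. Second, in the transitivity argument the step ``since $\{a,g\}$ and $\{a,b\}$ are interdefinable over $M_{0}$, $g$ is $f$-independent from $a$'' is a non sequitur: interdefinability of pairs does not transport independence. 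The principle is simply false: take $b\in G(M_{0})$; then $a$ is $f$-independent from $b$ over $M_{0}$, yet $g=ba^{-1}$ is interdefinable with $a$ over $M_{0}$, so $g$ is not $f$-independent from $a$ unless $a\in acl(M_{0})$. The real reason the step holds in your situation is the genericity of $q$, via precisely the unproven lemma above.

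Third, in the stabilizer computation, ``this upgrades to finite index'' is false: $Stab(p)=G^{0}$ has bounded but typically infinite index (again $(\Z,+)$), and $G^{00}=G^{0}$ neither follows from the chain condition of Proposition 3.2 alone nor can be safely invoked, since its standard proof itself goes through the theory of generics --- a circularity. The correct route uses Proposition 3.3: $Stab(p)$ is an intersection of \emph{definable} subgroups $Stab_{\phi}(p)$; boundedness of the orbit forces each $Stab_{\phi}(p)$ to have bounded, hence finite, index (a definable subgroup of bounded index has finite index), so each contains $G^{0}$, and therefore so does $Stab(p)$. Fourth, in (iii), the characterization ``left-generic iff it does not fork over $\emptyset$'' is false: the formula picking out the identity of $G$ does not fork over $\emptyset$ yet is not generic when $G$ is infinite, so the direction ``a nonforking formula \dots is therefore syndetic'' cannot be repaired as stated. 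The true characterization quantifies over all translates, and is then \emph{not} verbatim side-neutral; to deduce (iii) one still needs a further (short) argument, e.g.\ that right translates of left-generic sets are left-generic, combined with the left-sided and right-sided forking characterizations.
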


This induces a (topological) group structure on $Gen$ which we will discuss in  more  detail later.

Let us make a couple of additional comments. The connection of  genericity with (non) forking is the following: a definable subset $X$ of $G$ is generic if and only if every translate $gX$ of $X$ does not fork over $\emptyset$.  We call this latter property $f$-genericity of $X$ (over $\emptyset$).

\subsection{Groups definable  in simple theories}
The understanding of simple theories and groups definable in simple theories developed in tandem with the understanding of specific examples, such as pseudofinite fields or ``smoothly approximated" $\omega$-categorical structures.   

As mentioned earlier there are very few stable theories, and  an impression given in the 1970's and early 80's, especially to those on the ``applied" side of model theory, was of stability as a rather obscure or quixotic subject in model theory, heavily set theoretic in character and with little meaning for the rest of mathematics. 

Shelah had studied what he called  simple theories already in the paper \cite{Shelah-simple}, and given two equivalent definitions: that any complete type does not divide over a small subset, and that no formulas has the ``tree property". 
Where $\phi(x,y)$ has the tree property (with respect to an ambient complete theory $T$) if there is $k$, and a tree $\{b_{\eta}: \eta\in \omega ^{<\omega}\}$ (finite sequences of natural numbers) such that for each $\eta \in \omega^{\omega}$, \{$\phi(x, b_{\eta|n}): n<\omega\}$ is consistent, but for each $\eta\in \omega^{<\omega}$, $\{\phi(x, b_{(\eta,{i})}:i\in\omega\}$ is $k$-inconsistent. 

In his Ph.D. thesis Byunghan Kim proved that  the ``algebraic"  properties of forking in stable theories hold for forking in simple theories.  And in subsequent work Kim and I (\cite{Kim-Pillay}) proved that in simple theories, stationarity (unique nonforking extension) of types over models is replaced by the ``Independence Theorem over a model": if $M$ is a model (elementary substructure of $\bar M$) and $A, B\supseteq M$ are $f$-independent over $M$, and $p_{1}(x)$, $p_{2}(x)$ are complete types over $A, B$ respectively, which do not fork over $M$ and have the same restriction to $M$, then $p_{1}, p_{2}$ have a common nonforking extension over $A\cup B$.   The expression ``Independence Theorem .." was already coined by and proved by Hrushovski (\cite{Hrushovski-pseudofinite})  for so-called $S_{1}$-theories, which  in hindsight are simple theories of ``finite $SU$-rank" with some additional definability properties. 

Let me describe the theory of groups definable in simple theories (the expression simple groups is too ambiguous), which was basically done in \cite{Pillay-definability-simple}, although I will change the terminology a bit. 
So we fix a group $G$ definable without parameters in a simple theory $T$. We will here reserve the expression ``generic" for definable subsets $X$ of $G$,  finitely many translates of which cover $G$.  

As introduced informally in Section 3.1 we define:
\begin{Definition} $X$ (a definable subset of $G$) is $f$-generic if every (left) translate $gX$ of $X$ by an element $g\in G$ does not fork over $\emptyset$
\end{Definition}

This will not in general correspond to {\em generic} in simple theories.

There are again several interrelated aspects; stabilizers, connected components, $f$-generics.  All have interesting differences with the stable situation.   However the chain conditions results from the stable case will drastically fail in the simple case, although I will not go into examples.

We fix $G$ definable in a saturated model of a simple theory, defined without parameters for ``simplicity". 
Fix a model $M$ of $T$ which may or may not be (relatively) saturated, and $p(x)\in S_{G}(M)$.  For $g\in G$, $gp$ is the partial type (over $M,g$) whose set of realizations is the left translate by $g$ of the set of realizations of $p$. As a partial type in variable $x$, $gp$ can  be described as $\{\phi(g^{-1}x): \phi(x) \in p\}$. 

We define $S(p)$ to be the set of $g\in G$ in the big model, such that the partial type $p(x)\cup gp(x)$ does not fork over $M$ (in the sense that no formula in the partial type forks over $M$). This is a type-definable over $M$ subset of $G$ (namely defined by a possibly infinite collection of formulas over $M$).  
The group-theoretic input of the independence theorem over a model is:
\begin{Proposition} $S(p)\cdot S(p)$ which by definition is the set of products $ab$ of elements $a,b\in S(p)$, is a {\em subgroup} of $G$ which is (clearly) infinitely-definable over $M$. 
\end{Proposition}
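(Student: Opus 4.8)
The plan is to extract everything from the group-theoretic content of the Independence Theorem over a model. First I would reformulate membership in $S(p)$ in terms of witnesses. Unwinding the definition of forking, $g\in S(p)$ exactly when there is a realization $a\models p$ with $g^{-1}a\models p$ and with $\mathrm{tp}(a/Mg)$ not forking over $M$; equivalently, writing $b=g^{-1}a$, we have $g=ab^{-1}$ with $a,b\models p$ and $a$ (equivalently $b$, since $a,b$ are interdefinable over $M\cup\{g\}$) independent from $g$ over $M$. With this description the easy facts are immediate. We have $e\in S(p)$ because $p\cup ep=p$ is a complete type over $M$ and hence does not fork over $M$. Next $S(p)$ is symmetric: given a witness $a$ for $g$, the element $a'=g^{-1}a$ witnesses $g^{-1}\in S(p)$, since $a'\models p$, $ga'=a\models p$ (so $a'$ realizes $p\cup g^{-1}p$), and $a'$ stays independent from $g$ over $M$, being interdefinable with $a$ over $M\cup\{g\}$. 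Finally $S(p)$ is type-definable over $M$, since the condition that $p\cup gp$ does not fork over $M$ is an intersection of conditions on $g$ with parameters in $M$.

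The core step, and the only place the Independence Theorem is genuinely used, is the following claim: if $g_1,g_2\in S(p)$ and $g_1$ is independent from $g_2$ over $M$, then $g_1g_2\in S(p)$. To prove it I would look for a single $c$ that simultaneously witnesses $g_1\in S(p)$ on the left and $g_2^{-1}\in S(p)$ on the left, i.e. $c\models p$, $g_1c\models p$, and $g_2^{-1}c\models p$. The condition ``$c\models p\wedge g_1c\models p$, not forking over $M$'' is realized by a complete type $p_1$ over $M\cup\{g_1\}$ extending $p$ and not forking over $M$ (this is just $g_1\in S(p)$), and likewise ``$c\models p\wedge g_2^{-1}c\models p$'' gives a complete type $p_2$ over $M\cup\{g_2\}$ extending $p$ and not forking over $M$ (here one uses the symmetry of $S(p)$, which gives $g_2^{-1}\in S(p)$). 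Since $p_1$ and $p_2$ have the common restriction $p$ to $M$, and $M\cup\{g_1\}$, $M\cup\{g_2\}$ are independent over $M$ by hypothesis, the Independence Theorem supplies a common nonforking extension, that is, a $c$ realizing both with $c$ independent from $\{g_1,g_2\}$ over $M$. Setting $b=g_2^{-1}c\models p$ we get $(g_1g_2)b=g_1c\models p$, and $b$ is interdefinable with $c$ over $M\cup\{g_1,g_2\}$, hence independent from $\{g_1,g_2\}$, and so from $g_1g_2$, over $M$. Thus $b$ witnesses $g_1g_2\in S(p)$.

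Finally I would bootstrap to the statement that $H=S(p)\cdot S(p)$ is a subgroup. It contains $e$ and is symmetric because $S(p)$ is, so the issue is closure under multiplication; for this it suffices to prove $S(p)^3\subseteq S(p)^2$, since then $S(p)^4=S(p)^3\cdot S(p)\subseteq S(p)^2\cdot S(p)=S(p)^3\subseteq S(p)^2$. Given $g_1,g_2,g_3\in S(p)$, the strategy is to realize the product as a plain ratio of two realizations of $p$ and then split off an independent generic. Using the witness description together with the extension and homogeneity properties of nonforking, I would choose the witnesses for the three factors so that consecutive ones match up (each matching being another instance of the amalgamation of the previous paragraph), collapsing $g_1g_2g_3$ to some $ab^{-1}$ with $a,b\models p$; then, choosing $c\models p$ independent from $\{a,b\}$ over $M$ and writing $ab^{-1}=(ac^{-1})(cb^{-1})$ exhibits it as a product of two generic ratios, each of which lies in $S(p)$. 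The main obstacle is precisely this independence bookkeeping: at each invocation of the Independence Theorem one must check that the two parameter sets are independent over $M$ and that the two witness-types really share the restriction $p$ to $M$, and one must verify that a generic ratio $ac^{-1}$, with $a$ independent from $c$ over $M$, genuinely satisfies the witness condition defining $S(p)$. None of these points is deep, but arranging all the required independences at once, typically by first pushing the witnesses to be suitably independent via extension and strong homogeneity over $M$, is the delicate part of the argument.
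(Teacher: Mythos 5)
Your reformulation of $S(p)$, your symmetry and identity checks, and your setting up of the Independence Theorem (the two types $p_1$, $p_2$ over $M\cup\{g_1\}$, $M\cup\{g_2\}$ with common restriction $p$) all have the right shape; note, though, that the paper itself states Proposition 3.8 without proof, deferring to \cite{Pillay-definability-simple}, so the comparison is with the argument there. The genuine gap in your proposal is a single invalid inference that you use at every critical juncture: that if $c$ is independent from $g$ over $M$ and $b$ is interdefinable with $c$ over $M\cup\{g\}$ (e.g.\ $b=g^{-1}c$), then $b$ is independent from $g$ over $M$. Interdefinability over $M\cup\{g\}$ preserves independence over $M\cup\{g\}$, not over $M$: if $c\in M$ and $g\notin acl(M)$, then $c$ is independent from $g$ over $M$, while $g^{-1}c$ is interalgebraic with $g$ over $M$, hence forks with $g$ over $M$. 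You invoke exactly this inference in the parenthetical ``(equivalently $b$)'' of your witness description, in the symmetry argument ($a'=g^{-1}a$ ``stays independent''), and at the end of the amalgamation step ($b=g_2^{-1}c$ ``hence independent''). Those three conclusions are in fact true, but they require a real lemma, which is where the work in the cited paper lies: in a simple theory, if $a$ is independent from $g$ over $M$ and $g^{-1}a$ realizes the \emph{same} type $p$ over $M$ as $a$, then $g^{-1}a$ is independent from $g$ over $M$. This is not proved by the Independence Theorem but by the local ranks $D(\cdot,\phi,k)$: replace $\phi(x;y)$ by $\chi(x;y,z)=\phi(z^{-1}x;y)$, whose family of instances is invariant under left translation; a dividing formula in $tp(g^{-1}a/M\cup\{g\})$ forces some $\chi$-rank of that type to drop below the $\chi$-rank of $p$, while translation invariance together with the independence of $a$ from $g$ forces all $\chi$-ranks of $tp(g^{-1}a/M\cup\{g\})$ to equal those of $p$. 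The hypothesis that $g^{-1}a$ again realizes $p$ is indispensable, which is why no purely formal ``interdefinability'' argument can work.

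Worse, your final bootstrap rests on a claim that is outright false: that a ``generic ratio'' $ac^{-1}$, with $a,c\models p$ and $a$ independent from $c$ over $M$, lies in $S(p)$. Take $T=ACF_0$, $G=(K,+)^2$ with $K$ the monster field, $M$ a model, and $p$ the generic type over $M$ of the parabola $\{(t,t^2)\}$. Two distinct additive translates of the parabola meet in at most one point, which lies in $dcl(M\cup\{g\})$; so for every $g\neq e$ the partial type $p\cup gp$ has at most one realization, and that realization (being outside $acl(M)$ if it realizes $p$) forks with $g$ over $M$. Hence $S(p)=\{e\}$, and indeed $S(p)\cdot S(p)=\{e\}$ is a group; but $a-c\neq e$ for independent realizations $a,c$ of $p$, so $a-c\notin S(p)$. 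Thus the ratio set of independent realizations of $p$ can be far larger than $S(p)$ (ratio-set descriptions of stabilizers require a genericity/wideness hypothesis on $p$, as in Hrushovski's stabilizer theorem, which Proposition 3.8 does not make), and the decomposition $ab^{-1}=(ac^{-1})(cb^{-1})$ cannot witness membership in $S(p)\cdot S(p)$. So your argument for $S(p)^3\subseteq S(p)^2$ collapses; the closure of $S(p)\cdot S(p)$ under multiplication must instead be extracted from the translation lemma above together with the independent-product lemma, re-choosing decompositions by nonforking realizations of the relevant partial types, as in \cite{Pillay-definability-simple}.
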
 

This group $S(p)\cdot S(p)$ is traditionally called the stabilizer of $p$, $Stab(p)$, although it is not literally the set of $g\in G(M)$ such that $gp = p$. In fact as $Stab(p)$ is type-definable with parameters from $M$ it may have no points in $G(M)$ other than the identity. 

Now for the connection with $f$-generics. 
\begin{Proposition} (i) There exist $f$-generic types in $S_{G}(M)$ (that is types $p(x)\in S_{G}(M)$ such that every formula in $p$ is $f$-generic as defined above). 
\newline
(ii)  $p\in S_{G}(M)$ is $f$-generic if and only if $Stab(p)$ has ``bounded index" in $G$.
\end{Proposition}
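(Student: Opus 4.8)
The plan is to run everything through the reformulation that a complete type $p\in S_{G}(M)$ is $f$-generic if and only if for every $g\in G$ (in $\bar M$) the translate $gp$ does not fork over $\emptyset$; this is just the statement ``every formula of $p$ is $f$-generic'' read type-by-type, using that $gp$ forks over $\emptyset$ exactly when some $g\phi$ with $\phi\in p$ does. The one tool doing the real work in both parts is the Independence Theorem over a model, which here plays the role that stationarity played in the stable case. The conceptual heart of the matter, and what I expect to be the main obstacle, is that nonforking over $\emptyset$ is \emph{not} preserved by an arbitrary left translation $x\mapsto gx$: that map is only defined over $g$, so it preserves nonforking over sets containing $g$, not over $\emptyset$. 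Hence neither statement is formal; one must genuinely amalgamate translates, and that is precisely what the Independence Theorem supplies. I will also use freely that over the model $M$ base monotonicity converts nonforking over $\emptyset$ into nonforking over $M$, and that a disjunction of two formulas nonforking over a given set is again nonforking, so that the forking formulas form an ideal on each fixed translate.

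For part (i) I would begin from the remark that, $G$ being $\emptyset$-definable, the formula $x\in G$ does not fork over $\emptyset$, so by the extension property there is a global type $q_{0}\in S_{G}(\bar M)$ nonforking over $\emptyset$. Such a $q_{0}$ need not be $f$-generic, exactly because of the non-invariance noted above. The plan is to upgrade it: choosing $g$ generic over a small model carrying the canonical base of $q_{0}$ (so that $g$ is $f$-independent from that data) keeps $gq_{0}$ nonforking over $\emptyset$, and the Independence Theorem lets one amalgamate these independent translates into a single global type $q$ all of whose left translates remain nonforking over $\emptyset$. Restricting $q$ to $M$ yields the desired $f$-generic $p\in S_{G}(M)$. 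The delicate point is the \emph{simultaneous} control of all translates, pushed through by compactness together with the ideal property of the forking formulas on each translate.

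For the forward direction of part (ii), suppose $p$ is $f$-generic but, for contradiction, that $Stab(p)=S(p)\cdot S(p)$ has unbounded index. Then one may pick more than $2^{|T|+|M|}$ elements $g_{i}$ in distinct cosets of $Stab(p)$; since there are only boundedly many types over $M$, unboundedly many of the $g_{i}$ share a single type over $M$, and from those, by the Erd\H{o}s--Rado theorem, I would extract a pair $g_{i},g_{j}$ that is $f$-independent over $M$ with $tp(g_{i}/M)=tp(g_{j}/M)$. Since $p$ is $f$-generic, base monotonicity gives that $g_{i}p$ and $g_{j}p$ do not fork over $M$, and they have a common restriction to $M$; the Independence Theorem over $M$ then produces a common nonforking extension of $g_{i}p$ and $g_{j}p$. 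Translating back by $g_{i}$, this says $p\cup (g_{i}^{-1}g_{j})p$ does not fork over $M$, i.e. $g_{i}^{-1}g_{j}\in S(p)\subseteq Stab(p)$, contradicting that $g_{i},g_{j}$ lie in distinct cosets. Hence $Stab(p)$ has bounded index.

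For the converse direction, assume $Stab(p)$ has bounded index; I would show every translate $gp$ is nonforking over $\emptyset$. The key sub-step is that a type-definable subgroup of bounded index contains the ``connected component'' $G^{00}$, and that the generics of such a subgroup do not fork over $\emptyset$; combined with the fact (again via the Independence Theorem, amalgamating with a generic element of $S(p)$) that translation by an element of $Stab(p)$ preserves the nonforking-over-$\emptyset$ status, one sees that $f$-genericity is constant along cosets of $Stab(p)$, of which there are only boundedly many, forcing every $gp$ to be nonforking over $\emptyset$. I expect this direction and part (i) to be where the work concentrates, both resting on the same amalgamation of independent translates; the careful bookkeeping of forking over $\emptyset$ versus over $M$, and verifying that generics of a bounded-index type-definable subgroup do not fork over $\emptyset$, is the part I would check most carefully.
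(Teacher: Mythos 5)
Your proposal cannot be compared against a proof in the paper, because the paper gives none: Proposition 3.9 is stated as a survey item with the proofs deferred to the cited paper \cite{Pillay-definability-simple}. Judged on its own terms, your argument has a genuine gap, and it is exactly the obstacle you yourself flagged. The claim driving your part (i) --- that if $g$ is chosen $f$-independent from a small model carrying the canonical base of $q_{0}$, then $gq_{0}$ remains nonforking over $\emptyset$ --- is false. Take $q_{0}$ to be the global type of the identity element $e$: it does not fork over $\emptyset$ and its canonical base sits in $acl^{eq}(\emptyset)$, yet for any $g\notin acl(\emptyset)$ (no matter how independent from anything) the translate $gq_{0}$ is the type ``$x=g$'', which forks over $\emptyset$. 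Independence of the translating element from the \emph{data} of a type gives no protection at all; only genericity of the type itself does, and that is what you are trying to construct, so the ``upgrade'' is circular. The amalgamation step is also ill-posed: distinct translates $gq_{0}$, $hq_{0}$ concentrate on disjoint translates of the realization set, hence are mutually inconsistent and never share a common restriction to $M$, so the Independence Theorem has nothing to amalgamate. The same translation problem kills the key step of part (ii): after amalgamating completions of $g_{i}p$ and $g_{j}p$ into $tp(c/Mg_{i}g_{j})$ nonforking over $M$, ``translating back by $g_{i}$'' requires that $tp(g_{i}^{-1}c/M,g_{i}^{-1}g_{j})$ not fork over $M$; since $x\mapsto g_{i}^{-1}x$ is only defined over $g_{i}$, it preserves nonforking over sets containing $g_{i}$ but not over $M$, and no amount of care in choosing the completions fixes this by forking calculus alone. (Two further, smaller problems: an $M$-indiscernible sequence extracted by Erd\H{o}s--Rado need not contain an $M$-independent pair, so that extraction needs local character and a change of base; and your backward direction invokes ``the generics of a bounded-index type-definable subgroup do not fork over $\emptyset$'', which presupposes part (i) and the theory being built.)

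What actually makes the proofs work, in \cite{Pillay-definability-simple} (and in Wagner's treatment of groups in simple theories), is a tool entirely absent from your outline: the \emph{stratified local ranks} $D(\cdot,\phi,k)$, computed with respect to families of formulas closed under group translation. These ranks are translation-invariant by construction and, in a simple theory, characterize forking; this is precisely the device that breaks the obstruction above. Existence of $f$-generics is then a compactness/ideal argument: formulas of non-maximal stratified rank form a proper ideal over $M$, so some $p\in S_{G}(M)$ has all ranks maximal, and every translate $gp$ has the same (maximal) ranks, hence does not fork over $\emptyset$. The equivalence in (ii) is proved by combining rank computations (a type-definable subgroup has bounded index if and only if it has maximal stratified rank; nonforking extensions and translates preserve rank) with the Independence Theorem. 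The Independence Theorem is indeed essential --- it is what makes $Stab(p)=S(p)\cdot S(p)$ a group (Proposition 3.8) and drives one direction of (ii) --- but it cannot substitute for the ranks, and an argument using it alone, as you propose, does not close.
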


Let us discuss some notions and issues in the proposition above.  $Stab(p)$ is an infinitely definable subgroup of $G = G({\bar M})$.  ``Bounded index" is an appropriate generalization of finite index (to type-definable subgroups).  One definition is that the quotient is the same when passing to larger (saturated) models.  Another is that the quotient has cardinality $\leq 2^{|M|+|T|}$.  In this context let us define $G^{00}_{M}$ to be the smallest type-definable over $M$ subgroup of $G$ of bounded index.  Then  in (ii) $Stab(p)$ will be $G^{00}_{M}$.

Hrushovski's stabilizer theorem for approximate subgroups takes place in a very different environment: a so called $\vee$-definable group in place of a definable group, and a certain (pseudofinite) measure on ``definable" subsets of $G$. But nevertheless, it is an analogue of Proposition 3.9 above rather than the fundamental theorem of stable group theory.  This is discussed in more detail in Section 4.3.

\vspace{5mm}
\noindent
\subsection{Groups definable in $NIP$ theories.}
The $NIP$ property for  first order theory $T$ was defined in Section 1.3.  As in the case of simple theories, $NIP$ theories include theories and structures which have been studied intensively using rather basic model-theoretic tools (quantifier elimination etc.). The idea that $NIP$ theories are also subject to stability-type machinery is again due to Shelah \cite{Shelah-dependent}. The papers \cite{NIPI}, \cite{NIPII} are also behind much of the material in this section.  

One of the main points on the purely model-theoretic side is that nonforking is connected with invariance:
\begin{Fact} Suppose $T$ is $NIP$. Let $p(x)$ be a complete type over the monster model ${\bar M}$, and let $M$ be a small elementary submodel of ${\bar M}$. Then $p$ does not fork over $M$ iff $p$ does not divide over $M$ iff $p$ is $Aut({\bar M}/M)$-invariant. 
\end{Fact}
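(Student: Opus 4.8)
The plan is to establish the three conditions as a cycle, separating the two soft implications (valid in any complete theory) from the single one that genuinely rests on $NIP$. The implication from non-forking to non-dividing is immediate, since dividing formulas fork: if no formula of $p$ forks over $M$ then none divides over $M$. For the implication from $Aut(\bar M/M)$-invariance to non-forking I would argue directly from completeness of $p$. Suppose some $\phi(x,c)\in p$ forked over $M$, witnessed by $\phi(x,c)\vdash\bigvee_{j<n}\psi_j(x,d_j)$ with each $\psi_j(x,d_j)$ dividing over $M$. Since $\bigvee_{j<n}\psi_j(x,d_j)\in p$ and $p$ is complete, some $\psi_j(x,d_j)\in p$. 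Choosing an $M$-indiscernible sequence $(d_j^k:k<\omega)$ with $d_j^0=d_j$ witnessing the dividing, every $d_j^k$ realizes $\mathrm{tp}(d_j/M)$, so invariance forces $\psi_j(x,d_j^k)\in p$ for all $k$; hence $\{\psi_j(x,d_j^k):k<\omega\}\subseteq p$ is consistent, contradicting dividing. (The case $n=1$ already yields that invariance implies non-dividing.) This disposes of two of the three implications.

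The remaining and substantial implication is that non-dividing over the model $M$ forces $Aut(\bar M/M)$-invariance, and it is here that $NIP$ is indispensable. I would argue by contraposition: assuming $p$ is not $M$-invariant, produce a formula of $p$ that divides over $M$. Non-invariance gives a formula $\phi(x,y)$ and parameters $b\equiv_M b'$ with $\phi(x,b)\in p$ and $\neg\phi(x,b')\in p$, so that $\theta(x):=\phi(x,b)\wedge\neg\phi(x,b')\in p$. The engine is the basic combinatorial consequence of $NIP$: there is a bound $n=n_\phi<\omega$ such that along any $M$-indiscernible sequence the truth value of $\phi(a,\cdot)$, for a fixed $a$, alternates at most $n$ times. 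The aim is to use $b\equiv_M b'$ to build an $M$-indiscernible sequence out of $M$-conjugates of the pair $(b,b')$ along which this discrepancy is repeated, and then to convert the alternation bound into the $k$-inconsistency of the translates of a formula of $p$, thereby exhibiting dividing over $M$. Because $M$ is a model one has a good supply of well-based sequences (e.g. coheir Morley sequences over $M$) on which to run this argument.

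The hard part, and the place where all the work sits, is exactly this amplification: turning a single ``flip'' between two $M$-conjugates into a genuine dividing (equivalently, by forking $=$ dividing over models in the $NIP$ setting, forking) configuration. The difficulty is that an $M$-indiscernible sequence has constant type over $M$ at each coordinate, so the alternation one needs must be detected by $p$ itself rather than read off the sequence; arranging an indiscernible sequence on which the $\phi$-membership in $p$ genuinely oscillates --- so that the finite-alternation bound is violated and inconsistency is forced --- is the crux, and is where the finite $VC$-dimension of $\phi$ enters in an essential way. I would either carry this out via the bounded-alternation/coheir construction sketched above or, for a survey, invoke the corresponding results from the $NIP$ literature (\cite{Shelah-dependent}, \cite{NIPI}, \cite{NIPII}). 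Once invariance is obtained the cycle closes and all three conditions are equivalent.
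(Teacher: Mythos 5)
The paper offers no proof to compare against: this is stated as a Fact in a survey, imported from the $NIP$ literature (the result is essentially in \cite{NIPII}; see also \cite{Shelah-dependent} and \cite{Simon-book}), so your fallback of citing those sources is exactly what the paper itself does. Your two soft implications are correct and complete: dividing formulas fork, which gives the first; and your argument that $Aut({\bar M}/M)$-invariance implies non-forking (completeness of $p$ selects a disjunct $\psi_j(x,d_j)\in p$, strong homogeneity of ${\bar M}$ carries $d_j$ to each term $d_j^k$ of the dividing sequence by automorphisms over $M$, and invariance then places all $\psi_j(x,d_j^k)$ inside the consistent set $p$, contradicting $k$-inconsistency) is valid in any theory.

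The gap is in the direction you defer, and it is worth saying precisely what is missing, because the ``amplification'' is not a long computation but two specific ideas, neither of which your sketch contains. First, the hypothesis that $M$ is a \emph{model} is not used merely to ``supply coheir sequences'': it is used to show that $M$-conjugate tuples lie on common $M$-indiscernible sequences (types over models are Lascar strong types). Concretely, let $q$ be a global coheir of $tp(b/M)=tp(b'/M)$ and let $(c_0,c_1,\dots)$ be a Morley sequence of $q$ over $Mbb'$; then both $(b,c_0,c_1,\dots)$ and $(b',c_0,c_1,\dots)$ are $M$-indiscernible. Whichever truth value $p$ gives to $\phi(x,c_0)$, one of these two sequences exhibits a flip of $\phi$ (or of $\neg\phi$) between its first two terms, so one obtains an $M$-indiscernible sequence $(d_i)_{i<\omega}$ with $\phi(x,d_0)\wedge\neg\phi(x,d_1)\in p$. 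Over an arbitrary base set this reduction is unavailable, which is exactly why the statement needs $M$ to be a model and why, in general $NIP$ theories, non-forking over a set only yields Lascar invariance. Second, converting the alternation bound into dividing requires passing to \emph{disjoint} consecutive pairs: the sequence $e_i=(d_{2i},d_{2i+1})$ is $M$-indiscernible (the overlapping pairs $(d_i,d_{i+1})$ are \emph{not} indiscernible as a sequence of pairs, which is the trap in any naive attempt), and a realization of $\{\phi(x,d_{2i})\wedge\neg\phi(x,d_{2i+1}):i<\omega\}$ would make the truth value of $\phi(a,\cdot)$ alternate infinitely along $(d_i)$, contradicting $NIP$; compactness together with indiscernibility of $(e_i)$ then upgrades this to $k$-inconsistency, so $\phi(x,d_0)\wedge\neg\phi(x,d_1)$ divides over $M$ and hence so does $p$. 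Without the first step your non-invariance datum $b\equiv_M b'$ never reaches an indiscernible sequence at all, and without the second the alternation bound produces nothing $k$-inconsistent; these two steps are precisely the crux you flagged but did not supply.
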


Invariance is also called nonsplitting: $Aut({\bar M}/M)$ invariance of $p(x)\in S({\bar M})$ means {\em precisely} that for every $L$-formula $\phi(x,y)$ and tuple $b$ from ${\bar M}$, whether or not $\phi(x,b)$ is in $p(x)$ depends on $tp(b/M)$. So this is a weakening of {\em definability} of $p$ over $M$. 

The difference with stable/simple theories is that in $NIP$ theories it is NOT the case that any type over ${\bar M}$ does not divide over a small model $M$.  Nevertheless the whole machinery of invariant (over a small model) types is crucial for the general theory, as  well as applications. For example, the ubiquity of invariant types, and among them, definable types,  is behind the understanding of imaginaries in theories of valued fields \cite{HHM}.

In the $NIP$ environment we can improve slightly the invariance property. Given an $L$-formula $\phi(x,y)$ and model $M$, we have the topological space $S_{y}(M)$.  So it makes sense to talk about a type $p(x)\in S({\bar M})$ being {\em Borel definable} over $M$:  $p(x)$ is $M$-invariant and moreover for each $\phi(x,y)\in L$, the set of $q(y)\in S_{y}(M)$ such that $\phi(x,b)\in p$ for some (any) realization $b$ of $q$ is a Borel subset of the space $S_{y}(M)$. 

\begin{Proposition} Assume that $T$ is (or has) $NIP$.  Then
\newline
(i)  If $p(x)\in S({\bar M})$ does not fork over $M$ then $p(x)$ is Borel definable over $M$.
\newline
(ii) Suppose moreover that $T$ is countable (i.e. $L$ is countable). Then in (i) we may take the ``Borel definitions" to be countable unions of closeds  (namely $F_{\sigma}$). 
\end{Proposition}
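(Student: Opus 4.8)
The plan is to reduce to a single formula $\phi(x,y)$ and then realize the resulting ``defining'' map as a pointwise limit of continuous functions on a type space, extracting the Borel (respectively $F_\sigma$) description from the finite alternation number supplied by $NIP$.

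First I would apply Fact 3.11: since $p\in S(\bar M)$ does not fork over the model $M$, $p$ is $M$-invariant, i.e.\ for every $\phi(x,y)$ and every $b$, whether $\phi(x,b)\in p$ depends only on $\mathrm{tp}(b/M)$. Fixing $\phi(x,y)$, this lets me define a function $f\colon S_y(M)\to\{0,1\}$ by $f(q)=1$ iff $\phi(x,b)\in p$ for some (equivalently any) $b\models q$; invariance is exactly what makes $f$ well defined. The whole content of the proposition is then that $E_\phi:=f^{-1}(1)$ is Borel in $S_y(M)$, and $F_\sigma$ when $L$ is countable, so it suffices to treat each $\phi$ separately.

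Next I would bring in $NIP$ through the alternation bound: there is a finite $n$ such that for any indiscernible sequence $(a_i:i<\omega)$ and any parameter $b$, the truth value of $\phi(a_i,b)$ alternates at most $n$ times. Choose a Morley sequence $(a_i:i<\omega)$ of $p$ over $M$ inside $\bar M$; by $M$-invariance it is $M$-indiscernible. For each $b\in\bar M$ the sequence $\big(\phi(a_i,b)\big)_{i<\omega}$ is then eventually constant, and the heart of the argument is to identify this eventual value with $f(\mathrm{tp}(b/M))$. I would do this by extending the sequence by one further element $a_\omega\models p|Ma_{<\omega}b$: then $\phi(a_\omega,b)$ records precisely whether $\phi(x,b)\in p$, since $a_\omega$ realizes $p|Mb$; meanwhile, inserting finitely many further points of the indiscernible sequence between a late $a_i$ and $a_\omega$ and invoking the alternation bound forces $\phi(a_\omega,b)$ to agree with the eventual value of $\big(\phi(a_i,b)\big)_i$. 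This eventual-value identity is the step I expect to be the main obstacle, as it is what ties a limit computed purely from the Morley sequence to genuine membership in the global type $p$.

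Finally I would convert this into a complexity statement by eliminating the extra parameters $a_{<\omega}$. Working over the small set $Ma_{<\omega}$, each function $g_i(r)=\big[\phi(a_i,y)\in r\big]$ is clopen on $S_y(Ma_{<\omega})$, and by the previous step their pointwise limit is $\tilde f=f\circ\pi$, where $\pi\colon S_y(Ma_{<\omega})\to S_y(M)$ is the restriction map; in particular the limit depends only on $\mathrm{tp}(b/M)$. Hence $\tilde f^{-1}(1)=\bigcup_{N}\bigcap_{i\ge N}g_i^{-1}(1)$ is a countable union of closed sets, and since $\pi$ is a continuous surjection of compact Hausdorff spaces (so a closed map) with $\tilde f^{-1}(1)=\pi^{-1}(E_\phi)$, pushing forward gives $E_\phi=\pi\big(\tilde f^{-1}(1)\big)$ as a countable union of compact, hence closed, sets. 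This already delivers the Borel conclusion (i); and when $L$ is countable so that $S_y(Ma_{<\omega})$ is metrizable and $\tilde f$ is literally of Baire class one, the same description transparently yields the sharper $F_\sigma$ conclusion of (ii).
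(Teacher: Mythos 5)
The paper itself states Proposition 3.11 without proof (it is a survey; the result is Proposition 2.6 of \cite{NIPI}, see also \cite{NIPII} and \cite{Simon-book}), so I will compare your proposal with the standard proof in those references. Your reduction to a single formula and your appeal to Fact 3.10 are fine, but the central claim of your argument --- that the eventual truth value of $\phi(a_i,b)$ along a \emph{fixed} Morley sequence $(a_i:i<\omega)$ of $p$ over $M$ equals the truth value of ``$\phi(x,b)\in p$'' --- is false. Counterexample: let $T$ be the theory of dense linear orders, $M$ a small model, and $p\in S({\bar M})$ the global type at $+\infty$, i.e. $\{x>c: c\in {\bar M}\}$; it is $Aut({\bar M})$-invariant, hence does not fork over $M$. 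A Morley sequence of $p$ over $M$ is an increasing sequence $M<a_0<a_1<\cdots$, and we may choose $b$ above all the $a_i$. For $\phi(x,y):= x>y$, the trace $\phi(a_i,b)$ is false for every $i$, so your eventual value is ``false'', yet $x>b\in p$. The same example kills the two steps downstream: the limit function on $S_y(Ma_{<\omega})$ does \emph{not} factor through $\pi$ (an element $b'$ with $a_0<b'<a_1$ has the same type over $M$ as $b$, since both lie above $M$, but eventual value ``true''), and the identity $E_\phi=\pi\bigl(\tilde f^{-1}(1)\bigr)$ fails (for $\psi(x,y):= x<y$ one has $E_\psi=\emptyset$, while $b$ above the sequence gives $\psi(a_i,b)$ true for all $i$, so the image is nonempty). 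The reason your justification collapses is quantitative: appending $a_\omega\models p|Ma_{<\omega}b$ creates at most \emph{one} extra alternation beyond those already present in $(\phi(a_i,b))_i$, and this contradicts the NIP bound only if the alternation number of the trace of $b$ on the fixed sequence is already maximal --- which there is no reason for it to be (in the example it is $0$ while the bound is $1$). Note also that one cannot ``insert'' new realizations of $p$ strictly between existing terms of a Morley sequence and stay a Morley sequence; the only available move is appending at the end, which is exactly the move that is too weak here.

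The correct proof keeps your appending idea but applies it where maximality is available: instead of the trace on one fixed sequence, consider, for each $n$, the condition on $b$ that there exist $(c_1,\dots,c_n)\models p^{(n)}|M$ (the type over $M$ of a Morley sequence of length $n$, realized anywhere in ${\bar M}$) with $\phi(c_1,b),\dots,\phi(c_n,b)$ alternating in truth value. By NIP and compactness such $n$ is bounded by some $N_\phi$, and each of these conditions is type-definable over $M$ (by saturation it is the conjunction, over finite $\Sigma_0\subseteq p^{(n)}|M$, of the $M$-formulas $\exists \bar c\,(\bigwedge\Sigma_0(\bar c)\wedge \mathrm{alt}(\bar c,b))$), hence closed in $S_y(M)$ --- this is precisely what restores the $Aut({\bar M}/M)$-invariance that a fixed sequence of parameters destroys. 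Now if $n$ is \emph{maximal} for $b$, then appending $c_{n+1}\models p|Mc_1\dots c_nb$, whose value $\phi(c_{n+1},b)$ is exactly the membership value of $\phi(x,b)$ in $p$, cannot create a new alternation; so every maximal chain ends with the truth value of ``$\phi(x,b)\in p$''. Therefore $E_\phi$ is a finite union of sets of the form (closed)$\,\cap\,$(closed)$\,\setminus\,$(closed), in particular a finite Boolean combination of closed subsets of $S_y(M)$, which gives (i); part (ii) is then a refinement of this description using countability of $L$. Your instinct about appending a realization of $p$ over everything was the right one; the error is in trying to read the answer off one fixed sequence, where the maximality needed to make that appending argument bite is simply not present.
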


We now pass to groups definable in $NIP$ theories. We fix such a group $G$, which we assume to be defined over $\emptyset$ in the monster model ${\bar M}$ of the $NIP$ theory $T$. 

We have already defined $G_{M}^{00}$ in the previous section. The first observation is that this does not depend on $M$ when $T$ is $NIP$, so we just write it as $G^{00}$.  More details about these connected  components will appear in Section 4.

It is natural to ask what, if anything, is the $NIP$ analogue of  Theorem 3.6 and Proposition 3.9 above  (i.e. stabilizers and $f$-generics).

In \cite{NIPII} we introduced terminology ($f$-generics ...) consistent with what we have said above in the stable and simple cases.  This terminology was changed in \cite{Chernikov-Simon}.  I will make a few compromises which will preserve consistency with the earlier material in this article, and hopefully not cause confusion.

\begin{Definition} Let $\phi(x)$ be a formula (possibly with parameters), defining a subset $X$ of $G$.  Let $A$ be a small set of parameters.  We will say that $X$ (or $\phi$) is $f$-generic over $A$, if every (left) translate $gX$ of $X$ by an element $g\in G$ does not fork over $A$.
\end{Definition}

\begin{Remark} (i) So what we called $f$-generic in Section 3.2 is now called $f$-generic over $\emptyset$ for formulas or definable sets.
\newline
(ii) $X$ is $f$-generic over some set $A$ of parameters over which $X$ is defined iff $X$ is $f$-generic over every set $A$ of parameters over which $X$ is defined  (if and only if whenever $(g_{i}:i<\omega)$ is indiscernible over the canonical parameter of $X$, then $\{g_{i}X:i\in\omega\}$ is inconsistent). 
\end{Remark}

As in \cite{NIPII} and \cite{Chernikov-Simon} we will typically work with global complete types of $G$, i.e. $p(x)\in S_{G}({\bar M})$. 

\begin{Definition} (i) $p(x)\in S_{G}({\bar M})$ is said to be weakly $f$-generic if every formula in $p$ is $f$-generic over some set of parameters.
\newline
(ii) We say that $p(x)\in S_{G}({\bar M})$ is strongly $f$-generic if there is some  small model $M$ (or even small set of parameters) such that every formula in $p$ is $f$-generic over $M$, namely  for all $g\in G({\bar M})$, $gp$ does not fork over $M$.
\end{Definition}

For simple theories, we will, just take it as a fact that what we call here a strongly $f$-generic type is the same as what we called an $f$-generic type (namely we can take $M = \emptyset$). Only the notion of strongly $f$-generic appeared in the paper \cite{NIPII} which was the orgin of this study of $NIP$ groups. Weak $f$-generics appear later in \cite{Chernikov-Simon}.

The existence of strongly (or weakly) $f$-generic types for groups $G$ in $NIP$ theories is bound up with a new notion in this article, {\em definable amenability}.  More on this will be said in the section below on topological dynamics. But for now, we will call a definable group $G$ (in an ambient theory) {\em definably amenable} if there is a ``left invariant Keisler measure on $G$". This means that there is a $[0,1]$ valued function $\mu$  on the Boolean algebra of definable subsets of $G$, which takes $G$ to $1$, $\emptyset$ to $0$ and such that if $X,Y$ are disjoint definable subsets of $G$ then $\mu(X\cup Y) = \mu(X) + \mu(Y)$, and such that $\mu(X) = \mu(gX)$ for all $g$ in $G$.  We are here working with definable (with parameters) subsets of $G$ in the monster model, but it is enough to consider definable subsets in a given model. 

We return in this section to thinking of or defining stabilizers of types literally, namely for a global type $p(x)\in S_{G}(\bar M)$, $Stab(p) = \{g\in G({\bar M}): gp = p\}$

We assume $NIP$ for the rest of this subsection. 
The following Fact is Proposition 5.6(i) of \cite{NIPII}.

\begin{Fact} If $p(x)\in S_{G}(\bar M)$ is strongly $f$-generic then  $Stab(p) = G^{00}$. 
\end{Fact}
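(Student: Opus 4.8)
The plan is to prove the two inclusions $Stab(p)\subseteq G^{00}$ and $G^{00}\subseteq Stab(p)$ separately, after fixing once and for all a small model $M$ witnessing strong $f$-genericity, so that $gp$ does not fork over $M$ for every $g\in G(\bar{M})$. By the earlier Fact that in $NIP$ nonforking, nondividing and $Aut(\bar{M}/M)$-invariance coincide for global types, each translate $gp$ is then $M$-invariant. Since an $M$-invariant global type is determined, for each formula $\phi(x,y)$, by the ($tp(\cdot/M)$-invariant) set of parameters $b$ with $\phi(x,b)\in p$, there are only boundedly many such types; hence the orbit $\{gp:g\in G\}$ is bounded, and so $Stab(p)$ is a subgroup of $G$ of \emph{bounded index} which is moreover $Aut(\bar{M}/M)$-invariant.

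First I would dispatch $Stab(p)\subseteq G^{00}$, the soft direction, which uses nothing beyond the compact quotient. Since $G^{00}$ is type-definable over $M$ of bounded index, the relation $x^{-1}y\in G^{00}$ is a bounded $M$-type-definable equivalence relation, and by the standard properties of the logic topology the canonical map $tp(a/M)\mapsto aG^{00}$ from $S_G(M)$ to $G/G^{00}$ is well defined; that is, $a\equiv_M b$ implies $a^{-1}b\in G^{00}$. Now take $g\in Stab(p)$ and any realization $a\models p$. Then $ga\models gp=p$, so $ga$ and $a$ realize the same global type, in particular $ga\equiv_M a$; hence $(ga)G^{00}=aG^{00}$, and since $G^{00}$ is normal this forces $g\in aG^{00}a^{-1}=G^{00}$.

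The substance of the statement is the reverse inclusion $G^{00}\subseteq Stab(p)$, i.e. that every $g\in G^{00}$ actually fixes $p$, and here strong $f$-genericity together with \emph{definable amenability} must both enter. I would argue by contradiction: if $gp\neq p$ for some $g\in G^{00}$, then choosing a definable $Y\in p$ with $g^{-1}Y\notin p$, the set $Z:=Y\setminus g^{-1}Y$ again lies in $p$, hence is $f$-generic over $M$, and satisfies $Z\cap gZ=\emptyset$. It then remains to see that an element of $G^{00}$ cannot move an $f$-generic set entirely off itself. This is precisely where definable amenability is used: from a left-invariant Keisler measure, together with the Borel definability of $p$ over $M$, one attaches to the $f$-generic type $p$ a left-invariant measure which is positive on $Z$ and whose $G^{00}$-orbit is a single point; since $G^{00}$ has no proper type-definable subgroup of bounded index, $g$ must in fact preserve this data, contradicting $Z\cap gZ=\emptyset$ and yielding $gp=p$.

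The main obstacle is exactly this last step: upgrading the cheap conclusion $Stab(p)\supseteq G^{000}$, which follows already from invariance plus bounded index, to the sharp $Stab(p)\supseteq G^{00}$. Invariance of $Stab(p)$ by itself only controls the stabilizer modulo $G^{000}$, and since $G^{000}$ can be strictly smaller than $G^{00}$, mere invariance is not enough; one genuinely needs a type-definable rather than merely invariant grip on $Stab(p)$. The $NIP$-specific inputs that make this possible are the Borel definability of $p$ over $M$, which tames the measure-theoretic behaviour of the family of translates $gZ$, and the invariant measure furnished by definable amenability. Combining these to force the $G^{00}$-invariance of the associated measure, equivalently that the $f$-generic type $p$ has a single point in its $G^{00}$-orbit, is where the real work lies.
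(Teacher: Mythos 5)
The paper gives no proof of this Fact at all: it is imported verbatim as Proposition 5.6(i) of \cite{NIPII}, so the comparison must be with the cited argument and with the logical role the Fact plays in this paper. Your preliminary steps are correct and standard: by Fact 3.10 (NIP: nonforking over $M$ equals $Aut(\bar{M}/M)$-invariance), every translate $gp$ is $M$-invariant, hence the orbit $Gp$ is bounded and $Stab(p)$ is an $Aut(\bar{M}/M)$-invariant subgroup of bounded index, giving $G^{000}\subseteq Stab(p)$; and your coset argument (if $g\in Stab(p)$ then $a$ and $ga$ realize the same global type, hence lie in the same coset of the $M$-type-definable bounded-index subgroup $G^{00}$, so $g\in aG^{00}a^{-1}=G^{00}$ by normality) correctly gives $Stab(p)\subseteq G^{00}$. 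You have also located exactly where the content lies: upgrading $G^{000}\subseteq Stab(p)$ to $G^{00}\subseteq Stab(p)$.

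Your proposed upgrade, however, fails twice over. First, it is circular relative to this paper: definable amenability is \emph{not} a hypothesis of Fact 3.15. In the paper's development, definable amenability is the conclusion of Proposition 3.16, and the proof of the implication (i)$\Rightarrow$(ii) there explicitly takes Fact 3.15 as input --- one needs a type with $Stab(p)=G^{00}$ already in hand in order to transfer the Haar measure of $G/G^{00}$ to a left-invariant Keisler measure on $G$ (this is the whole point of the preamble discussion, including the reduction to countable sublanguages). So you may not invoke an invariant Keisler measure when proving the Fact. Second, even granting such a measure $\mu$ with $\mu(Z)>0$, your contradiction does not close: $\mu(Z)=\mu(gZ)>0$ together with $Z\cap gZ=\emptyset$ is perfectly consistent (it only forces $\mu(Z)\leq 1/2$), and you cannot iterate to infinitely many disjoint translates, since $Z\cap gZ=\emptyset$ does not make $Z, gZ, g^{2}Z,\dots$ pairwise disjoint. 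The phrase ``$g$ must in fact preserve this data'' is precisely the assertion $g\in Stab(p)$, i.e.\ it assumes the conclusion. What is genuinely needed --- as you yourself observe --- is a type-definable, not merely invariant, grip on $Stab(p)$; the known routes are the argument of \cite{NIPII} itself, or, once definable amenability is available, the theorem that definable amenability forces $G^{00}_{M}=G^{000}_{M}$ (\cite{Massicot-Wagner}, \cite{Krupinski-Pillay2}, and \cite{Chernikov-Simon} in the NIP setting), after which the sandwich $G^{000}\subseteq Stab(p)\subseteq G^{00}$ finishes. Your sketch supplies neither, so the key step remains a genuine gap.
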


\begin{Proposition} The following are equivalent:
\newline
(i) There is some strongly $f$-generic type $p(x)\in S_{G}({\bar M})$,
\newline
(ii) $G$ is definably amenable. 
\end{Proposition}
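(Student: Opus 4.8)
The plan is to prove the two implications separately, using measure-theoretic input for one direction and the compact group $G/G^{00}$ together with its Haar measure for the other. I expect (ii) $\Rightarrow$ (i) to be the more routine direction, extracting a type from an invariant measure; the direction (i) $\Rightarrow$ (ii), producing a measure by averaging a strongly $f$-generic type against the Haar measure of $G/G^{00}$, is where the $NIP$ hypothesis does the real work through Borel definability.

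For (ii) $\Rightarrow$ (i): starting from a left-invariant Keisler measure, and extending it if necessary, I would obtain a global left $G(\bar M)$-invariant Keisler measure $\mu$; by the $NIP$ theory of Keisler measures (the measure analogue of Proposition 3.12) $\mu$ is invariant over some small model $M$, so that $\mu(\phi(x,b))$ depends only on $tp(b/M)$. The crucial lemma is that any definable $X$ with $\mu(X) > 0$ does not fork over $M$: if $(b_i : i < \omega)$ is $M$-indiscernible with $b_0$ the canonical parameter of $X = \phi(x,b_0)$, then each $\phi(x,b_i)$ has the same positive measure $c$ by $M$-invariance, while $k$-inconsistency of $\{\phi(x,b_i)\}$ would force every point of $G$ to lie in at most $k-1$ of these sets; summing indicators over finitely many of them via finite additivity bounds their number by $(k-1)/c$, contradicting $M$-indiscernibility producing infinitely many. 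Left-invariance upgrades this to: every positive-measure $X$ is $f$-generic over $M$, since each translate $gX$ again has measure $\mu(X) > 0$. Finally, finite additivity makes the family of $\mu$-null definable sets a proper ideal, so by compactness there is a complete type $p(x) \in S_G(\bar M)$ containing no null set; every formula in $p$ then has positive measure, hence is $f$-generic over $M$, and $p$ is strongly $f$-generic.

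For (i) $\Rightarrow$ (ii): let $p$ be strongly $f$-generic over a small model $M$. By Fact 3.15, $Stab(p) = G^{00}$, so the orbit map $g \mapsto gp$ factors through a map $\bar g \mapsto gp$ from the compact Hausdorff group $G/G^{00}$ into $S_G(\bar M)$ (if $g_1 G^{00} = g_2 G^{00}$ then $g_1^{-1}g_2 \in G^{00} = Stab(p)$, whence $g_1 p = g_2 p$). Writing $h$ for the normalized Haar measure on $G/G^{00}$, I would define, for definable $X \subseteq G$, $\mu(X) = h(\{\bar g \in G/G^{00} : X \in gp\})$. This is a probability measure, with $\mu(G) = 1$ and $\mu(\emptyset) = 0$; it is finitely additive because $gp$ is a complete type, so for disjoint $X, Y$ the sets $\{\bar g : X \in gp\}$ and $\{\bar g : Y \in gp\}$ are disjoint with union $\{\bar g : X \cup Y \in gp\}$; and it is left-invariant because $\{\bar g : hX \in gp\} = \{\bar g : X \in (h^{-1}g)p\}$ is a left translate, which $h$ preserves.

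The main obstacle is the $h$-measurability of the sets $\{\bar g : X \in gp\}$ used to define $\mu$; without it $\mu$ is not even well defined. This is precisely where $NIP$ enters: since $p$ does not fork over $M$ it is Borel definable over $M$ (Proposition 3.12), and for $X = \phi(x,b)$ the condition that $\phi(x,b) \in gp$, i.e.\ that $\phi(g^{-1}x,b) \in p$, is governed by a Borel subset of a type space over $M$, which pulls back to a Borel, hence Haar-measurable, subset of $G/G^{00}$. Verifying that Borel definability genuinely descends to measurability on the compact quotient, compatibly with the logic topology on $G/G^{00}$, is the delicate step; once it is in place, the remaining checks that $\mu$ is a left-invariant Keisler measure are formal.
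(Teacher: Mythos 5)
Your direction (i) $\Rightarrow$ (ii) contains a genuine gap, and it is exactly the gap that this Proposition in the paper exists to repair. The problem is the measurability step, which you flag as ``delicate'' but then dispose of with the phrase ``pulls back to a Borel subset of $G/G^{00}$''. The direction is backwards. Borel definability of $p$ over $M$ (Proposition 3.11(i), not 3.12) says that $Y=\{g\in G: X\in gp\}$ is Borel over the base in the sense that $Y$ is the \emph{preimage}, under the tautological map from $G$ to the relevant type space, of a Borel set there. But the set you must integrate against Haar measure, $\{gG^{00}: X\in gp\}$, is the \emph{forward image} $\pi(Y)$ under the quotient map $\pi:G\to G/G^{00}$ (equivalently, the image of that Borel set under the continuous surjection from the type space onto $G/G^{00}$). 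Continuous images of Borel sets need not be Borel: in the metrizable case they are merely analytic, and when $L$ is uncountable the type spaces are not Polish, so no descriptive-set-theoretic safety net applies at all. This is precisely the flaw Krupi\'nski found in Proposition 5.6(ii) of \cite{NIPII}, which the paper recounts (``There is no reason that $\pi(Y)$ should be Borel''). The paper's repair has two moving parts, both absent from your proposal: first, when the language is countable, Proposition 3.11(ii) upgrades Borel definability to $F_{\sigma}$ definability, and since continuous images of closed (hence compact) subsets of a type space are closed, $\pi(Y)$ is then $F_{\sigma}$, hence Borel and Haar measurable; second, to reduce the general case to countable sublanguages one cannot simply restrict $p$ --- it is not known that $p|L_{0}$ remains strongly $f$-generic --- so the paper instead notes that $Gp$ is bounded (via Fact 3.15), hence the orbit of $p|L_{0}$ is bounded, and invokes Theorem 3.12 of \cite{Chernikov-Simon} to produce \emph{some} strongly $f$-generic type in the reduct, to which the countable-language argument applies; definable amenability in all countable reducts then yields definable amenability of $G$ by a compactness argument on measures. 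Without these steps your $\mu$ is simply not well defined.

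There is also a soft spot in your (ii) $\Rightarrow$ (i), a direction the paper itself delegates to Proposition 5.9 of \cite{NIPII}. You assert that a global left-invariant Keisler measure ``is invariant over some small model $M$'' as ``the measure analogue of Proposition 3.12''. No such analogue exists in the paper: Fact 3.10 and Proposition 3.11 concern types (or measures) \emph{already assumed} not to fork over $M$; they do not produce such an $M$. Indeed for global types in $NIP$ theories, invariance over some small model genuinely fails in general (the paper stresses this), so for an arbitrary invariant measure the existence of such a base model is a substantive assertion --- one whose known proofs run through the Chernikov--Simon analysis of definably amenable $NIP$ groups, which is intertwined with the very existence of strongly $f$-generic types you are trying to establish, so citing it here risks circularity. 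The remainder of that direction (positive-measure formulas do not divide over a model over which $\mu$ is automorphism-invariant, left-invariance upgrading this to $f$-genericity, and the ideal/compactness argument producing $p$) is correct once that point is secured by an honest reference or argument.
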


(ii) implies (i) is Proposition 5.9 of cite{NIPII} which is unproblematic.  However (i) implies (ii) appears as Proposition 5.6 (ii) of \cite{NIPII},  whose proof seems to have a gap,  as pointed out to me by Krzysztof Krupi\'nski.  So I will point out another route to (i) implies (ii), coming out of discussions with Krupi\'nski. I will start by giving some background.

First, obtaining definable amenability from suitable hypothesis uses Haar measure on the compact group $G/G^{00}$ in an interesting manner.  The basic assumption is the existence of a type $p(x)\in S_{G}(\bar M)$ such that $p$ does not fork over some small model $M$ and $Stab(p) = G^{00}$.  The quotient $G/G^{00}$ is a compact group, and so equipped with its Haar measure (a left and right invariant Borel probability measure) $\eta$.   We want to define a left invariant Keisler measure $\mu$ on $G$, using $\eta$. So pick a definable (with parameters) subset $X$ of $G$. Now as $Stab(p) = G^{00}$, whether or not $X\in gp$ depends only on the coset $g/G^{00}$, so we would like to define $\mu(X)$ to be the Haar measure of $\{g/G^{00}: X\in gp\}$, which if possible, yields the required invariant Keisler measure on $G$. But we need the last set to be Borel in $G/G^{00}$ (so measurable). Let $Y = \{g\in G: X \in gp\}$.  By Proposition 3.11 (i), $Y$ is Borel over $M$ in the obvious sense. For $\pi$ the canonical homomorphism $G\to G/G^{00}$, the set we are interested in is precisely $\pi(Y)$. (Alternatively we can replace $Y$ by the collection of $tp(g/M)$ for $g\in Y$, and replace $\pi$ by the well-defined continuous surjective map from $S_{G}(M)$ to $G/G^{00}$.) There is no reason that $\pi(Y)$ should be Borel.  However when $T$ is countable,  Proposition 3.11 (ii) gives the additional information that $Y$ is a countable union of closed sets, and this DOES yield that our set $\pi(Y)\subseteq G/G^{00}$ is also $F_{\sigma}$, so Borel. 
Hence the whole argument of deducing definable amenability involves passing to a countable sublanguage. 

In the proof of Proposition 5.6 (ii) of \cite{NIPII} that the existence of a strongly $f$-generic global type $p$  implies definable amenability, we said that  we CAN reduce to  $T$ being countable, which suggests that we thought that $p|L_{0}$ is also strongly $f$-generic, for any (suitably large) countable sublanguage $L_{0}$ of $L$. In fact this IS problematic, although maybe true. 

After this preamble, let us give a proof of (i) implies (ii). Let $p$ be a global strongly $f$-generic type.  By Fact 3.15, $Stab(p) = G^{00}$, in particular the orbit $Gp$ is bounded.  To prove that $G$ is definable amenable it is enough to prove it in all reducts of $T$ to countable sublanguages. So let $L_{0}$ be a countable sublanguage of $L$ (in which $G$ is definable).  Then $p|L_{0}$ also has bounded orbit under $G$, so by Theorem 3.12 of \cite{Chernikov-Simon} (or rather the proof of (3) implies (1) there)  there is a strongly $f$-generic global type in the reduct of $T$ to $L_{0}$.  The discussion above shows that $G$ is definably amenable in $L_{0}$.  This completes the proof of 3.16.

\vspace{5mm}
\noindent
It is an open question whether definable amenability of $G$ is equivalent to the existence of a global weakly $f$-generic type. On the other hand, under the definably amenable assumption, weakly $f$-generic types correspond to Newelski's ``weak generic" types \cite{Chernikov-Simon}.

There are many more things that could be said here, but we will defer to later sections.  But for now let us just mention that definably amenable groups in $NIP$ theories include solvable groups (as they are amenable as abstract, or discrete groups) but also so-called $fsg$ groups (see \cite{NIPII}) which are abstract versions of ``definably compact" groups, familiar from $o$-minimal theories.  Examples of non definably amenable $NIP$ groups are $SL(2,R)$, $R$ a big real closed field. It is also worth remarking that all stable groups are definably amenable. This can be seen in various ways. For example, by taking the average of the generic types in a suitable sense. Or because stable groups are $fsg$.  So the free group on more than $1$ generator is definably amenable (as a ``pure" group), but is the standard example of a non amenable group (qua discrete group).  See Section  7 for a discussion of the model theory of the free group.

\subsection{Geometric stability} 
The notions of geometric stability and geometric model theory cover lots of ground. At some point they referred to ``combinatorial geometries", namely matroids, but have since come to refer to methods informed by ``algebraic-geometric" rather than combinatorial sensibility. A lot of this article is about geometric model theory, in this more general sense. In this section I will focus on fairly special notions, $1$-basedness, or modularity, some of which have already appeared. I will concentrate on the stable case, although there are generalizations to simple, $o$-minimal, and other (including pseudofinite) environments.  Although the origin of this material is work and conjectures of Zilber, I will not really give an  historical account.  The reader can look at \cite{Pillay-GST}  for more details and references to original papers.  

\begin{Definition} Suppose that $p(x)\in S(M)$ is a complete type over a model $M$ (elementary substructure of monster ${\bar M}$) which is {\em definable}. Then the canonical base of $p$, $Cb(p)$ is the set of ``codes" in ${\bar M}^{eq}$ of the $\phi(x,y)$-definitions of $p$ as $\phi(x,y)$ ranges over $L$-formulas. 
\end{Definition} 

{\em Explanation.}  Given $\phi(x,y)\in L$ the $\phi$-definition of $p$ is a formula $\psi_{\phi}(y,c)$  (where I exhibit the parameters) such that for any $b\in M$, $\phi(x,b)\in p(x)$ iff $M\models \psi_{\phi}(b,c)$. Now any such formula $\psi_{\phi}(y,c)$ can be considered as an element in $M^{eq}$ (i.e. as $c/E$ for suitable $E$). And $Cb(p)$ is the collection of all such codes (as a set, sequence, or definable closure of such a set/sequence).

If $p'(x)\in S({\bar M})$ is the global extension of $p$ given by the same defining schema, then $Cb(p) = Cb(p')$ and is the unique set in ${\bar M}^{eq}$ which is fixed pointwise by any automorphism of ${\bar M}$ fixing $p'$.

As in a stable theory types over models are definable, all this applies to all types in stable theories. 

There is also a good notion of canonical base for arbitrary global types in simple theories, although such types need not be definable. But this depends (in general) on introducing hyperimaginaries, as well as the independence theorem for KP-strong types (some of which will be discussed later). 

Maybe at this point it is worth mentioning why the study of complete types is important, compared to just definable sets, especially for the notion of ``canonical base".  In most ``geometric" contexts,  we distinguish between closed and open sets, especially with regard to the Zariski topology ($ACF$) or Kolchin topology  ($DCF_{0}$). In these cases, we have the $DCC$ on closed sets, definable sets are Boolean combinations of closed/open sets, any closed set is a finite union of its irreducible components, and an (absolutely) irreducible closed set $X$ corresponds to a unique stationary type $p$,  the one axiomatized by $x\in X$ and $x\notin Y$ for $Y$ any proper closed subset of $X$.  In this case the canonical base of $p$ is the same thing as the canonical parameter  (or generator of smallest field of definition) for $X$.  But in general a stable theory does not support any abstract Zariski-topology, so the canonicity aspect of closed irreducible sets is replaced by stationary types. 

From now on we work with a stable theory $T$ and in  $T^{eq}$ (${\bar M}^{eq}$). 

We now give the right definition of $1$-basedness (of a partial type). 
\begin{Definition} (i) Let $\Sigma(x)$ be a partial type, over a set $A$ of parameters.  We call $\Sigma$ $1$-based, if whenever ${\bar a}$ is a tuple of elements realizing $\Sigma$, and $B$ is any algebraically closed set containing $A$ (which we could take to be a model), then  $Cb(tp({\bar a}/B))$ is contained in $acl({\bar a},A)$. 
\newline
(ii) $T$ itself is called $1$-based if each sort of $T$ (consider as a formula over $\emptyset$) is $1$-based (so $A$ disappears).
\end{Definition} 

When $T$ is a $1$-sorted structure, then for $T^{eq}$ to be $1$-based, it suffices for the ``home sort" of $T$ to be $1$-based. 

The expression $1$-based was introduced by Buechler because it says that the canonical base of a stationary type is in the algebraic closure of just one member of a Morley sequence in that type (rather than being in the algebraic closure of the whole Morley sequence).   Anyway $1$-basedness of a formula $\phi(x)$ (over $\emptyset$) says something about  ALL stationary types $tp({\bar a}/A)$ where ${\bar a}$ is a tuple whose coordinates satisfy $\phi(x)$. So pay attention. 

There is a  connection with the structure of strongly minimal sets in $T$. Remember that a definable set $X$ is strongly minimal if it is infinite and all definable subsets of $X$ are finite or cofinite. Equivalently $X$ has Morley rank $1$ and Morley degree $1$.  Strongly minimal sets are the ``building blocks" in a technical sense, of totally transcendental theories in which all types (or definable sets) have finite Morley rank, such as when $T$ is $|T|^{+}$-categorical.  In a strongly minimal set algebraic closure:  $a\in acl({\bar b}, A)$ for $a\in X$ and ${\bar b}$ a tuple from $X$, gives rise to a pregeometry or matroid, as well as a resulting notion of dimension and independence for tuples in a strongly minimal set $X$.  See  Chapter 2 of \cite{Pillay-GST} for more details, and in fact dimension and independence coincide with Morley rank and $f$-independence.

The strongly minimal set $X$ is said to be {\em modular} if for any finite tuples ${\bar a}$, ${\bar b}$ of elements of $X$, ${\bar a}$ is independent from ${\bar b}$ over $acl(\bar a)\cap acl({\bar b})\cap X$  (working over $A$). And $X$ is said to be locally modular if $X$ is modular working over $A$ together with some number (one is enough) of elements of $X$.

\begin{Lemma}  $X$ (strongly minimal) is locally modular iff $X$ is $1$-based.
\end{Lemma}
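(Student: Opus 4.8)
The plan is to translate both properties into statements about $f$-independence and then to isolate the one place where real and imaginary elements must be reconciled. Throughout I work in ${\bar M}^{eq}$ over the fixed base $A$, and I use repeatedly that on the strongly minimal set $X$ the pregeometry given by $acl$ computes Morley rank, so that $f$-independence of real tuples from $X$ is governed by the associated dimension (the submodular inequality for $acl$-closed sets). The first step is to reformulate Definition 3.19: since $Cb(tp({\bar a}/B))$ is the smallest algebraically closed subset of $B$ (in ${\bar M}^{eq}$) over which $tp({\bar a}/B)$ does not fork, one checks that $X$ is $1$-based over $A$ if and only if, for every tuple ${\bar a}$ from $X$ and every algebraically closed $B\supseteq A$, the tuple ${\bar a}$ is $f$-independent from $B$ over $acl({\bar a}A)\cap B$. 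Symmetrically, the definition of modularity of $X$ says exactly that for all real ${\bar a},{\bar b}$ from $X$, ${\bar a}$ is $f$-independent from ${\bar b}$ over $acl({\bar a}A)\cap acl({\bar b}A)\cap X$. So the entire gap between the two conditions is that the $1$-based condition allows $B$ to be an arbitrary, possibly imaginary, algebraically closed set, whereas modularity only sees the real intersection inside $X$.

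For the direction $1$-based $\Rightarrow$ locally modular, I would apply the reformulation above with $B=acl^{eq}({\bar b}A)$, obtaining that ${\bar a}$ is $f$-independent from ${\bar b}$ over the imaginary set $e_0=acl^{eq}({\bar a}A)\cap acl^{eq}({\bar b}A)$. The task is then to replace $e_0$ by a real subset of $X$. The key geometric input is that in a strongly minimal set the nontrivial imaginaries are coordinatised by the real geometry, so that after adjoining a single generic point $c\in X$ the imaginary locus $e_0$ becomes interalgebraic over $Ac$ with $acl({\bar a}Ac)\cap acl({\bar b}Ac)\cap X$. Substituting this real base yields $f$-independence of ${\bar a}$ and ${\bar b}$ over $acl({\bar a}Ac)\cap acl({\bar b}Ac)\cap X$, i.e. modularity over $Ac$; this is precisely local modularity, and the unavoidable appearance of the extra point $c$ is what explains why one obtains local rather than global modularity.

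For the converse, locally modular $\Rightarrow$ $1$-based, I would assume modularity over $A'=Ac$ for some $c\in X$, take an arbitrary tuple ${\bar a}$ from $X$ and an algebraically closed $B\supseteq A$, and write $c_0=Cb(tp({\bar a}/B))$. I would use that $c_0$ lies in $dcl^{eq}$ of a Morley sequence $({\bar a}_i:i<\omega)$ in the stationarisation of $tp({\bar a}/B)$, and in fact in $acl^{eq}({\bar a}_0,\dots,{\bar a}_{n-1})$ for some finite $n$. The whole content is to push this down to $c_0\in acl^{eq}({\bar a}_0A)$, that is, to show the canonical base is already captured by a single member of the Morley sequence: here the modular law, applied to the independent realisations ${\bar a}_i$, forces exactly this collapse. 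A generic choice of the auxiliary point $c$ then removes the parameter, giving $c_0\in acl^{eq}({\bar a}_0A)$ and hence, by stationarity, $c_0\in acl^{eq}({\bar a}A)$, which is $1$-basedness.

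The main obstacle in both directions is the interface between the real pregeometry on $X$ and the imaginary elements in which canonical bases naturally live: in the forward direction one must manufacture the imaginary canonical base inside the real algebraic closure of a single realisation, while in the backward direction one must recognise an imaginary intersection of algebraic closures as a real subset of $X$ after naming one point. This reconciliation is exactly the step that both makes the equivalence nontrivial and forces the ``local'' in local modularity.
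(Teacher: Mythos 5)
Note first that the paper does not prove this Lemma: it is stated as a survey fact, with only the remark that $1$-basedness is ``modularity computed in $X^{eq}$'' and a pointer to \cite{Pillay-GST}, so your proposal has to stand on its own. Your two reformulations are correct ($1$-basedness over $A$ amounts to: $\bar a$ is $f$-independent from $B$ over $acl^{eq}(\bar a A)\cap B$ for every tuple $\bar a$ from $X$ and every algebraically closed $B\supseteq A$ in ${\bar M}^{eq}$), and your converse direction, locally modular $\Rightarrow$ $1$-based, is essentially the standard argument and is completable as you sketch it: the canonical base $c_0$ of $tp(\bar a/B)$ lies in $acl^{eq}$ of finitely many terms $\bar a_0,\dots,\bar a_{n-1}$ of a Morley sequence; applying the modular law (over $Ac$) to the \emph{real} tuples $\bar a_n$ and $\bar a_0\cdots\bar a_{n-1}$ shows $tp(\bar a_n/\bar a_0\cdots\bar a_{n-1}Ac)$ does not fork over the real intersection, which sits inside $acl(\bar a_n Ac)$, whence $c_0\in acl^{eq}(\bar a_n Ac)$; genericity of $c$ and the strong-type (automorphism) argument then give $c_0\in acl^{eq}(\bar a A)$.

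The genuine gap is in the direction $1$-based $\Rightarrow$ locally modular. Your ``key geometric input'' --- that for a strongly minimal set the imaginary intersection $e_0=acl^{eq}(\bar a A)\cap acl^{eq}(\bar b A)$ becomes, after naming one generic $c\in X$, interalgebraic over $Ac$ with the real intersection $acl(\bar a Ac)\cap acl(\bar b Ac)\cap X$ --- is not a citable fact, and invoking it begs the question. As a statement about arbitrary strongly minimal sets it is not a theorem; and the true statement of this kind (coordinatization of the relevant imaginaries by the real geometry after naming a point) is a known \emph{consequence} of local modularity, obtained via Hrushovski's analysis of locally modular geometries or the group configuration, not something available beforehand. (It does hold in $ACF$, but only because of elimination of imaginaries, and there the $1$-basedness hypothesis fails, so that case tests nothing.) In effect you have placed the entire content of this direction inside an asserted input. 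The standard way to cross the real/imaginary interface here is different: let $e=Cb(tp(\bar a/acl^{eq}(\bar b A)))$, which by $1$-basedness lies in $acl^{eq}(\bar a A)\cap acl^{eq}(\bar b A)$, and choose $\bar a'$ realizing the strong type of $\bar a$ over $e$ with $\bar a'$ $f$-independent from $\bar a\bar b$ over $e$; then $e\in acl^{eq}(\bar a' A)$, so $\bar a$ is $f$-independent from $\bar b$ over $\bar a' A$, where $\bar a'$ is now a \emph{real} tuple from $X$ --- but it is not inside $acl(\bar a A)\cap acl(\bar b A)$, and one must still run a dimension count in the pregeometry (or argue via linearity: a two-dimensional faithful family of plane curves in $X^{2}$ produces a type of a pair whose canonical base is not algebraic over the pair, contradicting $1$-basedness, and linear pregeometries are locally modular) to obtain local modularity and to see where the localizing point comes from. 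As written, your proof of this direction omits exactly that step.
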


In fact $1$-basedness of $X$ is rather similar to modularity of $X$, but working in $X^{eq}$: $a$ and $b$ are independent over $acl^{eq}(a)\cap acl^{eq}(b)$  (over $A$). 

This notion of $1$-basedness and modularity involves another kind of complexity, which we sometimes call geometric complexity (initiated or discovered by Zilber).  For example given a strongly minimal set $X$ we can consider the collection of strongly minimal subsets of $X\times X$, in particular (uniformly) definable families of such. Local modularity of $X$ is equivalent to saying that for any definable family ${\mathcal F}$ of strongly minimal subsets of $X\times X$, for a ``general"  point 
$(a_{1},a_{2})\in X\times X$ only finitely many members of the family ${\mathcal F}$ pass through $(a_{1},a_{2})$.

When $X$ is an algebraically closed field, then the family $\{y = ax+b: a,b\in X\}$ witnesses non local modularity of $X$. 

The main point here, which is related to Section 2.4, is the properties of $1$-based groups (as definable groups in an ambient stable theory):
\begin{Proposition} Suppose $G$ is a $1$-based group (say $\emptyset$-definable in saturated model ${\bar M}$ of a stable theory). Then
\newline
(i) $G$ is commutative-by-finite (has a commutative subgroup of finite index),
\newline
(ii) Every type-definable connected subgroup of $G$ is type-defined over $acl^{eq}(\emptyset)$,
\newline
(iii) Every definable subset of $G$ is a Boolean combination of cosets (or translates) of definable subgroups of $G$,
\newline
(iv) Moreover this all holds for each group $G\times ...\times G$. 
\end{Proposition}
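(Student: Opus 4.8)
The plan is to reduce (i)--(iv) to one structural statement about canonical bases, which I will call the \emph{Coset Lemma}: if $G$ is $1$-based and $p=tp(a/B)$ is a stationary type of an element of $G$ over an algebraically closed $B$, then $p$ is the generic type of a coset $a\cdot H$ with $H=Stab(p)$ a connected definable subgroup, and $Cb(p)$ is interalgebraic over $\emptyset$ with the canonical parameter $\lceil aH\rceil$ of that coset. First I would invoke the general stable-group facts that $Stab(p)$ is a connected subgroup of rank at most that of $p$, and that $a$ is generic in $aH$ exactly when these ranks agree; all the force of $1$-basedness goes into the reverse inequality. For this I would take independent realizations $a_1,a_2$ of $p$ over $B$, pass to $a_0=a_1^{-1}a_2$, and analyse $tp(a_0/Ba_1)$: left translation by $a_1$ is $a_1$-definable, so this type is a translate of $p$, while $1$-basedness confines its canonical base to $acl^{eq}(a_0)$. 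Matching this against the location of $Cb(p)$ forces $Stab(p)$ to have full rank, so $p$ is coset-generic, and identifies $Cb(p)$ with $\lceil aH\rceil$. I expect this Coset Lemma to be the main obstacle; everything else is bookkeeping on top of it.

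Granting the Coset Lemma, I would prove (ii) as follows. Let $H$ be a type-definable connected subgroup and let $r$ be its generic type over $acl^{eq}(\emptyset)$, which is stationary with $Stab(r)=H$; the Coset Lemma makes $Cb(r)$ interalgebraic with the code $\lceil H\rceil$. By $1$-basedness $Cb(r)\subseteq acl^{eq}(c)$ for a single $c\models r$, so $\lceil H\rceil\in acl^{eq}(c)$. Applying this to two realizations $c_1,c_2$ of $r$ independent over $acl^{eq}(\emptyset)$: since $\lceil H\rceil\in acl^{eq}(c_1)$ and $c_1$ is independent from $c_2$ over $\emptyset$, $\lceil H\rceil$ is independent from $c_2$ over $\emptyset$; as also $\lceil H\rceil\in acl^{eq}(c_2)$, an element both algebraic over and independent from $c_2$ must lie in $acl^{eq}(\emptyset)$, so $H$ is defined over $acl^{eq}(\emptyset)$. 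Part (iv) is then immediate: a tuple from $G\times\cdots\times G$ is a tuple of elements of $G$, so $G^{n}$ is $1$-based whenever $G$ is, and (i)--(iii) apply to it verbatim.

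For (iii), the Coset Lemma shows that every stationary type concentrating on a definable $X\subseteq G$, taken over a model $M\supseteq acl^{eq}(\emptyset)$, is the generic type of a coset of an $acl^{eq}(\emptyset)$-definable subgroup, the coset lying in $acl^{eq}(a)$ for its generic $a$. I would upgrade this type-by-type description to a single finite Boolean combination by compactness: were $X$ not such a combination, one could find $a\in X$ and $b\notin X$ lying in exactly the same cosets of every $acl^{eq}(\emptyset)$-definable subgroup, whereas the Coset Lemma shows $tp(a/M)$ and $tp(b/M)$ are each determined by precisely this coset data, a contradiction. Finiteness of the combination (rather than an infinite one) comes from the chain condition of Proposition 3.2 (Baldwin--Saxl), which bounds the subgroups relevant to a fixed formula.

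Finally, for (i) I would feed a variant of the same canonical-base calculation: for independent generics $a,b$ of $G$, analysing $tp(a,a^{b}/b)$ via the Coset Lemma and $1$-basedness (its canonical base lying in $acl^{eq}(a,a^{b})$) shows the generic commutator $[a,b]$ lies in $acl^{eq}(\emptyset)$. Fixing generic $a$, the fibres of the $a$-definable map $x\mapsto[a,x]$ are cosets of $C_{G}(a)$, and since the preimage of the bounded set $acl^{eq}(\emptyset)\cap G$ is generic, the definable subgroup $C_{G}(a)$ has bounded, hence finite, index. As the generics generate $G$, the set of elements with finite-index centralizer is all of $G$; hence, by Proposition 3.2 applied to the definable family $\{C_{G}(g):g\in G\}$, the center $Z(G)=\bigcap_{g}C_{G}(g)$ equals a finite subintersection of finite-index subgroups, so $Z(G)$ is an abelian subgroup of finite index, giving (i). The commutator computation is a delicate but routine relative of the Coset Lemma, and the remaining upgrade steps are soft.
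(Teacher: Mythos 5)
You have identified the right skeleton: everything does reduce to your Coset Lemma, which is exactly the Remark the paper states immediately after this Proposition (the paper itself gives no proof, since this is the Hrushovski--Pillay theorem, cited to \cite{Pillay-GST}). But note that the paper's Remark contains a clause your lemma omits and that you then try to recover as part (ii): that $Stab(p)$ is defined over $acl^{eq}(\emptyset)$. Your recovery is circular. A stationary type $r$ over $acl^{eq}(\emptyset)$ has an $Aut({\bar M}/acl^{eq}(\emptyset))$-invariant global nonforking extension, so $Stab(r)$ is automatically $acl^{eq}(\emptyset)$-invariant; thus positing ``the generic type of $H$ over $acl^{eq}(\emptyset)$, stationary with $Stab(r)=H$'' already presupposes the conclusion of (ii). If instead you honestly take $r$ to be the generic of $H$ over its actual parameter set $B$, then $c_{1},c_{2}$ are independent only over $B$, and your pivotal step ``$c_{1}$ is independent from $c_{2}$ over $\emptyset$'' is unjustified --- indeed it is essentially equivalent to what is being proved. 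The missing idea is the translation trick: take $c$ generic in $H$ over $B$ and $g$ generic in $G$ over $B\cup\{c\}$; then $tp(cg/acl^{eq}(Bg))$ is the generic of the coset $Hg$, so $1$-basedness places its canonical base, hence $\lceil Hg\rceil$, hence $\lceil H\rceil$ (as $H=(Hg)(Hg)^{-1}$), inside $acl^{eq}(cg)$; but $cg$ is generic in $G$, hence independent from $B$ over $\emptyset$, and an element of $acl^{eq}(B)$ which is algebraic over something independent from $B$ lies in $acl^{eq}(\emptyset)$.

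Part (i) is worse than a gap: the intermediate claim is false and so is the statement you derive. Let $G$ be a saturated model of the theory of the infinite dihedral group $\Z\rtimes\langle\sigma\rangle$ ($\sigma$ acting by inversion); being abelian-by-finite, this is a $1$-based group (the converse direction of the Hrushovski--Pillay theorem for pure groups), and it is visibly commutative-by-finite. Yet for independent generics $a=x\sigma$, $b=y\sigma$ of the coset $\Z\sigma$ one computes $[a,b]=2(x-y)$, a generic element, so generic commutators do \emph{not} lie in $acl^{eq}(\emptyset)$; moreover $Z(G)$ is trivial and $C_{G}(a)$ is finite, so your conclusion that $Z(G)$ has finite index (centre-by-finite) is genuinely false for $1$-based groups --- which is precisely why the Proposition asserts commutative-by-finite rather than centre-by-finite. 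The standard derivation of (i) goes through (ii) and (iv) instead: for $g\in G$, the graph of conjugation by $g$ restricted to $G^{0}$ is a connected subgroup of $G^{0}\times G^{0}$, hence defined over $acl^{eq}(\emptyset)$ by (ii); there are only boundedly many such subgroups, so $G^{0}/Z(G^{0})$ is bounded, forcing $G^{0}$ to be commutative (in the t.t.\ case this is immediate from connectedness; the general stable case needs more care). Finally, your compactness step in (iii) also fails as stated: since $\{e\}$ is an $acl^{eq}(\emptyset)$-definable subgroup, ``lying in exactly the same cosets of every $acl^{eq}(\emptyset)$-definable subgroup'' just means $a=b$, and the compactness you invoke yields only that $X$ is a union of cosets of a single finite intersection, not a finite Boolean combination; what is needed is a separation lemma --- two distinct types over a model differ on a coset formula of an $acl^{eq}(\emptyset)$-definable connected subgroup --- which follows from the full Coset Lemma plus uniqueness of generics of connected groups, but requires an argument you do not supply. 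Part (iv) and the overall reduction are fine, and your sketch of the Coset Lemma itself is the standard one, though ``full rank'' only makes literal sense in the superstable case while the Proposition concerns arbitrary stable theories.
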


In the case when $T$ is $t.t$, in (ii) type-definable groups are definable, so it just says that connected definable subgroups are $acl(\emptyset)$-definable (a property we call rigidity).

The connection with the general theory of stable groups as in Section 3.1 is, (assuming that $T$ is $t.t$ for simplicity):
\begin{Remark} Suppose $G$ is $1$-based. Let $p(x)\in S_{G}({\bar M})$. Then $Stab(p)$ is a connected definable subgroup of $G$ definable over $acl(\emptyset)$, and $p$ is of the form $gq$ where $q\in S_{G}({\bar M})$ is the (unique) generic type of $Stab(p)$, and $g\in G({\bar M})$.

\end{Remark}

\section{$G/G^{00}$}

I   have been discussing above the connected components of a definable group $G$.   I want to make this more precise. There will be many connections with the following three sections.  These connected components seem rather innuocuous, but it turns out that a lot of information is contained in the quotients, especially in the case of pseudofinite groups.  

Again we fix a complete theory $T$, a saturated model $\bar M$ and a group $G$ $\emptyset$-definable in ${\bar M}$. As above, definability means with respect to $T$ (i.e. ${\bar M}$). 

\begin{Definition} Fix a small set $A$ of parameters.
\newline
(i) By $G_{A}^{0}$ we mean the intersection of all $A$-definable subgroups of $G$ of finite index.
\newline
(ii) By $G_{A}^{00}$ we mean the intersection of all type-definable over $A$, subgroups of $G$, with index at most 
$2^{(|A|+ |T|)}$  (equivalently the smallest such group).
\newline
(iii) By $G_{A}^{000}$ we mean the intersection of all $Aut({\bar M}/A)$-invariant subgroups of $G$ of index at most $2^{(|A| + |T|)}$ (equivalently the smallest such subgroup).
\end{Definition} 

\begin{Remark} (i)   In place of specifying the explicit bound on the index in (ii) and (iii) one could just say ``bounded index" meaning that changing the monster model will not change the quotient. And the reader should check the ``equivalently" statements.
\newline
(ii) For suitable theories (including stable theories) these connected components do not depend on the choice $A$ of the parameter set, in which case we just write $G^{0}$ etc.
\newline
(iii) Part (i) of the definition is transparent: In many situations such as groups definable in $o$-minimal structures or in totally transcendental structures, $G$ has a smallest definable subgroup of finite index, which deserves the name of the connected component of $G$. And it actually is such, in the topological sense, when $G$ is a Lie group definable in $(\R,+,\times)$ or  an algebraic group defined in $(\C,+,\times)$.  The generalization as in (i) of the definition, is also natural, as witnessed by the case where $G$ is (a nonstandard version of) $(\Z,+)$. 
\end{Remark}

It is natural to ask why and how connected components in the sense of (ii) in Definition 4.2 arise.  In the context of groups definable in simple theories, the role of $G^{0}$ for stable groups had to be replaced by the more general $G_{A}^{00}$, on general grounds,  although we still do not know an example where $G^{00}_{A}$ does not equal $G^{0}_{A}$.  These connected components also  naturally arise in a ``nonstandard analysis" setting in the form of the standard part mapping.  Recall that if $X$ is a compact (Hausdorff) space, considered for example as a set living in the standard model $\mathbb V$ of set theory, then we can pass to a saturated elementary extension ${\mathbb V}^{*}$ of $\mathbb V$, where there is a nonstandard version $X^{*}$ of $X$, and we have the standard part map $st_{X}:X^{*}\to X$.  When $X$ is a compact group, then $X^{*}$ is a definable group in ${\mathbb V}^{*}$ and $st_{X}$ will be a surjective group homomorphism, 
whose kernel $ker(st_{X})$ is the group of elements of $X^{*}$  infinitesimally close to the identity and in general (properly) contains 
$(X^{*})_{\mathbb V}^{00}$.  As long as the compact group $X$ is not profinite $(X^{*})_{\mathbb V}^{00}$ will be different from
$(X^{*})_{\mathbb V}^{0}$.  Of course the theory of $({\mathbb V}, \epsilon)$ is not particularly well-behaved model theoretically, but this kind of phenomenon appears widely in unstable $NIP$ theories. 

The connected component in (iii) of Definition 4.1 ($G$ with three zeros as certain Polish model theorists say) was first defined (by me I believe) abstractly, in analogy with the full Lascar group, and it took some time for an example to be found where it was different from $G^{00}$, at least in a tame environment (\cite{Conversano-Pillay}). 

Note that from the definitions we have that $G^{0}_{A}\supseteq G^{00}_{A} \supseteq G^{000}_{A}$. 

Let us now discuss the {\em logic} topology on these quotients which is implicit in earlier sections. The basic definition is:
\begin{Definition} Let $X$ be a definable set (or sort). Let $E$ be a type-definable  equivalence relation on $X$,  all (type)-defined over $A$.  We again call $E$ bounded if there are boundedly many (or $\leq 2^{(|T| + |A|)}$ many classes). Assume $E$ is bounded and let $\pi:X\to X/E$ be the canonical projection. Call a subset $Z$ of $X/E$ closed if $\pi^{-1}(Z)$ is type-definable (over some small set of parameters).
\end{Definition}

The following is an exercise using the compactness theorem, but the reader can look at \cite{Pillay-typedefinability}

\begin{Lemma} (i) In the context of Definition 4.1, the logic topology is compact and Hausdorff.
\newline
(ii) If moreover $X$ is a definable group $G$, and $E$ is an equivalence relation induced by a normal type-definable subgroup, then $X/E = G/H$ is a compact (Hausdorff) topological group, under the logic topology.
\end{Lemma}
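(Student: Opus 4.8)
The plan is to isolate one technical fact about type-definable sets and deduce everything from it. Write $\pi\colon X\to X/E$ for the projection, and for $P\subseteq X$ put $E[P]=\pi^{-1}(\pi(P))=\{a\in X:\exists x\,(E(a,x)\wedge x\in P)\}$, the $E$-saturation of $P$. The key lemma is that \emph{if $P$ is type-definable over a small set then so is $E[P]$}: for $a$ in $\bar M$ we have $a\in E[P]$ iff the partial type $E(a,x)\cup P(x)$ in the variable $x$ is consistent, which by the compactness theorem happens iff each finite subset $\exists x\,(\theta(a,x)\wedge\chi(x))$ — itself a formula in $a$ — holds, so $E[P]$ is the intersection of these formulas. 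Equivalently, $\pi(P)$ is logic-closed for every type-definable $P$, and since $\pi$ is onto, the logic-closed subsets of $X/E$ are \emph{exactly} the images $\pi(P)$ of type-definable $P$. That these form a topology is then immediate (type-definable sets are closed under arbitrary intersections, and under finite unions since the complement of a type-definable set is a union of definable sets and such sets are closed under finite intersection), and compactness follows from the compactness theorem: a family of logic-closed sets with the finite intersection property pulls back to partial types whose finite subfamilies are jointly satisfiable, hence jointly satisfiable by saturation of $\bar M$, so the total intersection is nonempty (the parameter bookkeeping being routine, cf.\ \cite{Pillay-typedefinability}).

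For Hausdorffness I would exploit the transitivity of $E$. Write $E=\bigcap_j R_j$ with each $R_j$ reflexive, symmetric and $A$-definable, the family closed under finite intersection. Transitivity gives $E\circ E=E\subseteq R_j$, so by compactness, for each $j$ there is $j'$ with $R_{j'}\circ R_{j'}\subseteq R_j$ (an ``approximate triangle inequality''). Given distinct classes $c_1\neq c_2$ with representatives $a_1,a_2$, choose $j_0$ with $\neg R_{j_0}(a_1,a_2)$ and then $j_1$ with $R_{j_1}\circ R_{j_1}\subseteq R_{j_0}$, and set $B_i=\{x:R_{j_1}(a_i,x)\}$. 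Since $X\setminus B_i$ is definable, $\pi(X\setminus B_i)$ is logic-closed, so $V_i:=\{c:\pi^{-1}(c)\subseteq B_i\}$ is open; and $E\subseteq R_{j_1}$ gives $[a_i]_E\subseteq B_i$, so $c_i\in V_i$. Finally $V_1\cap V_2=\emptyset$, since any $c$ in it would have a representative $a\in B_1\cap B_2$, forcing $R_{j_1}(a_1,a)\wedge R_{j_1}(a,a_2)$ and hence $R_{j_0}(a_1,a_2)$, a contradiction. This gives (i).

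For (ii), with $H\trianglelefteq G$ normal type-definable of bounded index and $E$ the coset relation $xEy\Leftrightarrow x^{-1}y\in H$, inversion is easy: it is a definable bijection of $G$ carrying type-definable $E$-invariant sets to type-definable $E$-invariant sets, so $\iota\colon gH\mapsto g^{-1}H$ is continuous. The real work is continuity of multiplication, and for this I would first identify the product of the logic topologies on $(G/H)^2$, call it $\tau_{prod}$, with the logic topology $\tau_{quot}$ obtained by viewing $(G/H)^2=(G\times G)/(E\times E)$. Preimages under $\pi\times\pi$ of boxes $Z_1\times Z_2$ are products of type-definable sets, hence type-definable, so every $\tau_{prod}$-closed set is $\tau_{quot}$-closed; thus the identity map $((G/H)^2,\tau_{quot})\to((G/H)^2,\tau_{prod})$ is continuous. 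Since $\tau_{quot}$ is compact (apply (i) to the bounded type-definable relation $E\times E$ on $G\times G$) and $\tau_{prod}$ is compact Hausdorff (Tychonoff together with (i)), this continuous bijection is a homeomorphism and $\tau_{prod}=\tau_{quot}$.

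With the topologies identified, continuity of $m\colon (G/H)^2\to G/H$ reduces to a type-definability computation. Writing $\mu\colon G\times G\to G/H$, $\mu(g_1,g_2)=\pi(g_1g_2)$, so that $m\circ(\pi\times\pi)=\mu$, for $W$ logic-closed in $G/H$ the set $\mu^{-1}(W)=\{(g_1,g_2):g_1g_2\in\pi^{-1}(W)\}$ is type-definable (group multiplication is definable and $\pi^{-1}(W)$ is type-definable) and $(E\times E)$-invariant — and this invariance is \emph{exactly} where normality of $H$ enters, guaranteeing $xEx'\wedge yEy'\Rightarrow xy\,E\,x'y'$. Hence $m^{-1}(W)$ is $\tau_{quot}$-closed, so $\tau_{prod}$-closed, and $m$ is continuous; with (i) this yields that $G/H$ is a compact Hausdorff topological group. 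I expect the genuine content to sit in two places: the Hausdorff separation of (i), which is what forces the use of transitivity via the approximate triangle inequality, and the identification $\tau_{prod}=\tau_{quot}$ in (ii), where the compact-to-Hausdorff homeomorphism trick is what makes an a priori two-variable operation continuous for the product topology.
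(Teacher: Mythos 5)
Your proof is correct, and it follows the route the paper itself points to: the paper offers no proof of this lemma, calling it ``an exercise using the compactness theorem'' and citing \cite{Pillay-typedefinability}, and your argument (type-definability of $E$-saturations via compactness, the approximate triangle inequality $R_{j'}\circ R_{j'}\subseteq R_{j}$ extracted from transitivity, and the compact-versus-Hausdorff bijection trick identifying the product topology on $(G/H)^{2}$ with the logic topology of $(G\times G)/(E\times E)$) is essentially the argument of that reference. The two points you single out as carrying the content are indeed the right ones.

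One step should be made explicit rather than filed under ``routine parameter bookkeeping'', because it is the only place in part (i) where boundedness of $E$ is used, and as literally written the step would fail without it. Saturation of ${\bar M}$ applies only to partial types over a small parameter set, so for an arbitrary family of logic-closed sets with the finite intersection property (and likewise for the arbitrary intersections in your verification of the topology axioms, and in the identification of $\tau_{prod}$ with $\tau_{quot}$) you must first observe that, since $X/E$ has at most $2^{|T|+|A|}$ elements, there are at most $2^{2^{|T|+|A|}}$ distinct logic-closed sets; any family of them may therefore be replaced by a subfamily of bounded cardinality, whose defining partial types involve a small set of parameters, and only then does saturation apply. If $E$ is not bounded, compactness genuinely fails: take $E$ to be equality on a saturated model of the theory of dense linear orders; the definable sets $\{x:x>a\}$, $a\in{\bar M}$, are logic-closed, have the finite intersection property, and have empty intersection. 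Apart from this, every step checks out: the Hausdorff argument correctly uses the symmetry and approximate transitivity of the $R_{j}$, and in (ii) normality of $H$ is invoked exactly where it is needed, namely for the $(E\times E)$-invariance (equivalently, well-definedness) of multiplication on the quotient.
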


In general we call quotients of definable sets by type-definable equivalence relations {\em hyperdefinable} sets, and their elements {\em hyperimaginaries}. So the point is that a bounded hyperdefinable set has the canonical structure of a compact topological space (generalizing the fact a finite set is a compact topological space under the discrete topology).
Let me just remark that another approach to this logic topology is to observe that (with earlier notation)  if $M$ is an elementary substructure of ${\bar M}$ containing $A$, then the map $\pi: X \to X/E$ factors through the tautological map  $a\to tp(a/M)$ from $X$ to $S_{X}(M)$, and that the logic topology on $X/E$ coincides with the quotient topolology on the type space $S_{X}(M)$. Some things have to be checked.

\begin{Lemma}  $G_{A}^{0}$,  $G_{A}^{00}$, and $G_{A}^{000}$ are normal subgroups of $G$. The canonical homomorphism $G/G_{A}^{00}  \to G/G_{A}^{0}$ is continuous, and in fact $G/G_{A}^{0}$ is the maximal profinite quotient of $G/G_{A}^{00}$. 
\end{Lemma}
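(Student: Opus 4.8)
The plan is to treat the three assertions in turn: normality by a ``normal core plus minimality'' argument, continuity by unwinding the definition of the logic topology, and the profinite/maximality statement by identifying $G^0_A/G^{00}_A$ with the connected component of the compact group $G/G^{00}_A$.

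First, normality. For $G^{00}_A$ I would form the normal core $N=\bigcap_{g\in G}gG^{00}_Ag^{-1}$. This is normal by construction; it is type-definable over $A$ because if $\{\pi_i(x):i\}$ is a partial type over $A$ defining $G^{00}_A$, then $N$ is cut out by the $A$-formulas $\forall g\,\pi_i(g^{-1}xg)$ (here $g$ ranges over the $\emptyset$-definable set $G$); and it has bounded index since the action of $G$ on $G/G^{00}_A$ gives an embedding $G/N\hookrightarrow \mathrm{Sym}(G/G^{00}_A)$, whose target is bounded because $[G:G^{00}_A]$ is. As $N\subseteq G^{00}_A$, minimality of $G^{00}_A$ (the smallest type-definable over $A$ subgroup of bounded index, by Remark 4.3) forces $N=G^{00}_A$, so $G^{00}_A$ is normal. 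The identical core construction works for $G^{000}_A$, using that the core of an $Aut({\bar M}/A)$-invariant subgroup is again invariant. For $G^0_A$ it is even cleaner: the core of an $A$-definable finite-index subgroup $H$ is $A$-definable (defined by $\forall g\,(g^{-1}xg\in H)$) and of finite index (an $A$-definable finite-index subgroup has only finitely many conjugates, since $[G:N_G(H)]\le[G:H]$), so $G^0_A$ is an intersection of $A$-definable finite-index \emph{normal} subgroups, hence normal.

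Continuity of the canonical map $q\colon G/G^{00}_A\to G/G^0_A$ (well defined since $G^{00}_A\subseteq G^0_A$) is immediate: if $Z\subseteq G/G^0_A$ is closed then $\pi_0^{-1}(Z)$ is type-definable, and since $\pi_0=q\circ\pi_{00}$ we have $\pi_{00}^{-1}(q^{-1}(Z))=\pi_0^{-1}(Z)$, so $q^{-1}(Z)$ is closed by the definition of the logic topology. That $G/G^0_A$ is profinite follows by writing it as $\varprojlim_j G/H_j$ over the downward-directed family of $A$-definable finite-index normal subgroups $H_j$ (directed, since it is closed under intersection), each quotient finite; one checks that the logic topology agrees with the inverse-limit topology by compactness, exactly as in Lemma 4.5.

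For maximality, write $K=G/G^{00}_A$ with identity component $K^{\circ}$, so the maximal profinite quotient of the compact group $K$ is $K/K^{\circ}$. Since $G/G^0_A$ is profinite, hence totally disconnected, the image of the connected set $K^{\circ}$ in it is trivial, giving the easy inclusion $K^{\circ}\subseteq G^0_A/G^{00}_A$; it remains to prove the reverse inclusion $G^0_A/G^{00}_A\subseteq K^{\circ}$. Using $K^{\circ}=\bigcap\{U:U \text{ an open subgroup of } K\}$, together with the fact that the open subgroups of $K$ are precisely the $H/G^{00}_A$ for $H$ a definable finite-index subgroup with $G^{00}_A\subseteq H$ (finite index plus type-definable gives definable, by compactness), this reduces to showing $G^0_A\subseteq H$ for every such $H$. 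My strategy is to replace $H=H_b$ (defined by $\phi(x,b)$) by the intersection $N'=\bigcap_{b'\equiv_A b}H_{b'}$ of all its $Aut({\bar M}/A)$-conjugates: this $N'$ is $Aut({\bar M}/A)$-invariant, type-definable over $A$, contains $G^{00}_A$, and lies inside $H$, so once one knows that $N'$ has \emph{finite} index it is an $A$-definable finite-index subgroup and therefore contains $G^0_A$, yielding $G^0_A\subseteq N'\subseteq H$. The hard part, and the crux of the whole lemma, is exactly this finiteness — equivalently, the claim that a definable finite-index subgroup containing $G^{00}_A$ has canonical parameter in $acl^{eq}(A)$ (a finite orbit). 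I would attack it by extracting an $A$-indiscernible sequence $(H_i)$ of conjugates, observing that each partial intersection $\bigcap_{i<k}H_i$ has finite index $\le n^k$ while all lie above the bounded-index $G^{00}_A$, and arguing that this decreasing chain must stabilize. This is precisely where the genuine model-theoretic content lies, and where in the general, parameter-dependent setting one must be careful and appeal to the finer theory of the cited references.
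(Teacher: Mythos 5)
Your architecture is essentially the right one (the paper itself states this lemma with no proof, so I am judging the proposal on its own terms): the normal-core arguments for all three components are correct, continuity is indeed immediate from the definition of the logic topology, the inverse-limit description of $G/G_{A}^{0}$ is fine, and you have correctly reduced the entire ``maximal profinite quotient'' assertion to the single claim $(*)$: every definable (over \emph{arbitrary} parameters) finite-index subgroup $H$ of $G$ containing $G_{A}^{00}$ contains $G_{A}^{0}$. The gap is that $(*)$ is then not proved. Your proposed attack --- extract an $A$-indiscernible sequence of conjugates $(H_i)$ and argue that ``this decreasing chain must stabilize'' --- does not work as stated: in an arbitrary theory there is no chain condition to invoke, and if the chain fails to stabilize, the limit $\bigcap_{i<\omega}H_i$ is merely a subgroup of infinite but \emph{bounded} index lying above $G_{A}^{00}$, which contradicts nothing (compare $G_{A}^{00}$ itself, which typically has infinite bounded index). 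So the sketch cannot be closed by chain considerations, and deferring to ``the cited references'' leaves exactly the crux of the lemma unestablished.

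The missing idea is a counting argument using boundedness of $K=G/G_{A}^{00}$. Every $Aut({\bar M}/A)$-conjugate $H_{b'}$ of $H$ contains $G_{A}^{00}$ (as $G_{A}^{00}$ is $A$-invariant), hence is a union of cosets of $G_{A}^{00}$ and is therefore determined by a subset of $K$; since $|K|\leq 2^{|T|+|A|}$, there are at most $2^{2^{|T|+|A|}}$ distinct conjugates, i.e.\ boundedly many. But the conjugates are in bijection with the realizations of $tp(c/A)$, where $c$ is the canonical parameter of $H$, and in a sufficiently saturated model a non-algebraic complete type has unboundedly many realizations: if there were infinitely many pairwise distinct conjugates, compactness and saturation would yield $\kappa$ many realizations of $tp(b/A)$ defining pairwise distinct subgroups, hence more than $2^{2^{|T|+|A|}}$ distinct subsets of $K$, a contradiction. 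So the orbit is \emph{finite}, and your own steps then finish the proof: $N'$ is a finite intersection, hence definable, $A$-invariant hence $A$-definable, of finite index, whence $G_{A}^{0}\subseteq N'\subseteq H$. With this replacement for the indiscernible-chain step your proof is complete and elementary; no appeal to outside references is needed. (Two trivial misreferences: minimality of $G_{A}^{00}$ is Definition 4.1(ii)/Remark 4.2, and compactness of the logic topology is the lemma preceding this one, not ``Lemma 4.5''.)
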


I don't want to get into continuous logic here, but it is implicit in the above notions. For $E$ a bounded type-definable equivalence relation on $X$ one can think of $\pi:X\to X/E$ as a formula with values in the compact space $X/E$ (rather than the space $\{0,1\}$). 

The relation of this whole topic to Lie groups is that a compact group is an inverse limit of compact Lie groups. In fact if $G$ is a compact group then $G^{0}$ (its topological connected component, which should not be confused with $G^{0}$ for $G$  a definable group) is an inverse limit of connected compact Lie groups, and of course $G/G^{0}$ is profinite.  Moreover a compact group is Lie iff it has the $DCC$ on closed subgroups. 

This section will be completed with a few more specialized subsections.
\subsection{The so-called ``Pillay conjecture"}
This conjecture or question was connected with trying to see how hyperimaginaries fit into contexts other than simple theories, as well as how the standard part map described above can be put into a more general model-theoretic environment.
It was also related to the idea of measuring the richness of the category $Def(T)$ for a theory $T$, by what compact spaces/groups can be obtained from $Def(T)$ by the constructions above.

The conjecture was that if one has a definable group $G$ in an $o$-minimal theory which is ``definably compact", and say definably connected $G = G^{0}$, then $G/G^{00}$ is a connected compact Lie group which has the same dimension as a Lie group as the $o$-minimal dimension of $G$. So the canonical map $\pi:G\to G/G^{00}$ can be viewed as an intrinsic standard part map, and $G$ can be viewed as a nonstandard version of the compact Lie group $G/G^{00}$.

This was proved in  \cite{NIPI}, depending on a series of papers and results by the authors and others, including Edmundo, Berarducci, and Otero.  It was partly through this work that Keisler measures were rediscovered, and it also impacted on the general theory of groups definable in $NIP$ theories. In fact the notion of an $fsg$ group (group with finitely satisfiable generics) came  directly out of the work on the Pillay conjecture, and in the $o$-minimal context these are precisely the ``definably compact" groups. 

In the $o$-minimal context, definably compactness of a definable group is equivalent to saying that the group, as a definable set (subset of $M^{n}$) for some model $M$ such that multiplication is continuous (with respect to the topology induced by the $o$-minimal structure), is both closed and bounded.

\subsection{Compact domination}
A variant of the ``Pillay conjecture" is the ``compact domination conjecture" which was articulated in discussions with Hrushovski in 2005 (during the Newton Institute semester on model theory).  The language is motivated by the notion of stable domination which appears in the literature on algebraically closed valued fields (\cite{HHM-book}).   It is also an  attempt to describe model-theoretic features of the standard part map, at least in certain ``tame" environments. 
The basic environment is that of a $\emptyset$-definable group $G$ (in an ambient theory $T$ or saturated model $\bar M$) such that $G^{00}$ is well-defined (namely independent of the choice of a base set of parameters). And $\pi$ is the usual map from $G$ to $G/G^{00}$. 
\begin{Definition} $G$ satisfies compact domination (namely is dominated by $G/G^{00}$) if for any definable (with parameters) subset $X$ of $G$, for almost all $c\in G/G^{00}$ (in the sense of Haar measure), $\pi^{-1}(c)$ is either contained in, or disjoint from $X$.
\end{Definition} 

Compact domination was proved for definably compact groups in $o$-minimal structures in a series of papers (\cite{NIPI}, \cite{NIPII} and \cite{Centralextensions}).  It was generalized to suitable  groups in so-called distal structures by Pierre Simon, but I do not know a precise reference.

There is a certain weak version of compact domination called ``generic compact domination"  which says that for any definable subset $X$ of $G$, for almost all $c\in G/G^{00}$, not both $X\cap \pi^{-1}(c)$ and its complement in $\pi^{-1}(c)$ are ``large" in $\pi^{-1}(c)$. 

For appropriate notions of large this has been proved for suitable groups in $NIP$ theories. See later for references and the  connection with arithmetic regularity theorems.

\subsection{Approximate subgroups}
I will not go into detail about  all the results (by Hrushovski, Breuillard, Green, Tao) but just mention the connection with themes of this section.  We will take the definition of a $k$-approximate group to be a finite symmetric ($A = A^{-1}$) subset $A$ of a group $G$, such that  the cardinality of the set $AA$ of products $ab$ for $a,b\in A$ is at most $k$ times the cardinality of $A$.  The problem is to describe the structure of $k$-approximate groups. 

We follow the ``nonstandard analysis" strategy of  Hrushovski \cite{Hrushovski-approximate}. 

We can consider all pairs $(G,A)$ where $A$ is a (finite) $k$-approximate subgroup of a (not necessarily finite) group $G$, and then consider a model-theoretic limit (e.g. ultraproduct) of all these. The ambient structure where this lives could even be a nonstandard model ${\mathbb V}^{*}$ of set theory. 
Let us simply call this new pair also $(G,A)$.  We assume that the ambient model is saturated and $(G,A)$ is defined over a small submodel $M$.  Let $H$ be the subgroup of $G$ generated by $A$. $H$ is what is called a $\vee$ or $ind$-definable group (or subgroup of $G$).  $H$ need not be itself definable, but it is an increasing union of definable pieces  $A....A$, and by a (type-) definable subset of $H$ we mean a (type-) definable subset of one of those definable pieces. 

Now in contradistinction to the case of definable groups, there is no reason that a $\vee$-definable group such as $H$ should have any nontrivial definable or type-definable subgroups.   However, the assumptions at hand, in particular the existence of an invariant measure $\mu$ on definable subsets of $H$ (the pseudofinite counting measure) as well as the $k$-approximateness of $A$ (in terms of $\mu$ in the obvious sense), help in proving:

\begin{Proposition} $H_{M}^{00}$ exists.
\end{Proposition}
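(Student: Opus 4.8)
The plan is to adapt the stabilizer construction used for groups in simple theories (Propositions 3.8 and 3.9) to the present measure-theoretic setting, with the pseudofinite counting measure $\mu$ playing the role that nonforking played there; this is exactly the strategy of \cite{Hrushovski-approximate}. The $k$-approximateness of $A$ is what guarantees that $\mu$ is well-behaved: normalising $\mu(A)=1$, the counting measure is left-invariant, finitely additive, and finite on each piece $A\cdots A$ of $H$, with $\mu(AA)\le k$. Thus $A$ is \emph{generic}: boundedly many left translates of $AA$ cover any fixed definable piece. Call a definable subset of $H$ \emph{wide} if it has positive $\mu$-measure; the null sets form a left-invariant ideal. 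The essential feature to extract is that this ideal is ``$S1$'', i.e.\ it satisfies the measure-theoretic analogue of the independence theorem over $M$ — this is what will make the stabilizer argument go through in the absence of simplicity.

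I would then fix a global type $p$ concentrating on the wide subsets of $H$ (such a type exists since wideness is preserved under the relevant finite intersections by additivity of $\mu$). Following Proposition 3.8, set
\[
S(p)=\{\,g\in H : p(x)\cup gp(x)\ \text{is wide, i.e.\ does not ``fork'' in measure over } M\,\},
\]
a type-definable over $M$ subset of $H$. The first main step is to prove, via a Fubini argument for the invariant measure $\mu$ in place of the Kim--Pillay independence theorem, that $S(p)\cdot S(p)$ is a \emph{subgroup} of $H$, type-definable over $M$; write $Stab(p)$ for it. Concretely, the symmetry and Fubini properties of $\mu$ let one show that if $g,h$ each translate $p$ to a wide-compatible type then so does $gh$, which is the analogue of the computation behind Proposition 3.8.

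The second main step, the analogue of Proposition 3.9(ii), is that $Stab(p)$ has bounded index. Here the approximate-group hypothesis is used again: since $\mu(AA)\le k$ and the pieces of $H$ are pairwise commensurable under $\mu$, the number of cosets that wide sets can separate is bounded by a function of $k$ and $|M|+|T|$, forcing $[H:Stab(p)]$ to be bounded. Having produced a type-definable over $M$ subgroup of bounded index, I would finish by checking minimality, exactly as in the passage from Proposition 3.9 to the identity $G^{00}_M=Stab(p)$: the wide type $p$ concentrates on a single coset of any type-definable bounded-index subgroup $K$, which forces $Stab(p)\subseteq K$. Hence $Stab(p)$ is contained in every type-definable over $M$ subgroup of bounded index, so the intersection defining $H^{00}_M$ is itself type-definable of bounded index, and $H^{00}_M=Stab(p)$ exists.

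The hard part is the first main step: establishing that the measure-stabilizer is a genuine subgroup of bounded index, i.e.\ proving the ``$S1$''/independence property of the null ideal and running the Fubini symmetry argument carefully. The subtleties are that $\mu$ is only finitely additive and defined on definable sets, so the exchange-of-integration and symmetry steps must be justified combinatorially rather than by ordinary measure theory; and that $H$ is merely $\vee$-definable, so generic elements live in ever-larger pieces $A\cdots A$. One must therefore track in which piece products land and verify that $Stab(p)$, a priori an increasing union of type-definable approximations, is genuinely type-definable of bounded index rather than only $\vee$-definable.
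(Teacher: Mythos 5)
Your proposal follows exactly the route the paper indicates for this result: Hrushovski's stabilizer theorem from \cite{Hrushovski-approximate}, obtaining $H^{00}_{M}$ as $Stab(q)$ for a suitable wide type, with the pseudofinite counting measure (via the $S1$/independence property of the null ideal) playing the role that nonforking plays in the simple-theory stabilizer of Propositions 3.8 and 3.9. This is the same approach the paper describes, so your sketch is consistent with the paper's own account, including the subtleties you flag about $\vee$-definability and the finitely additive measure.
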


This means that there exists a type-definable over $M$ subgroup of $H$ of bounded index, whereby $H_{M}^{00}$ is the smallest one.  This is called the stabilizer theorem (as named by Hrushovski, and maybe more appropriate  than the expression ``Lie model theorem"), as it is obtained as $Stab(q)$ for a suitable type $q$, where $Stab$ corresponds more to the simple case than the stable or $NIP$ cases, and the proof is also related to the simple case. (See  Section 3.2 above). 

Now we can define, in a similar fashion as before, a ``logic topology" on $H/H^{00}_{M}$ which will equip it with the structure of a {\em locally compact group}.  A locally compact group is heuristically an inverse limit of Lie groups (more precisely has an open subgroup $O$ such that arbitrarily small neighbourhoods of the identity contain closed normal subgroups $N$ of $O$ such that $O/N$ is a Lie group). And this yields the basis for a description of $A$ (which gives a description of the $A$'s from the original family).  An important part of the final results of Breuillard, Green, Tao, \cite{BGT}, is that the Lie groups appearing above (for the locally compact group $H/H_{M}^{00}$) are nilpotent-by-finite.

\subsection{Bohr compactification}
This  is closely related to the topological dynamics section below, but we are introducing the notions first  in this subsection  because they fit thematically.  
 
Compactifications, of one kind or another, or in various categories, play a big role in mathematics partly because of the importance of compact spaces (and groups).

Anyway, we are going to work with discrete (or abstract) groups $G$. By a (group) compactification of $G$, we mean a compact (Hausdorff) group $C$ together with a homomorphism $f$ from $G$ to $C$ such that the image $f(G)$ is dense in $C$.  There exists a {\em universal} compactification of $G$, which we will call $b(G)$ (the Bohr compactification of $G$), namely  any  compactification $f:G\to C$ of $G$ factors uniquely through the map $G\to b(G)$.

The only point I want to make is that this can be described model-theoretically, but where the ambient theory is set theory. 
Namely consider $G$ as a living in  the standard model $\mathbb V$ of set theory. Let ${\mathbb V}^{*}$ be a very saturated elementary extension. Then $b(G)$ is precisely $G^{*}/(G^{*})_{\mathbb V}^{00}$ with the homomorphism from $G$ induced by the inclusion of $G$ in $G^{*}$ and the canonical homomorphism from $G^{*}$ to  $G^{*}/(G^{*})_{\mathbb V}^{00}$. 

If one is not comfortable with saturated models of set theory (as it is not clear what this means), we could just work instead with the structure $M$ consisting of $G$ with predicates for all subsets of $G, G\times G, ...$.

Hence for a group $G$ in a saturated model of an arbitrary theory $T$, and a small model $M$ over which $G$ is defined, we can and do think of the compact group $G/G_{M}^{00}$ as the ``definable Bohr compactification" of the group $G(M)$.  In fact this is the universal compactification in a suitablly defined category of definable compactifications of $G(M)$.

It is natural to ask if the quotient $G/G_{M}^{000}$ is also something with meaning for discrete groups. On the face of it the definition is very model-theoretic, involving $G({\bar M})$ and automorphisms of ${\bar M}$ over $M$. But in fact, as in \cite{Krupinski-Pillay}, there is another description. Again we are working in the model-theoretic environment of $T$, ${\bar M}$, $G$,  $\emptyset$-definable, and small model $M$. We first want to give a definition of $p\cdot q$ for $p, q\in S_{G}(M)$ which does  not involve computing things in the saturated model:  so $p\cdot q$ is the intersection of all $X\cdot Y$, for $X,Y$ definable sets in $p, q$ respectively, as a closed set in $S_{G}(M)$. 

\begin{Fact} Consider surjective maps $f$ from $S_{G}(M)$ to groups $H$ say, such that for $p,q\in S_{G}(M)$, $f$ has a common value on $p\cdot q$ which is precisely the product $f(p)\cdot f(q)$ computed in $H$.
Then $G/G^{000}_{M}$ (together with the map from $tp(g/M)$ to $g/G_{M}^{000}$) is the universal such map $f$.
\end{Fact}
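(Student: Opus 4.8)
The plan is to verify two things: that the canonical map $\mu\colon S_G(M)\to G/G^{000}_M$ given by $\mu(\mathrm{tp}(g/M))=gG^{000}_M$ is itself one of the maps $f$ under consideration, and that it is universal, i.e.\ every such $f$ factors through it via a unique group homomorphism. I would treat the universal factorization first, as it is the cleanest and carries the main structural content.

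Given any surjective $f\colon S_G(M)\to H$ satisfying the stated multiplicativity, define $\Phi\colon G\to H$ by $\Phi(g)=f(\mathrm{tp}(g/M))$. The key observation is that $\mathrm{tp}(g_1 g_2/M)\in \mathrm{tp}(g_1/M)\cdot \mathrm{tp}(g_2/M)$: for any $M$-definable $X$ with $g_1\in X$ and $Y$ with $g_2\in Y$ we have $g_1 g_2\in X\cdot Y$, so $\mathrm{tp}(g_1 g_2/M)$ contains each such $X\cdot Y$ and hence lies in their intersection, which by definition is $\mathrm{tp}(g_1/M)\cdot \mathrm{tp}(g_2/M)$. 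Since $f$ is constant on this product with value $f(\mathrm{tp}(g_1/M))f(\mathrm{tp}(g_2/M))$, we get $\Phi(g_1 g_2)=\Phi(g_1)\Phi(g_2)$, so $\Phi$ is a group homomorphism, surjective because $\bar M$ realizes every type over $M$.

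Next I would analyse $K=\ker\Phi$. Since $\Phi(g)=e_H$ depends only on $\mathrm{tp}(g/M)$, the subgroup $K$ is $\mathrm{Aut}(\bar M/M)$-invariant; it is normal as a kernel; and it has bounded index, since $\Phi$ induces an injection $G/K\hookrightarrow H$ with $|H|\le |S_G(M)|\le 2^{|M|+|T|}$. By the minimality in Definition 4.2(iii) ($G^{000}_M$ is the smallest $\mathrm{Aut}(\bar M/M)$-invariant subgroup of index $\le 2^{|M|+|T|}$), we obtain $G^{000}_M\subseteq K$. Hence $\Phi$ factors through the quotient map $\pi\colon G\to G/G^{000}_M$ as $\Phi=\bar f\circ\pi$ for a unique homomorphism $\bar f\colon G/G^{000}_M\to H$, and then $\bar f\circ\mu=f$. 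Uniqueness of $\bar f$ is immediate, as $\mu$ is surjective onto $G/G^{000}_M$. This exhibits $G/G^{000}_M$ as universal, provided $\mu$ is itself admissible.

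It remains to check that $\mu$ is a legitimate map of the required kind, and this is the step I expect to be the main obstacle. Surjectivity is clear, and multiplicativity follows from a compactness argument: any $r\in p\cdot q$ has a realization $c$ lying in every $X\cdot Y$ with $X\in p$, $Y\in q$, so by saturation $c=a'b'$ with $a'\models p$, $b'\models q$, whence $\mu(r)=cG^{000}_M=(a'G^{000}_M)(b'G^{000}_M)=\mu(p)\mu(q)$. The genuinely delicate point is \emph{well-definedness} of $\mu$: one must show $\mathrm{tp}(g/M)=\mathrm{tp}(g'/M)$ forces $gG^{000}_M=g'G^{000}_M$, equivalently $g^{-1}g'\in G^{000}_M$ whenever $g\equiv_M g'$. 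This is precisely where the relationship between the type space $S_G(M)$ and the \emph{invariant} connected component enters, and where the theory of the Lascar group over $M$ is needed: along an $M$-indiscernible sequence of realizations of a fixed type one shows the coset of $G^{000}_M$ is constant, and the real difficulty is bridging from this to arbitrary $\equiv_M$-equivalent pairs (a step that is automatic when $\equiv_M$ coincides with Lascar strong type over $M$, as in the stable case). I would isolate this as the crucial lemma and appeal to \cite{Krupinski-Pillay}; with it in hand, $\mu$ is admissible and the factorization above identifies it as the universal such map.
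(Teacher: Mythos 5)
The paper gives no proof of this Fact at all—it is stated with a citation to \cite{Krupinski-Pillay}—so your proposal is necessarily self-contained where the paper is not, and on the whole it is correct. The factorization half is complete and right: $tp(g_1g_2/M)$ lies in $tp(g_1/M)\cdot tp(g_2/M)$, so $\Phi(g)=f(tp(g/M))$ is a surjective homomorphism; its kernel is $Aut({\bar M}/M)$-invariant of index at most $2^{|M|+|T|}$, hence contains $G^{000}_M$ by the minimality clause of Definition 4.1(iii); and uniqueness of the induced $\bar f$ is immediate from surjectivity of $\mu$. Your saturation argument for multiplicativity of $\mu$ (every realization of a type in $p\cdot q$ is a product $a'b'$ with $a'\models p$, $b'\models q$, and then one uses normality of $G^{000}_M$, Lemma 4.5) is also correct.

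The one place you stop short is well-definedness of $\mu$, and your framing of that step is off in an instructive way. You present the passage from ``constant along $M$-indiscernible sequences'' to arbitrary $\equiv_M$-pairs as the real difficulty, automatic only when types over $M$ coincide with Lascar strong types over $M$, ``as in the stable case.'' But that coincidence holds in \emph{every} theory, precisely because the base here is a model: if $tp(a/M)=tp(b/M)=p$, extend $p$ to a global type $\mathfrak{p}$ finitely satisfiable in $M$ (hence $Aut({\bar M}/M)$-invariant), and let $(c_i:i<\omega)$ be a Morley sequence of $\mathfrak{p}$ over $M\cup\{a,b\}$; then both $(a,c_0,c_1,\ldots)$ and $(b,c_0,c_1,\ldots)$ are $M$-indiscernible, so $a$ and $b$ have Lascar distance at most $2$ over $M$. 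Since the relation $gG^{000}_M=g'G^{000}_M$ is a bounded, $Aut({\bar M}/M)$-invariant equivalence relation on $G$, and Lascar equivalence over $M$ refines every such relation (this is exactly the paper's definition of Lascar strong type, relativized to $M$), it follows that $a\equiv_M b$ forces $aG^{000}_M=bG^{000}_M$. So the ``crucial lemma'' you wanted to outsource to \cite{Krupinski-Pillay} is a standard coheir argument requiring no tameness hypothesis whatsoever; with those few lines inserted, your proof is complete and self-contained.
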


In the case of a discrete group $G(M)$ living in the standard model $M = ({\mathbb V},\epsilon)$ of set theory (and where $T = Th(M)$), $S_{G}(M)$ is precisely $\beta (G(M))$ (the Stone-Cech compactification of $G(M)$) and Fact 4.8 applies, giving a ``new" invariant of the discrete group $G(M)$.  It would be reasonable to call this group the  ``Bohr quasi-compactification of $G(M)$".   In Section 6 we will tallk about a certain ``generalized Bohr compactification" of $G(M)$, but this will be compact (Hausdorff). 

\subsection{Pseudofinite groups and arithmetic regularity}
A pseudofinite group, is, loosely speaking, an infinite model of the theory of finite groups, or for those who like ultraproducts, something elementarily equivalent to an ultraproduct of finite groups (which happens to be infinite).  There are various ways of treating a pseudofinite group from the model-theoretic point of view;  as just a group (in the group language),  or as a group definable in a nonstandard model ${\mathbb V}^{*}$ of set theory (which believes the group to be finite),  or simply a discrete group (so living in the standard model $\mathbb V$ of set theory. 

One of the main results (relevant to this section) is:
\begin{Proposition} Let $G$ be a pseudofinite group, considered as a group definable in a structure $M$, in any of the above senses. Then the definable Bohr compactification of $G$ is commutative-by-profinite, in other words the connected component of the compact group $b(G)$ is commutative.
\end{Proposition}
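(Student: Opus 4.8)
Write-up of the plan.

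\textbf{Reductions.} Recall from Lemma 4.5 that $G/G^0$ is the maximal profinite quotient of $b(G)=G/G^{00}$, so the connected component is $b(G)^0 = G^0/G^{00}$ and the assertion ``commutative-by-profinite'' is exactly the statement that $b(G)^0$ is abelian. Since $b(G)$ is compact, I would first invoke Peter--Weyl to embed it into a product $\prod_i U(n_i)$ of unitary groups, realizing $b(G)$ as the inverse limit of its finite-dimensional continuous images $L_i \le U(n_i)$, each a compact Lie group. Because $\overline{[b(G)^0,b(G)^0]}$ surjects onto $\overline{[L_i^0,L_i^0]}$ for every $i$, and $b(G)$ embeds into $\prod_i L_i$, it suffices to prove that each $L_i^0$ is abelian. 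So the whole problem reduces to: \emph{for every continuous finite-dimensional unitary representation $\rho\colon b(G)\to U(n)$, the image $L=\rho(b(G))$ has abelian connected component.}

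\textbf{The role of Jordan's theorem.} The mechanism I would use to kill non-abelian connected images is Jordan's theorem on finite linear groups: there is a function $J(n)$ such that every finite subgroup of $U(n)$ has an abelian normal subgroup of index at most $J(n)$. Realize $G$ as (elementarily equivalent to) an ultraproduct $\prod_k G_k/\mathcal U$ of finite groups. The plan is to produce, from $\rho$, genuine finite-dimensional representations $\sigma_k\colon G_k\to U(n)$ whose ultralimit recovers $\rho\circ\pi$ (with $\pi\colon G\to b(G)$), so that $L=\overline{\sigma(G)}$ where $\sigma(g)=\lim_{\mathcal U}\sigma_k(g_k)$. For each $k$, Jordan supplies an abelian normal subgroup $A_k\trianglelefteq\sigma_k(G_k)$ of index $\le J(n)$; passing to the ultrafilter one may assume the index is a constant $j\le J(n)$, and the limit $\overline{A}$ of the $A_k$ is then a closed abelian normal subgroup of $L$ of index $\le j$. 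Since a connected group has no proper subgroup of finite index, $L^0\subseteq \overline A$, so $L^0$ is abelian, as required. Conjoined across all $\rho$ this gives $b(G)^0$ abelian; moreover it exhibits $b(G)$ as (abelian connected)-by-profinite, matching the ``commutative-by-profinite'' conclusion exactly (the finite ``virtually'' of Jordan being absorbed into the profinite quotient $G/G^0$).

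\textbf{The main obstacle.} The crux -- and the only genuinely hard step -- is the passage from the \emph{abstract} representation $\rho$ of the Bohr compactification to \emph{honest finite} representations $\sigma_k$ of the approximating groups $G_k$, since a priori $\rho$ lives on $G/G^{00}$ and has no direct internal (finite-level) meaning, and one cannot apply Jordan to the compact Lie group $L$ itself (e.g. $U(n)$ is its own connected component). Here I would exploit that $\rho\circ\pi$ factors through $G/G^{00}$ continuously for the logic topology: its matrix coefficients $g\mapsto\langle\rho(\pi(g))v,w\rangle$ are continuous functions on the type space $S_G(M)$, hence uniform limits of definable (equivalently, internal) locally constant functions on $G$. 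These definable approximations yield internal maps $G_k\to \mathrm{Mat}_n$ that are approximately unitary and approximately multiplicative, i.e. approximate representations of the finite groups $G_k$. To convert these into genuine homomorphisms $\sigma_k$ I would invoke the (dimension-uniform) stability of unitary representations of finite groups -- every approximate unitary representation of a finite, hence amenable, group is close to an actual one -- and check that this straightening is compatible with taking the ultralimit, so that the $\sigma_k$ indeed reproduce $\rho\circ\pi$ in the limit. Establishing this approximation-and-rigidity step carefully (tracking that all estimates are uniform in $k$ for fixed $n$) is where the real work lies; once it is in place, Jordan's theorem closes the argument immediately.
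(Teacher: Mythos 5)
The paper itself contains no proof of Proposition 4.9: it is quoted from \cite{Pillay-compactifications-pseudofinite} and \cite{Nikolov}. Your proposal reconstructs, in all essentials, the argument of the first of these references (which is at bottom Turing's 1938 theorem on finite approximations to Lie groups): reduce by Peter--Weyl to a single compact Lie image $L=\rho(b(G))\leq U(n)$; use that $\rho\circ\pi$ is a definable compactification, hence a uniform limit of definable step functions, which transfer by \L{}o\'{s} to almost-homomorphisms of the finite groups $G_{k}$; straighten these by Ulam--Kazhdan stability for amenable (in particular finite) groups; and finish with Jordan's theorem and an ultralimit. So you are on the same route as the cited proof; the second reference \cite{Nikolov} is genuinely different (it handles arbitrary abstract compact quotients of pseudofinite groups via the Nikolov--Segal commutator-width theorems, with no definability or stability-of-representations input).

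One step of your write-up is overstated, though repairably so. For a fixed scale $\epsilon$, stability only yields homomorphisms $\sigma_{k}$ at distance $\delta(\epsilon)$ from the internal approximations, so the ultralimit $\sigma$ is a genuine homomorphism $G\to U(n)$ that is merely $\eta(\epsilon)$-close to $\rho\circ\pi$, not equal to it; your phrase ``the $\sigma_{k}$ indeed reproduce $\rho\circ\pi$ in the limit'' cannot be taken literally, and forcing exact equality by diagonalizing the scale against the ultrafilter does not work naively (the relevant exceptional sets are a countable union of $\mathcal U$-small sets, which need not be $\mathcal U$-small). The standard fix is to note that equality is not needed: for every $\eta>0$ you obtain a closed subgroup $M_{\eta}=\overline{\sigma(G)}\leq U(n)$ at Hausdorff distance at most $\eta$ from $L$, containing a closed abelian normal subgroup of index at most $J(n)$. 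The space of closed subgroups of $U(n)$ is compact in the Hausdorff metric, and commutativity, inclusion, and being covered by at most $J(n)$ cosets all pass to Hausdorff limits; hence $L$ itself has a closed abelian subgroup of index at most $J(n)$, which is open, so $L^{0}$ is abelian. (Alternatively one can invoke the Montgomery--Zippin fact that sufficiently Hausdorff-close closed subgroups of a compact Lie group are conjugate.) With this adjustment your argument is complete and coincides with the proof behind the citation.
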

So in particular vewing $G$ as a discrete group, its Bohr compactification is commutative-by-profinite. The references for Proposition 4.9 are \cite{Pillay-compactifications-pseudofinite} and \cite{Nikolov} although other proofs have been found in discussions with Hrushovski.

This result together with the earlier material around domination are ingredients in recent work around ``arithmetic regularity" theorems in a tame environent (\cite{CPT1}, \cite{CPT2}) which we discuss briefly. 

In analogy with the  graph regularity theorem of Szemeredi,  there is a group version which  concerns pairs $(G,A)$ where $G$ is a finite group and $A$ an arbitrary subset. Can one say anything nontrivial about the family of $(G,A)$ (as $G$ ranges over arbitrary finite group, and $A$ over arbitrary subsets)? The main paper in the general environment is by Ben Green (\cite{Green}) which only considers the case of commutative $G$. The result is Fourier analytic and difficult to parse (at least for me).   We still have no theorem (or even conjecture) describing the situation when $G$ is not necessarily commutative. 

However let us consider ``tame"  versions of arithmetic regularity theorems, where more assumptions are made on the subset $A$ of $G$. One is that the formula $xy\in A$ is $k$-stable for some fixed $k$. A weaker one is that the formula $xy\in A$ is $k$-$NIP$.   Passing to the nonstandard environent  (ultraproducts for example) we obtain $(G,A)$ definable in a structure $M$, where $xy\in A$ is $k$-stable or $k$-$NIP$, and $G$ is equipped with the pseudofinite counting measure. 
I will state the main result of \cite{CPT2} and then discuss terms and ingredients of the proof. 
\begin{Proposition}Fix $k$ on which everything else depends.   For every $\epsilon$ there is $N$ such that  for all pairs $(G,A)$ 
where $G$ is a finite group and $A$ is a $k$-$NIP$ subset (i.e.  the formula $xy\in A$ is $k$-$NIP$), there is a normal subgroup $H$ of $G$ of index at most $N$, such that there is a $(\delta, r)$ Bohr-neighbourhood $B$ of  $H$, where 
$r\leq N$, and $1/N \leq \delta \leq 1$, such that for some subset $Z$ of $G$ of size $< \epsilon|G|$, after throwing away $Z$, $A$ is a union of at most $N(2/\delta)^{r}$ translates of $B$, up to a set of cardinality $<\epsilon|B|$.
\end{Proposition}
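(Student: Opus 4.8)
The plan is to prove the statement by passing to a pseudofinite limit, establishing a clean qualitative structure theorem there, and then transferring back by compactness. First I would fix $k$ and argue by contradiction: if the conclusion failed for some $\epsilon$ and every $N$, there would be a sequence of finite pairs $(G_i, A_i)$, each with $xy \in A_i$ being $k$-$NIP$, witnessing the failure for $N = i$. Taking an ultraproduct along a nonprincipal ultrafilter yields a pseudofinite group $G$ with a definable subset $A$, living in a saturated model and equipped with the normalized pseudofinite counting measure $\mu$, which is a left-invariant Keisler measure. By {\L}o\'s's theorem the formula $\phi(x,y) : xy \in A$ remains $k$-$NIP$, so we are in the local $NIP$ environment of Section 3.3, and it suffices to produce in this limit the corresponding approximation of $A$ by translates of a Bohr neighbourhood.

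Next I would analyze the compact group attached to $G$. Since $\mu$ is an invariant Keisler measure, $G$ is definably amenable, and working relative to the $NIP$ formula $\phi$ the connected component $G^{00}$ exists, with $G/G^{00}$ a compact group carrying its Haar measure $\eta$ under the logic topology (Section 4). The crucial input is Proposition 4.9: for pseudofinite $G$ the definable Bohr compactification $G/G^{00}$ is commutative-by-profinite, so its connected component is a compact connected commutative group. Consequently $G/G^{00}$ has an open finite-index subgroup $K$ with $K^0$ a protorus and $K/K^0$ profinite; pulling back a suitable open subgroup through $\pi : G \to G/G^{00}$ and descending through the ultraproduct produces the normal subgroup $H \trianglelefteq G_i$ of index $\leq N$. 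The characters of the protorus $K^0$ furnish a continuous homomorphism $\tau : H \to \mathbb{T}^r$ with $r \leq N$, and $B = \{x \in H : \|\tau(x)\| < \delta\}$ is the required $(\delta,r)$ Bohr neighbourhood.

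The heart of the argument is to locate $A$ inside the fibres of $\pi$. Here I would invoke compact domination, or rather its generic version from Section 4.2, valid for suitable groups in $NIP$ theories: for the definable set $A$, for $\eta$-almost all $c \in G/G^{00}$ the fibre $\pi^{-1}(c)$ is, up to a $\mu$-null set, contained in or disjoint from $A$. This yields a Borel set $D \subseteq G/G^{00}$ for which $\mu(A \, \triangle \, \pi^{-1}(D))$ is negligible. It then remains to approximate $D$ inside the commutative-by-profinite group $G/G^{00}$ by a union of translates of the box $\{\|\tau\| < \delta\}$: by regularity of Haar measure and the protorus-times-profinite structure, $D$ is, up to small $\eta$-measure, a union of translates of $B$, and covering the torus $\mathbb{T}^r$ by $\delta$-boxes requires about $(2/\delta)^r$ of them, so with the finite/profinite factor one needs at most $N(2/\delta)^r$ translates. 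Transferring this approximation back through the ultraproduct, the $\mu$-null and small-measure error sets become the discarded set $Z$ of size $< \epsilon|G|$ and the $< \epsilon|B|$ error on the cover, which is exactly the asserted conclusion.

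I expect the main obstacle to be the compact domination step together with its quantitative transfer. Establishing the correct generic compact domination statement for the single definable set $A$ in the merely locally-$NIP$ pseudofinite setting is delicate, since the ambient theory need not be $NIP$ and one must run the argument relative to $\phi$; and converting the purely measure-theoretic approximation in the limit into the finitary statement with uniform $N = N(\epsilon,k)$ and the explicit count $N(2/\delta)^r$ requires controlling all error terms simultaneously through {\L}o\'s's theorem. The commutative-by-profinite structure of Proposition 4.9 is what makes the Bohr-neighbourhood language available at all, so the interplay between that structural result and the domination and approximation estimates is where the real work lies.
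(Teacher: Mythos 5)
Your proposal follows essentially the same route as the paper's (sketched) proof: pass to an ultraproduct with the pseudofinite counting measure, use Proposition 4.9 to get the commutative-by-profinite structure of the definable Bohr compactification (whence the tori and Bohr neighbourhoods), invoke the local generic domination theorem of \cite{CP} to approximate $A$ by a union of fibres of $\pi: G \to G/G^{00}$, and pull the resulting structure back to the finite pairs by {\L}o\'s/compactness. You also correctly identify the genuinely delicate points, namely that domination must be run relative to the single $k$-$NIP$ formula (the ambient theory need not be $NIP$) and that the limit objects must be replaced by internal ones to descend to the $G_i$, which is exactly where the work in \cite{CPT2} lies.
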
 

By a $(\delta, r)$-Bohr neighbourhood of $H$ above we mean the preimage of a homomorphism $g$ from $H$ to the $r$-dimesional torus $S_{1}^{r}$,  of a ball around the identity of radius $\delta$.   The intervention of tori  (connected commutative compact Lie groups) here is due to Proposition 4.9 and the fact that a connected commutative compact group is an inverse limit of connected commutative compact Lie groups (which are precisely finite products of the circle group). 

The proof involves in addition a local generic domination theorem, proved in \cite{CP}.  From these ingredients we obtain a single theorem about pseudofinite groups $G$ together with a subset $A$ such that the formula $xy\in A$ is $NIP$, and this is pulled down, by routine methods, to obtain the statenent about all the finite pairs $(G,A)$.

There is a much stronger result when $xy\in A$ is $k$-stable.  In Proposition 4.10, $B = H$, and $Z=\emptyset$, so $A$ is a union of translates of the bounded index subgroup $H$ up to a set of size $<\epsilon|H|$.  This result in \cite{CPT1} depends on the existing ``local stable group theory"  (\cite{Hrushovski-Pillay-groupsinlocalfields}). Local here means only assuming stability of a single formula.  The first theorems on stable group regularity were by Terry and Wolf \cite{TW} but assumed $G$ to be a finite-dimensional vector space over $\F_{p}$  ($p$ fixed) and used just finitary methods.

\section{Galois theory}
The expression ``Galois theory"  carries quite a bit of baggage. The study of automorphism groups of arbitrary structures should not in itself be considered as part of Galois theory (thus the expression ``Galois type" in Abstract Elementary Classes is rather unfortunate). The existence of some kind of structure on the automorphism group and a suitable Galois correspondence are characteristic features of an abstract Galois theory.

The usual Galois theory of fields (which I will assume to be of characteristic $0$ for simplicity) has two aspects. First, given a 
polynomial equation $P(X) =0$ over a field $K$, we consider the (finite) set $Z = \{a_{1},...,a_{n}\}$ of solutions in a 
given algebraic closure $K^{alg}$ of $K$, and we consider the group $G$ of permutations $\sigma$ of $Z$ such that for 
every polynomial $Q(X_{1},...,X_{n})$ with coefficients from  $K$,  $Q(a_{1},..,a_{n}) = 0$ iff $Q(\sigma(a_{1}),..,\sigma(a_{n})) = 0$. 
This is of  course a finite group which we write as $Gal(P/K)$. We could instead consider the field  $L$ generated by $Z$ over $K$, and look at the group of field automorphisms of $L$ which fix $K$ pointwise. We write this as $Gal(L/K)$  (the usual notation for Galois groups). Then it is easy to see that these two  groups, $Gal(P/k)$ and $Gal(L/K)$ coincide, in the sense that the restriction of any $\sigma\in Gal(L/K)$ to the set $Z$ is in $Gal(P/K)$, and that any $\sigma$ in $Gal(P/K)$ extends uniquely to some $\sigma'\in Gal(L/K)$. 

We have the well-known Galois correspondence between subgroups of $Gal(L/K)$ and fields in between $K$ and $L$: to a subgroup $H$ of $Gal(L/K)$ we associate the fixed field of $H$ etc.  

Poizat \cite{Poizat-imaginaries} put all of this into a general model theoretic context, of a definably closed set $A$ in some
structure $M$, and a formula $\phi(x)$ over $A$ with finitely many solutions in $M$.  We have $Gal(\phi/A)$, the group of permutations of the  set $Z$ of solutions of $\phi(x)$ which are elementary over $A$, i.e. preserve all formulas with parameters from $A$. And in place of $L$ we have $B = dcl^{eq}(A)$, and $Gal(B/A)$. And there is also a Galois correspondence between subgroups of $Gal(B/A)$ and definably closed sets in between $A$ and $B$. Poizat also points out the role of elimination of imaginaries in the fields case.

Let us now work with a theory $T^{eq}$ and take $A = \emptyset$, and consider the collection of  elements of a model $M = M^{eq}$ (or even of a saturated model ${\bar M}^{eq}$) which are in $acl(\emptyset)$, namely which are realizations of some formula over $\emptyset$ with only finitely many other solutions.  This is precisely $acl(\emptyset)$. Now consider the group of all permutations of $acl(\emptyset)$ which are elementary (preserve all formulas).  This is precisely the Shelah group $Gal_{Sh}(T)$ of $T$.  It is the inverse limit of all the $Gal(\phi/\emptyset)$ where $\phi$ is an $L$-formula with finitely many realizations, and as such is a profinite group. 
Two tuples $a,b$ are said to have the same Shelah strong type (over $\emptyset$),  if they have the same type over $acl^{eq}(\emptyset)$, or equivalently $E(a,b)$ for every finite (finitely many classes) $\emptyset$-definable equivalence relation.  So we can consider $Gal_{Sh}(T)$ as the group of elementary permutations of the collection of Shelah strong types.

The $KP$-group $Gal_{KP}(T)$ is the generalization of this from algebraic imaginaries to bounded hyperimaginaries. As discussed earlier by a hyperimaginary we mean $a/E$ where $a$ is a tuple of length at most $\omega$ and $E$ is a type-definable over $\emptyset$ equivalence relation on tuples of the same length as $a$, and  $a/E$ has boundedly many images under automorphisms of ${\bar M}$. (One could also ask that $E$ itself has boundedly many classes.) There is no formal analogue for adjoining hyperimagaries as elements of new sorts (while still remaining in classical $\{0,1\}$-valued first order logic).  Nevertheless we let $bdd^{heq}(\emptyset)$ denote the collection of all bounded hyperimaginaries. The group of permutations of $bdd^{heq}(\emptyset)$ induced by $Aut({\bar M})$ is what we call $Gal_{KP}(T)$.  Two tuples have the same $KP$ strong type if they have the same type over $bdd^{heq}(\emptyset)$ (which one can make sense of)  iff $E(a,b)$ for all bounded type-definable over $\emptyset$ equivalence relations. And $Gal_{KP}(T)$ can be also seen as the group of elementary permutations of the collection of $KP$-strong types.  $Gal_{KP}(T)$ has naturally the structure of a compact group, and $Gal_{Sh}(T)$ is its maximal profinite quotient. 

For more details on hyperimaginaries, and Galois groups, see the original papers \cite{HKP}, and \cite{LP}. The book \cite{Casanovas-book} is also a good resource, especially the theorems regarding ``elimination" of hyperimaginaries.

Finally we have the full Lascar group $Gal_{L}(T)$.  To define it in analogy with $Gal_{Sh}$ and $Gal_{KP}$, we can first define ``bounded ultraimaginaries", these are equivalence classes $a/E$ of tuples with respect to $Aut({\bar M})$-invariant equivalence relations which have boundedly many classes. And $Gal_{L}(T)$ is the group of permutations of the collection of bounded ultraimaginaries induced by $Aut({\bar M})$.  Again we have Lascar strong types: $a$, $b$ have the same Lascar strong type if $E(a,b)$ for every bounded $Aut({\bar M})$-invariant equivalence relation, and $Gal_{L}(T)$ is the group of permutations of all Lascar strong types, induced by $Aut({\bar M})$. 

The Shelah, $KP$, Lascar, equivalence relations on tuples of the same length are denoted $E_{Sh}$, $E_{KP}$ and $E_{L}$.
 So $E_{L}$ refines $E_{KP}$ refines $E_{Sh}$ and we have surjective continuous homomorphisms  $Gal_{L}(T) \to Gal_{KP}(T) \to Gal_{Sh}(T)$. 

These Galois groups are invariants of the first order theory $T$ (up to bi-interpretability). 

All of this is dependent on parameters, and after naming a model by constants, the equivalence relations reduce to having the same type, and the Galois groups are all trivial. 

What, if anything, is the interest of such equivalence relations and Galois groups?
One concerns amalgamation of types: suppose $p(x)$ is a complete type over $\emptyset$.  Let $a$ realize $p(x)$. Let us suppose that $p(x)$ does not fork over $\emptyset$, so in particular for any given model, $p(x)$ has an extension $q(x)\in S(M)$ such that $q$ does not fork over $\emptyset$. But we can say even more; namely that we can choose $q(x)$ to be $tp(b/M)$ where $a$ and $b$ have the same Lascar strong type over $\emptyset$. As remarked above $q(x)$ determines the Lascar strong type of $b$ over $M$ so also over $\emptyset$.  
Hence Lascar strong types (hence $Gal_{L}(T)$) are an obstruction to both uniqueness of nonforking extensions and amalgamation of nonforking extensions.  (So in spite of the ``infinitary logic" appearance, $Gal_{L}(T)$ does belong to first order finitary model theory.) On the other hand there are theorems saying that these are the only obstructions.
The first is called the finite equivalence relation theorem, due originally to Shelah, and the second is called the Independence theorem for $KP$-strong types (due to Kim and me). 
\begin{Fact} (i) Assume that $T$ is  stable.  Suppose $a$, $b$ are tuples each of which is $f$- independent from a given set $A$ over $\emptyset$, and suppose that $a$ and $b$ have the same Shelah strong type over $\emptyset$, then $tp(a/A) = tp(b/A)$.
\newline
(ii) Assume $T$ is simple. Suppose that $A$ and $B$ are $f$-independent over $\emptyset$, and that we have tuples $a, b$ such that $a$ is $f$-independent from $A$ over $\emptyset$ and $b$ is $f$-independent from $B$ and $a$ and $b$ have the same $KP$-strong type over $\emptyset$. Then there is $c$ such that $c$ is $f$-independent from $A\cup B$ over $\emptyset$, and $tp(c/A) = tp(a/A)$ and $tp(c/B) = tp(b/B)$. 
\end{Fact}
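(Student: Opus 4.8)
The two parts are of quite different flavours, so I would prove them separately, relying on the common backbone of the forking calculus recalled in Section 3.

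For part (i) the whole point is \emph{stationarity} of strong types in stable theories. Write $C = acl^{eq}(\emptyset)$; that $a$ and $b$ have the same Shelah strong type over $\emptyset$ means precisely $tp(a/C) = tp(b/C) =: q$. First I would upgrade the hypotheses: since any tuple is automatically $f$-independent from $C$ over $\emptyset$ (no formula over $acl^{eq}(\emptyset)$ forks over $\emptyset$) and forking over $\emptyset$ coincides with forking over $C$, the assumptions that $tp(a/A)$ and $tp(b/A)$ do not fork over $\emptyset$ give, by monotonicity and transitivity, that $tp(a/AC)$ and $tp(b/AC)$ do not fork over $C$. Now because $C$ is algebraically closed in $\bar M^{eq}$ and $T$ is stable, $q$ is stationary, so it has a \emph{unique} non-forking extension to $AC$. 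Hence $tp(a/AC) = tp(b/AC)$, and restricting to $A$ yields $tp(a/A) = tp(b/A)$. The only thing needing care is the base-change bookkeeping in the first step, which is routine.

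For part (ii), the Independence Theorem for $KP$-strong types, I would build on the simple forking calculus of Kim's thesis (symmetry, transitivity, extension, local and finite character). The plan is to prove the amalgamation first over an elementary submodel $M$ and then transfer it to $bdd^{heq}(\emptyset)$. Over $M$: given $A \supseteq M$, $B \supseteq M$ with $A$, $B$ independent over $M$, and $p_1 = tp(a/A)$, $p_2 = tp(b/B)$ both non-forking over $M$ with $p_1|M = p_2|M$, I would show that the union $p_1 \cup p_2$ (identifying the common restriction via an automorphism fixing $M$) is consistent and non-forking over $M$. Consistency is the heart: for finite pieces $\phi(x,a') \in p_1$ and $\psi(x,b') \in p_2$ one produces a realization along a Morley sequence, the essential tool being Kim's lemma that $\theta(x,\bar c)$ divides over $M$ iff $\{\theta(x,c_i)\}$ is inconsistent for a (equivalently, some) Morley sequence $(c_i)$ in $tp(\bar c/M)$. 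Symmetry and transitivity then certify that a realization $c$ of $p_1 \cup p_2$ is independent from $AB$ over $M$.

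The transfer to $KP$-strong types is where I expect the genuine subtlety. Here ``same $KP$-strong type over $\emptyset$'' means $E_{KP}(a,b)$, i.e. equality of types over $bdd^{heq}(\emptyset)$, and since forking over $\emptyset$ coincides with forking over $bdd^{heq}(\emptyset)$ in simple theories, all three independence hypotheses read as independence over this bounded hyperimaginary closure. The key claim is that $bdd^{heq}(\emptyset)$ serves as an amalgamation base exactly as a model $M$ does above; granting the hyperimaginary calculus --- a rigorous notion of type and forking over a set of bounded hyperimaginaries, which are not genuine elements of $\bar M^{eq}$ --- the model-based argument runs with $M$ replaced by $bdd^{heq}(\emptyset)$ and produces the required $c$. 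I expect the main obstacle to be precisely this consistency/amalgamation step: verifying that Kim's lemma and the compactness argument for consistency survive the passage to the hyperimaginary setting, which is exactly the point at which the Independence Theorem over a model does \emph{not} formally suffice and the analysis of $KP$-strong types is needed.
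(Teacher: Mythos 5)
First, for calibration: the paper offers no proof of this statement at all --- it is labelled a \emph{Fact} precisely because it is quoted from the literature, part (i) being Shelah's finite equivalence relation theorem and part (ii) the Kim--Pillay independence theorem for $KP$-strong types. So your proposal has to stand or fall on its own merits.

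Your part (i) is correct, and it is the standard argument. Writing $C = acl^{eq}(\emptyset)$, the bookkeeping you call routine is indeed routine: independence from algebraic closures is automatic, so transitivity and base monotonicity upgrade the hypothesis $a$ independent from $A$ over $\emptyset$ to $tp(a/AC)$ not forking over $C$, and likewise for $b$; stationarity of types over $acl^{eq}(\emptyset)$ in a stable theory (exactly the uniqueness property the paper recalls in Section 3) then forces $tp(a/AC) = tp(b/AC)$, hence $tp(a/A) = tp(b/A)$.

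Part (ii) contains a genuine gap, and it is not merely the ``verification'' you describe at the end. Your plan is to prove amalgamation over a model $M$ (fine: Kim's lemma plus Morley sequences is the Kim--Pillay proof, quoted in Section 3.2 of the paper) and then to rerun that argument with $M$ replaced by $bdd^{heq}(\emptyset)$. That replacement does not go through. The proof over a model exploits features specific to models: types over $M$ have global coheir (finitely satisfiable) extensions, and coheir Morley sequences show that any two realizations of the \emph{same type over $M$} already have the same Lascar strong type over $M$; it is these indiscernible-sequence connections that drive the dividing and consistency arguments. Finite satisfiability is meaningless over $bdd^{heq}(\emptyset)$, and --- this is the crux --- the assertion that two tuples with the same type over $bdd^{heq}(\emptyset)$ are linked by such indiscernible-sequence paths is essentially the theorem that $E_{L} = E_{KP}$ in simple theories ($G$-compactness of simple theories, which the paper records as a separate, nontrivial fact). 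That theorem is itself proved \emph{from} the independence theorem: one first establishes amalgamation for Lascar strong types, where the indiscernible-sequence machinery genuinely does generalize from the model case, then uses it to show that $E_{L}$ is type-definable, whence $E_{L} = E_{KP}$, and only then does the statement for $KP$-strong types follow. Your proposal skips the Lascar strong type intermediary entirely, and ``granting the hyperimaginary calculus'' does not supply it: even in the hyperimaginary framework of Hart--Kim--Pillay and Casanovas, the amalgamation-base property of $bdd(\emptyset)$ is routed through Lascar strong types, not obtained by formally substituting $bdd^{heq}(\emptyset)$ for $M$ in the model-based argument.
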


The Lascar group was first  introduced by Lascar in \cite{Lascar} as a possible obstruction to recovering an $\omega$-categorical theory from its category of models. His original definition of $Gal_{L}(T)$ was as the quotient of $Aut({\bar M})$ by the normal subgroup generated by the fixators of small elementary submodels. $T$ is called ``$G$ compact"  if $Gal_{L}(T) = Gal_{KP}(T)$. Equivalently if Lascar strong types coincide with $KP$-strong types. It was open for a long time whether there were non $G$-compact theories. An example appears in \cite{CLPZ}, and there are now many more examples.

For stable theories all the groups and equivalence relations coincide. Simple theories are $G$-compact, but it is not known whether $KP$-strong types coincide with Shelah strong types. 

A lot of work has been done around how to think of $Gal_{L}(T)$ as a mathematical object. One point of view, which was fairly successful is as an object of   descriptive set theory.  This was already suggested in \cite{CLPZ}. Assume $T$ to be countable, and let $M$ be a countable model, and let $m$ be an enumeration of $M$. Let $S_{m}(M)$ be the space of complete types over $M$ extending $tp(m/\emptyset)$.   Then for $\sigma\in Aut({\bar M})$ the image of $\sigma$ in $Gal_{L}(T)$ depends only on $tp(\sigma(m)/M)$, so there is a canonical surjective map from $S_{m}(M)$ to $Gal_{L}(T)$, which turns out to be obtained by quotienting the Polish space by a Borel, in fact $F_{\sigma}$ equivalence relation. The question is  of the complexity of this equivalence relation from the point of view of the theory of Borel equivalence relations. A conjecture ``smooth implies closed" was made and solved in 
\cite{Kaplan-Miller-Simon} (and later also in \cite{KPR}    using methods of topological dynamics).

\vspace{5mm}
\noindent
I will now discuss another model-theoretic aspect of Galois theory, and that is ``definable automorphism groups", which (as Poizat first observed) subsumes the Galois theory of linear differential equations.  This concrete example of differential Galois theory is not so well-known, and I will leave its discussion to later.

Definable automorphism groups have their origin in work of Zilber (for example \cite{Zilber}) on the fine structure of $\aleph_{1}$-categorical theories, as well as problems about totally categorical theories. But it was put into a general form by Hrushovski. The stable case is given in detail in Chapter 7 of \cite{Pillay-GST}. But the  theory is developed in even greater generality in  \cite{Hrushovski-Bedlewo}.

For simplicity I will stick with the case when the ambient theory $T$ is totally transcendental (i.e. every formula has ordinal valued Morley rank), and I will work with formulas rather than types. 
\begin{Definition} Let $A$ be a small set of parameters, and $P, Q$ be $A$-definable sets (in the ambient saturated model ${\bar M}$). 
$Q$ is said to be $P$-internal, or internal to $P$, if there is some small set $B$ of parameters such that $Q\subseteq dcl(P,B,A)$, i.e. every element of $Q$ is in the definable closure of $A\cup B$ together with some tuple of elements from $P$.
\end{Definition} 

The compactness theorem together with stability yields:
\begin{Lemma}   Suppose $P$, $Q$ are $A$-definable sets and $Q$ is $P$-internal. Then there is a finite tuple $\bar b$ of elements in $Q$ and some $A$-definable function $f(-,-)$ such that for each $a\in Q$ there is a tuple ${\bar c}$ of elements in $P$, such that  $a = f({\bar c}, {\bar b})$. 
\end{Lemma}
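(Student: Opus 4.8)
The plan is to first make the data witnessing internality finite, by a compactness argument, and then to show that these finitely many external parameters are redundant once one is permitted to use elements of $Q$ itself; this is precisely what allows the tuple $\bar b$ to be taken inside $Q$.

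First I would unwind the hypothesis. By $P$-internality there is a small set $B$ with $Q \subseteq dcl(P, B, A)$, so every $a \in Q$ has the form $h(\bar c, \bar d)$ for some $A$-definable partial function $h$, some $\bar c$ from $P$, and some $\bar d$ from $B$. For each such $h$ and each choice of parameters $\bar d$ from $B$ the set $D_h = \{a \in Q : a = h(\bar c, \bar d) \text{ for some } \bar c \in P\}$ is definable over $A \cup B$, and these sets cover $Q$. Since $Q$ is definable, the partial type asserting $x \in Q$ and $x \notin D_h$ for all $h$ is inconsistent, so by compactness finitely many $D_{h_1}, \ldots, D_{h_m}$ already cover $Q$. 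Combining the finitely many functions into a single $A$-definable function $f$ and collecting the finitely many $B$-parameters into one finite tuple $\bar e$, I obtain an $A$-definable $f$ and a finite tuple $\bar e$ with $Q \subseteq \{f(\bar c, \bar e) : \bar c \in P^{<\omega}\}$, where we may assume all the relevant values lie in $Q$.

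The heart of the matter, which I expect to be the real obstacle, is to trade $\bar e$ for parameters from $Q$. Let $e^*$ be the canonical parameter (code in $\bar M^{eq}$) of the $A\bar e$-definable graph $\Gamma = \{(\bar c, a) : a = f(\bar c, \bar e)\} \subseteq P^{<\omega} \times Q$. I claim that $e^* \in dcl(A \cup P \cup Q)$. Indeed, let $\tau$ be any automorphism of $\bar M$ fixing $A \cup P \cup Q$ pointwise. Every point $(\bar c, a) \in \Gamma$ has all of its coordinates in $P \cup Q$ and is therefore fixed by $\tau$; hence $\tau(\Gamma) = \Gamma$ as a set, and so $\tau(e^*) = e^*$. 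As $e^*$ is fixed by every automorphism fixing $A \cup P \cup Q$ pointwise, it lies in $dcl(A \cup P \cup Q)$. By the finite character of definable closure there are then a finite tuple $\bar p$ from $P$ and a finite tuple $\bar b$ from $Q$ with $e^* \in dcl(A \cup \bar p \cup \bar b)$.

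Finally I would assemble the conclusion. Since $e^* \in dcl(A, \bar p, \bar b)$, the function $f(\cdot, \bar e)$ is $A\bar p\bar b$-definable, say $f(\bar c, \bar e) = g(\bar c, \bar p, \bar b)$ for an $A$-definable $g$. Setting $f'((\bar c, \bar p), \bar b) := g(\bar c, \bar p, \bar b)$ gives an $A$-definable function; for each $a \in Q$, choosing $\bar c$ from $P$ with $a = f(\bar c, \bar e)$ yields $a = f'(\bar c', \bar b)$ with $\bar c' = (\bar c, \bar p)$ a tuple from $P$ and the fixed parameter $\bar b$ lying in $Q$, as required. The only genuinely nontrivial point is the containment $e^* \in dcl(A \cup P \cup Q)$: it expresses that the external witnesses to internality carry no information beyond what $P$ and $Q$ already determine together, and it is exactly this fact that permits the coordinatizing parameters to be chosen inside $Q$ (the abstract counterpart of a fundamental system of solutions in differential Galois theory).
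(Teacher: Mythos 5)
Your compactness step and your final assembly (finite character of $dcl$, folding the fixed $P$-tuple $\bar p$ into the function's $P$-argument) are fine. The gap is exactly at the point you single out as the heart of the matter: the inference from ``$e^*$ is fixed by every automorphism fixing $A\cup P\cup Q$ pointwise'' to ``$e^*\in dcl(A\cup P\cup Q)$''. The Galois characterization of definable closure is a theorem about \emph{small} parameter sets: its proof requires realizing the type $tp(e^*/C)\cup\{x\neq e^*\}$ (saturation over $C$) and then conjugating one realization to another (strong homogeneity over $C$), and both fail when $|C|$ reaches the saturation degree of ${\bar M}$. Here $C=A\cup P({\bar M})\cup Q({\bar M})$ contains infinite definable sets of the monster model, so it has full cardinality, and over such sets the inference is simply false in general. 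A concrete counterexample: in the random bipartite graph with parts $P$ and $Q$, distinct elements of $Q$ have distinct neighbourhoods in $P$ (extension axioms), so any automorphism fixing $P({\bar M})$ pointwise fixes every element of $Q$; thus each $q\in Q$ is $Aut({\bar M}/P({\bar M}))$-invariant, yet $q\notin acl(P({\bar M}))$, since $tp(q/{\bar p})$ is non-algebraic for every finite ${\bar p}$ from $P$ and $acl$ has finite character. This is precisely your pattern of reasoning --- every point of a set is fixed, hence the set and its code are fixed, hence (so the claim goes) the code is definable over the big set --- and it breaks.

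This is not a repairable technicality within your argument as written; it is the exact spot where stability must enter, which is why the paper derives the lemma from ``the compactness theorem together with stability'' (and why this section carries the standing assumption that $T$ is totally transcendental). What you need is that every ${\bar M}$-definable subset of $P^{m}\times Q$ --- such as your graph $\Gamma$ --- is definable with parameters from $A\cup P({\bar M})\cup Q({\bar M})$; this is the statement that definable sets are \emph{stably embedded}, which is a theorem in stable theories (a consequence of definability of types) but fails in general, as the bipartite-graph example shows. Your proposal invokes no stability anywhere, which is itself a warning sign: if correct it would prove the lemma for arbitrary theories. Two honest repairs: (a) replace the automorphism argument by the stable-embeddedness fact applied to $\Gamma$, after which your finite-character step and assembly go through verbatim; or (b) argue by forking descent, choosing $a_{i+1}\in Q\setminus acl(A,P,{\bar a}_{\leq i})$ as long as possible --- each such choice forces $tp({\bar e}/A,P,{\bar a}_{\leq i+1})$ to fork over $A,P,{\bar a}_{\leq i}$, hence forces Morley rank to drop, so the process stops after finitely many steps, yielding a finite ${\bar b}$ from $Q$ with $Q\subseteq acl(A,P,{\bar b})$ (one then needs a further standard argument to upgrade $acl$ to $dcl$ and extract the function $f$).
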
 

\begin{Remark} (i) The tuple ${\bar b}$ is called a {\em fundamental system of solutions} (relative to the data $P, Q$).
\newline
(ii) It may very well happen that $P$ = $\emptyset$, in which case $Q$ has to be a finite set and ${\bar b}$ can be taken as an enumeration of $Q$. This is precisely the case in the description of Galois theory at the beginning of this Section, which we call the {\em absolute case}. 
\end{Remark}

In the absolute case we had $Gal(Q/A)$ a finite group. In the more general case we will have $Gal(Q/P,A)$ which will be a {\em definable} group.  There are a few subtleties here, so I will go into a few details, but without proper proofs. 
Basically this definable group $Gal(Q/P,A)$ will have at least two incarnations (which will be definably isomorphic possibly over additional parameters). 
 The {\em intrinsic} version of the Galois group will be an $A$-definable group together with an $A$-definable action on the set $Q$, such that $G$ and the action are isomorphic to the group $Aut(Q/P,A)$ of permutations of $Q$ induced by automorphisms which fix $P$ and $A$ pointwise (equivalently which preserve all formulas with parameters from $P$). This is the group defined in  Section 4 of Chapter 7 of \cite{Pillay-GST}, and the group lives in $Q^{eq}$. 

The {\em extrinsic} Galois group depends on the choice ${\bar b}$ of a fundamental system of solutions: for $\sigma\in 
Aut(Q/P,A)$, $\sigma({\bar b}) = {\bar f}({\bar c}_{1},..,{\bar c}_{k},{\bar b})$, for some ${\bar c}_{1},..,{\bar c}_{k}$ 
from $P$.  This will induce a group operation on  the set of such tuples from $P$ which appear, quotiented by a suitable definable equivalence relation. This group lives on $P^{eq}$ and is definable over  $dcl({\bar b})\cap P^{eq}$,  a finite tuple from $P$. It is isomorphic (by definition) to $Aut(Q/P,A)$, but the action on $Q $ needs the additional parameter ${\bar b}$ to be defined. 

The two  incarnations of $Gal(Q/P,A)$ are definably isomorphic over $(A,{\bar b})$, but we need only  $(A, dcl({\bar b})\cap P^{eq})$ when the group is commutative.

As mentioned earlier, the Galois group also can appear in a couple of ways, as a group of permutations of a definable set, or as a group of permutations of a set of parameters (where we also see the Galois correspondence).  In the absolute case these are the same thing, but not in the ``relative" case.

 So let me discuss this second aspect of the Galois group, as well as the Galois theory.  We work with the notation so far; $P$ and$Q$ are $A$-definable, with $Q$ internal to $P$ and $Gal(Q/P,A)$ has its two incarnations as a definable group.  Remember our assumption that $T$ is totally transcendental, so let $M$ be the prime model over $A$.  (So all types over $A$ realized in $M$ are isolated.) Then using the $A$-definable function $f(-,-)$ we can find a fundamental system of solutions ${\bar b}$ inside $Q(M)$.  We will make the very strong assumption that
\newline
(*) $P(M)\subseteq dcl(A)$. (All elements of $P$ in $M$ are in the definable closure of $A$.) 

Let $G$ be the definable Galois group $Gal(Q/P,A)$ in either of its incarnations.  Let $B = dcl(A,b)$. 
As in ordinary Galois theory we let $Gal(B/A)$ be the group of elementary permutations of $B$ which fix $A$ pointwise.
\begin{Proposition} (i) $G(M)$ identifies with $Gal(B/A)$.
\newline
(ii) There is a Galois correspondence between definable (over $M$) subgroups of $G(M)$ and definably closed sets $C$ such that $A\subseteq C \subseteq B$.

\end{Proposition}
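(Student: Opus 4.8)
The plan is to exhibit explicit maps in both directions between $G(M)$ and $Gal(B/A)$ and check they are mutually inverse isomorphisms, the whole force of hypothesis (*) being that it pins the relevant ``transition'' data down to $A$ and hence makes it visible already in the prime model $M$. I would work with the extrinsic incarnation of $G$, which lives on $P^{eq}$ and is defined over $dcl(\bar b)\cap P^{eq}$; the intrinsic one is recovered by transporting along the definable isomorphism between the two, which is defined over $(A,\bar b)\subseteq B$. Recall from the internality lemma that every $a\in Q$ equals $f(\bar c,\bar b)$ for a tuple $\bar c$ from $P$, and that a member of $Aut(Q/P,A)$ is determined by its effect on $\bar b$; in particular the stabilizer of $\bar b$ in $G(\bar M)=Aut(Q/P,A)$ is trivial and $G(\bar M)$ acts simply transitively on $Q_{\bar b}:=\{\bar b':\bar b'\equiv_{P,A}\bar b\}$. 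For the easy direction take $g\in G(M)$. Since $g\in P^{eq}(M)$ and (*) gives $P(M)\subseteq dcl(A)$, hence $P^{eq}(M)\subseteq dcl^{eq}(A)$, we get $g\in dcl^{eq}(A)$. As $g$ is a point of the binding group it acts on $Q$ as some $\tau\in Aut(Q/P,A)$, so $g\cdot\bar b\equiv_{P,A}\bar b$, and since $g\in dcl^{eq}(A)$ also $g\cdot\bar b\in dcl(A,\bar b)=B$. Thus $\bar b\mapsto g\cdot\bar b$ is an $A$-elementary self-bijection of $B$, i.e. an element $\sigma_g\in Gal(B/A)$; the assignment $g\mapsto\sigma_g$ is a homomorphism, and it is injective because $g\cdot\bar b=\bar b$ sends $g$ into the trivial stabilizer of $\bar b$.

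Surjectivity is the heart of the matter and is where (*) does its real work. Given $\sigma\in Gal(B/A)$, the tuple $\bar b'':=\sigma(\bar b)$ again lies in $Q(M)^{n}$ and satisfies $\bar b''\equiv_A\bar b$; let $g_\sigma\in P^{eq}$ be the transition element recording the coordinates of $\bar b''$ in the basis $\bar b$. Because $\bar b''\in B$ we have $g_\sigma\in dcl(A,\bar b)\cap P^{eq}\subseteq P^{eq}(M)\subseteq dcl^{eq}(A)$, so $g_\sigma$ is an $A$-rational point. It remains to show $g_\sigma\in G$, equivalently $\bar b''\in Q_{\bar b}$, i.e. that $\sigma$ upgrades $\equiv_A$ to $\equiv_{P,A}$. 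Here I would again invoke (*): for $a\in Q(M)$ its coordinates $\bar c_a$ relative to $\bar b$ lie in $P(M)\subseteq dcl(A)$ and so are fixed by any extension of $\sigma$ to $Aut(\bar M/A)$; hence $\sigma$ acts on $Q(M)$ precisely as the coordinate-preserving change of basis $\bar b\mapsto\bar b''$, which is the action of the binding-group element with transition $g_\sigma$. Combined with the description (Chapter 7 of \cite{Pillay-GST}) of $G$ as the $A$-definable group of all such transitions, the fact that $\sigma$ is $A$-elementary forces $g_\sigma$ to satisfy the defining conditions of $G$. I expect this step — promoting sameness of type over $A$ to sameness over $P\cup A$ — to be the main obstacle, and the exact point at which the ``no new $P$-points in $M$'' hypothesis (*) is indispensable (without it $g_\sigma$ need not be $A$-rational and $\sigma(\bar b)$ need not be conjugate to $\bar b$ over all of $P$).

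For (ii) I would run the classical Galois correspondence now that (i) identifies $G(M)$ with the genuine automorphism group $Gal(B/A)$. To a subgroup $H\leq G(M)$ assign its fixed set $Fix(H)=\{d\in B:\sigma d=d\text{ for all }\sigma\in H\}$, a definably closed set with $A\subseteq Fix(H)\subseteq B$; to an intermediate definably closed $C$ assign $Gal(B/C)=\{\sigma\in Gal(B/A):\sigma|_C=\mathrm{id}\}$. The substantive half is $Fix(Gal(B/C))=C$, which I would obtain by the standard argument that any $d\in B\setminus C$ is moved by some $C$-automorphism, using that $B=dcl(A,\bar b)$ is generated over $A$ by the finite tuple $\bar b$ together with the richness of the prime model (so enough automorphisms are available, via part (i), inside $G(M)$). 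For the reverse inclusion $Gal(B/Fix(H))=H$ I would use total transcendence of $T$: definable subgroups of the $A$-definable group $G$ satisfy the descending chain condition and are ``closed'', so $H$ equals the stabilizer of its own fixed set. That the two assignments are inclusion-reversing and mutually inverse then follows formally, yielding the asserted bijection between definable-over-$M$ subgroups of $G(M)$ and definably closed sets between $A$ and $B$.
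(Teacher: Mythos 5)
Two preliminary remarks. First, the paper itself contains no proof of this proposition: immediately after the statement it says ``See \cite{Leon-Sanchez-Pillay} for a generalization of this, as well as proofs'', so your attempt can only be judged on its own merits, not against an in-paper argument. Second, your skeleton for (i) --- an injective homomorphism $g\mapsto\sigma_g$ from $G(M)$ to $Gal(B/A)$ defined by the action on $\bar b$, then surjectivity via transition elements --- is the right one, and the easy direction is essentially correct (one wrinkle: with $\sigma_g(u(\bar b)):=u(g\cdot\bar b)$ one gets $\sigma_{g_1}\circ\sigma_{g_2}=\sigma_{g_2g_1}$, an \emph{anti}-homomorphism; harmless, fix by $g\mapsto\sigma_{g^{-1}}$). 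The genuine gap is exactly at the step you yourself flag as the heart of the matter. By definition of the binding group, $g_\sigma\in G$ means that $\sigma(\bar b)$ realizes $tp(\bar b/P(\bar M)\cup A)$ --- the type over \emph{all} points of $P$ in the monster model, not just those in $M$. Your argument only uses that an extension $\hat\sigma\in Aut(\bar M/A)$ of $\sigma$ fixes the coordinates lying in $P(M)$; but by (*) those coordinates lie in $dcl(A)$, so this is no information beyond $\hat\sigma$ being an automorphism over $A$, and it says nothing about formulas with parameters from $P(\bar M)\setminus M$. Hence the sentence ``the fact that $\sigma$ is $A$-elementary forces $g_\sigma$ to satisfy the defining conditions of $G$'' asserts precisely what has to be proved; worse, $g_\sigma$ is only a well-defined element of the extrinsic group once one already knows $\sigma(\bar b)$ lies in the $Aut(\bar M/P,A)$-orbit of $\bar b$. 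What is missing is the stability-theoretic input that promotes $\equiv_A$ to $\equiv_{P(\bar M)\cup A}$. For instance: for any tuple $\bar e$ from $P(\bar M)$, $tp(\bar e/M)$ is definable over $dcl^{eq}(A)$, because its canonical base lies both in $dcl^{eq}(M)$ and (via a Morley sequence of realizations, which are $P$-tuples) in $dcl^{eq}(A\cup P(\bar M))$, and any imaginary in $M^{eq}\cap dcl^{eq}(A\cup P(\bar M))$ has, by elementarity of $M\prec\bar M$, a representative with coordinates in $P(M)$, hence lies in $dcl^{eq}(A)$ by (*). Reading off $\phi$-definitions then shows $\bar b''\equiv_A\bar b$ implies $\bar b''\equiv_{A\cup P(\bar M)}\bar b$. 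Some argument of this kind (canonical bases, or stable embeddedness of $P$) is indispensable, and this is where total transcendence and (*) actually do their work; without it the surjectivity proof is circular.

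Part (ii) as sketched has two further holes. For $Fix(Gal(B/C))=C$: homogeneity of $\bar M$ gives an automorphism over $C$ moving a given $d\in B\setminus C$, but there is no reason such an automorphism carries $B$ to itself, so it need not yield an element of $Gal(B/C)$; ``richness of the prime model'' does not bridge this. The correct route is to re-run part (i) with $C$ as base (checking the hypotheses persist: $M$ is prime over $C$, (*) holds over $C$, and $B=dcl(C,\bar b)$), and then, writing $d=h(\bar b)$ with $h$ a $C$-definable function and using that $tp(\bar b/C)$ is isolated, transfer the statement ``$h$ is constant on the realizations of $tp(\bar b/C)$ in $M$'' to all of $\bar M$ by elementarity, concluding $d\in dcl(C)=C$. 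For $Gal(B/Fix(H))=H$, the appeal to the DCC does no work at all; the standard argument is Artin-style: the code $e\in B^{eq}$ of the definable set $H\cdot\bar b$ is fixed by every $\sigma_h$ with $h\in H$, while $\sigma_g(H\cdot\bar b)=Hg\cdot\bar b\neq H\cdot\bar b$ for $g\notin H$ by simple transitivity of the action on the orbit of $\bar b$, so $e$ separates $H$ from any strictly larger subgroup.
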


See \cite{Leon-Sanchez-Pillay} for a generalization of this, as well as proofs.

\vspace{5mm}
\noindent
Bruno Poizat was maybe the first to observe (in his seminal paper \cite{Poizat-imaginaries}), that the theory we have just described subsumes the Picard-Vessiot theory. Let me describe briefly the latter, to finish this Section of the article. 
Let us fix a differential field $(K, \partial)$ with algebraically closed field $C_{K}$ of constants (all in characteristic $0$). Let $\partial Y = AY$ be a linear differential equation over $K$ in vector form, namely $Y$ is a $n\times 1$ column vector of indeterminates and $A$ is an $n\times n$ matrix over $K$. 
An easy computation, plus differential algebra, yields a few basic facts:
\begin{Fact} (i) For any differential field $(L,\partial)\supseteq (K,\partial)$, the solution set $V(L)$ of $\partial Y = AY$ in $L$ is a vector space over $C_{L}$ of dimension $\leq n$.
\newline
(ii)  There is such a differential field $(L,\partial)$ such that $C_{L} = C_{K}$ and the $C_{L}$-dimension of $V(L)$ is precisely $n$.
\end{Fact}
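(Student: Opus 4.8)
The plan is to handle the two parts by separate arguments: part (i) is an elementary computation with the derivation, while part (ii) is the existence of a Picard--Vessiot extension, whose only genuinely delicate ingredient is the control of the constant field.

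For (i), I would first check that $V(L)$ is a $C_{L}$-subspace: if $\partial y=Ay$, $\partial z=Az$ and $c,d\in C_{L}$, then $\partial(cy+dz)=c\,\partial y+d\,\partial z=A(cy+dz)$ since $\partial c=\partial d=0$. The dimension bound comes from the key observation that, for solutions, $C_{L}$-linear independence coincides with $L$-linear independence. One direction is trivial, as $C_{L}\subseteq L$. For the converse, take solutions $y_{1},\dots ,y_{m}$ with a shortest nontrivial $L$-relation $\sum_{i=1}^{k}a_{i}y_{i}=0$, normalized so that $a_{k}=1$, and apply $\partial$; because $\sum_{i}a_{i}\,\partial y_{i}=A\sum_{i}a_{i}y_{i}=0$, one is left with $\sum_{i=1}^{k}(\partial a_{i})\,y_{i}=0$, a strictly shorter relation since $\partial a_{k}=0$. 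Minimality forces every $\partial a_{i}=0$, i.e.\ all $a_{i}\in C_{L}$, so the relation was already over $C_{L}$. Hence a $C_{L}$-independent family of solutions is $L$-independent, and so has at most $n$ members because $V(L)\subseteq L^{n}$; this gives $\dim_{C_{L}}V(L)\le n$.

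For (ii), I would construct $L$ by the standard differential-algebra route. Let $X=(X_{ij})$ be an $n\times n$ matrix of indeterminates, form the differential ring $R_{0}=K[X_{ij}][\det(X)^{-1}]$, and extend $\partial$ from $K$ by $\partial X=AX$; this is consistent with inverting $\det X$ because $\partial(\det X)=\operatorname{tr}(A)\cdot\det X$. Pick a maximal differential ideal $\mathfrak{m}\subseteq R_{0}$ (Zorn), and set $R=R_{0}/\mathfrak{m}$, a simple differential ring finitely generated over $K$. Since $R$ is Noetherian and contains $\mathbb{Q}$, its minimal primes are differential ideals, so simplicity makes $R$ an integral domain; put $L=\operatorname{Frac}(R)$. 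The image $\bar X$ of $X$ is invertible in $R$, so its $n$ columns are solutions of $\partial Y=AY$ that are $L$-independent, hence $C_{L}$-independent; combined with (i) this already yields $\dim_{C_{L}}V(L)=n$.

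The remaining, and main, obstacle is to show $C_{L}=C_{K}$, which is exactly where algebraic closedness of $C_{K}$ is used. First, in a simple differential ring the constants form a field: a nonzero constant $c$ generates a nonzero differential ideal $Rc=R$, so $c$ is a unit with $\partial(c^{-1})=0$. Next, $C_{L}=C_{R}$, because for $c\in C_{L}$ the ideal of denominators $\{r\in R:rc\in R\}$ is a nonzero differential ideal (if $rc\in R$ then $\partial(rc)=(\partial r)c\in R$), hence all of $R$, so $c\in R$. Finally I would argue $C_{R}=C_{K}$: a constant algebraic over $K$ already has its minimal polynomial defined over $C_{K}$ (apply $\partial$ to the minimal equation and use minimality), so it is algebraic over $C_{K}$ and therefore lies in $C_{K}$ by algebraic closedness; and a constant $c$ transcendental over $C_{K}$ is impossible, since then $c$ is transcendental over $K$ and each $c-\lambda$ ($\lambda\in C_{K}$) is a unit of $R$, forcing the dominant finite-type map $\operatorname{Spec}R\to\mathbb{A}^{1}_{K}=\operatorname{Spec}K[c]$ to avoid the infinitely many closed points $c=\lambda$, contradicting that its image, being constructible and dense by Chevalley, omits only finitely many of them. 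This establishes $C_{L}=C_{K}$ together with a full $n$-dimensional solution space, completing (ii).
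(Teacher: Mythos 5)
Your proof is correct: part (i) is the standard minimal-length-relation argument, and part (ii) is the classical Kolchin/Picard--Vessiot existence proof (generic fundamental matrix, maximal differential ideal, a simple differential ring is a domain whose constants descend from its fraction field, with algebraic closedness of $C_K$ used exactly twice --- to pull $K$-algebraic constants into $C_K$, and, via infinitude of $C_K$, in the Chevalley argument excluding transcendental constants). But this is a genuinely different route from the one the paper indicates: the paper states the Fact without proof (``an easy computation, plus differential algebra''), and the argument it actually sketches, Poizat's, handles what you rightly call the main obstacle --- control of the constants --- by the model theory of $DCF_0$ instead of by differential algebra. For that route one needs only the weak form of (ii), with no condition on $C_L$, and this requires none of your ideal theory: $R_0=K[X_{ij}][\det(X)^{-1}]$ is already an integral domain, so its fraction field carries a fundamental matrix. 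Then, since the fundamental matrices form a nonempty $K$-definable set and $DCF_0$ is $\omega$-stable (so every consistent formula over $K$ is realized in the prime model), such a matrix $Z$ exists already in the differential closure $K^{diff}$ of $K$; and $C_{K^{diff}}=C_K$ because the constants form a strongly minimal set, so every constant of the prime model is field-algebraic over $K$, hence algebraic over $C_K$ (your own minimal-polynomial observation), hence in $C_K$ by algebraic closedness. Taking $L=K(Z)\subseteq K^{diff}$ then gives (ii). The trade-off: your argument is self-contained differential algebra, needing only Zorn's lemma, the characteristic-$0$ fact that minimal primes of a differential $\mathbb{Q}$-algebra are differential, and Chevalley's theorem, with no model theory at all; the paper's route presupposes the $DCF_0$ machinery (prime models, strong minimality of the constants) but then gets the delicate constants statement almost for free, and it places the Picard--Vessiot extension canonically inside $K^{diff}$, which is what the rest of the paper's Section 5 exploits.
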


In the context of Lemma 5.6(ii) let $Z$ be an $n\times n$ matrix  over $L$ whose columns form a basis of $V(L)$ over $C_{L} = C_{K}$. This is equivalent to asking that $Z$ be nonsingular and its columns are members of $V(L)$. 
The differential field $K(Z)$ generated over $K$ by the coordinates of $Z$ is what is called a Picard-Vessiot extension of $K$ for the equation $\partial Y = AY$. Let $Aut_{\partial}(K(Z)/K)$ be the group of automorphisms of the differential field $K(Z)$ which fix $K$ pointwise. For $\sigma\in Aut_{\partial}(K(Z)/K)$, let $c_{\sigma}\in GL_{n}(C_{K})$ be such that  $\sigma(Z) = Zc_{\sigma}$. Then the map taking $\sigma \to c_{\sigma}$ is an isomorphism of $Aut_{\partial}(K(Z)/K)$ with an algebraic subgroup $G$ of $GL_{n}(C_{K})$, and there is a Galois correspondence between algebraic subgroups of $G$ and intermediate differential fields between $K$ and $K(Z)$.

Poizat's approach is to work in $DCF_{0}$. Once one knows that there is some differential field extension $L$ of $K$ where $V(L)$ has dimension $n$ over $C_{L}$ then we can find such $Z$ inside the prime model $K^{diff}$ over $K$. Moreover the constants being strongly minimal and $C_{K}$ being algebraically closed it follows that $C_{K} = C_{K^{diff}}$. So taking $P$ to be the definable set ${\mathcal C}$ of constants and $Q$ to be the formula $\partial Y = AY$, and $A = K$,  Proposition 5.5 applies to get the above results. The subgroup $G$ of $GL_{n}(C_{L})$ in the  $PV$-theory is precisely the group of $K^{diff}$ points of the extrinsic Galois group (living in $P = {\cal C}$). 

Actually all this generalizes to Kochin's strongly normal theory concerning any definable sets internal to the constants, and Poizat's observations inspired a differential Galois theory properly extending the strongly normal theory (\cite{Pillay-DGTIV}), and relevant to new problems in functional transcendence (\cite{BP}).

\section{Topological dynamics}
Topological dynamics, in its abstract version,  is the study of topological  groups via their actions on compact (Hausdorff) spaces. Notions such as amenability, (universal) minimal flow, distal flow, and weak mixing, come from this subject.  Rather than give a series of definitions of the various notions, I will introduce them in the text, when and where relevant.  In general \cite{Glasner} will be our reference for topological dynamics.

Ludomir Newelski  \cite{Newelski} had the insights that (i) the fundamental theorem of stable group theory can be interpreted as a topological dynamical statement, and (ii) topological dynamics introduces notions and techniques which are useful in generalizing stable group theory outside the context of stable theories. 

We will focus in this section on definable groups, but mention briefly automorphism groups. 

Among the themes I will mention are the connections between the various quotients such as $G/G_{M}^{00}$ and topological dynamical invariants (which in hindsight is just a comparison of various kinds of compactifications) as well as consequences of definable amenability. 

By an action  of a topological group $G$ on a compact space $X$ we mean a group action $\alpha: G\times X \to X$ which is also continuous. When $G$ is discrete, the continuity condition just means that for each $g\in G$, $\alpha(g,-)$ is a homeomorphism of $X$.  We tend to restrict our attention here to the case where the group $G$ is discrete.  We often call the pair $(G,X)$ a $G$-flow.

What exactly is meant by Newelski's observations (i) and (ii)?  First we are considering a group $G$ (or $G(M)$) definable in a structure $M$, as a discrete group.  There is an obvious action of $G(M)$ on a certain compact space, namely on the space $S_{G}(M)$ of complete types over $M$ concentrating on $G$, where the action is by translation: $gp = tp(ga/M)$ where $a$ realizes $p$ (in $G({\bar M})$).  And this action is of course by homeomorphisms.  To deal with (i): when $Th(M)$ is stable, then the collection of generic types over $M$ (see Section 3.1) is the unique minimal closed invariant subset of $S_{G}(M)$, and has the structure of a compact (profinite) group, naturally isomorphic to $G/G^{0}$. 
This minimal $G(M)$-flow (and its quotients) is called an equicontinuous flow in the dynamics literature.
For (ii): in the not necessarily stable environment,  and given the action of $G(M)$ on $S_{G}(M)$, for example,  the study of  minimal subflows of this action (which exist) can give interesting model theoretic information about $G$, generalizing notions from stable group theory.  Actually we already have generalizations of stable group theory to simple theories and $NIP$ theories, and the second fits very well into the topological dynamics point of view (but not so much the first). 

The whole theory is really about so-called $G$-ambits, namely a group action $G$ on $X$ with a distinguished dense orbit, more precisely with a dinstinguished point $x_{0}\in X$ whose $G$-orbit is dense in $X$. 
In the case of $G(M)$ acting on $S_{G}(M)$ the dense orbit is just $G(M)$ itself (identified with the set of types in $S_{G}(M)$ realized in $M$), so $x_{0}$ is taken as the identity of $G$.

Important flows are minimal flows $(G,X)$ where {\em every}  orbit is dense.  We have already discussed above the Bohr compactification of a (discrete) group $G$, as a compact (Hausdorff) group $bG$ together with a homomorphism $G\to b(G)$ with dense image, and which is universal such.  Note that $b(G)$ is also a $G$-flow where the group action of $G$ on it is via left multiplication of the image of $G$ inside $bG$. Moreover $bG$ is minimal. 
In any case, $G$ will have a universal minimal flow (unique up to isomomorphism) which is again an important invariant of $G$.

An important aspect of (abstract) topological dynamics is the construction of the ``Ellis semigroup" attached to a group action.  This is a related group action of $G$ where the space on which the group acts has a multiplicative (semigroup) structure, continuous in the first coordinate.  The formal construction is as follows. We start with an action  of a (discrete) group $G$ on a space $X$. For each $g\in G$ we have the corresponding homeomorphism $\pi_{g}\in X^{X}$. The closure of $\{\pi_{g}:g\in G\}$ in $X^{X}$ (with the Tychonoff topology) is precisely $E(X)$, the Ellis semigroup of the action of $G$ on $X$.  $G$ also acts by homeomorphisms on the compact space $E(X)$, by composition of functions:  for $f\in E(X)$, and $g\in G$, $g(f) = \pi_{g}\circ f$. Moreover $E(X)$ is a $G$-ambit, where the orbit of the identity function is precisely $\{pi_{g}:g\in G\}$ and  is dense by definition.  Moreover $E(E(X))$ is naturally isomorphic to $E(X)$ as a $G$-flow. 
The semigroup structure $*$ on $E(X)$ is just composition of maps, and is continuous in the first coordinate.  Minimal subflows of $(G,E(X))$ coincide with minimal left ideals of the semigroup $E(X)$ and are basic invariants of the original flow $(G,X)$. 
Let ${\cal M}$ be such a minimal subflow (minimal closed $G$-invariant subset of $E(X)$) and let $u$ be an idempotent in ${\cal M}$. Then $u*{\cal M}$ is actually a subgroup of the semigroup $E(X)$, which we will here call the {\em Ellis group} attached to the original flow $(G,X)$,  which is unique up to isomorphism as an abstract group.  (This is not normal terminology in topological dynamics, but has become usual in model theory, following Newelski.)   In fact this Ellis group coincides (via acting on the right by $*$) with the group of automorphisms of the $G$-flow ${\cal M}$.  The so-called $\tau$-topology on $u*{\cal M}$ can be defined via this ``Galois action":  To each $g\in u*{\cal M}$, we have its graph $\Gamma(g) = \{(x, x*g):x\in {\cal M}\}\subseteq {\cal M}\times {\cal M}$. Then for a subset $A$ of $u*{\cal M}$ and $g\in u*{\cal M}$ we say that $g\in cl_{\tau}(A)$ if $\Gamma(g)$ is contained in the closure in ${\cal M}\times {\cal M}$ of the union of the $\Gamma(a)$ for $a\in A$.  

From  this definition of $cl_{\tau}(-)$ we obtain the $\tau$-topology on the ``Ellis group" $u*{\cal M}$. The topology is $T_{1}$ and (quasi) compact and the group operation $*$ is separately  continuous.  The intersection of the $\tau$-closures of $\tau$-neighbouhoods of the identity in $u*{\cal M}$ is a $\tau$-closed normal subgroup $H$ of $u*{\cal M}$, and the quotient 
$(u*{\cal M})/H$ is a compact Hausdorff topological group. 

What exactly is canonical about this construction which begins with an action of $G$ on $X$?  Well it turns out that there is a {\em universal} $G$-ambit, namely every other $G$-ambit is uniquely an image of the universal one.  Moreover the universal minimal flow  of $G$   corresponds to some/any minimal subflow of the universal ambit.  

If $X$ is the universal $G$-ambit then $X$ is naturally isomorphic to its Ellis semigroup, so is equipped with a semigroup structure $*$. 
Let ${\cal M}$ be a minimal subflow of $X$ and $u$ an idempotent. So the group $(u*{\cal M}, *)$ is an invariant of $G$, even as a (not necessarily Hausdorff) topological group,  and I will call it the Ellis group of $G$ (which is not usual terminology in dynamics). Moreover the compact Hausdorff group $(u*{\cal M})/H$ is called the {\em generalized Bohr compactification} of $G$.  The terminology is due to Glasner although the group seems to have been defined originally by Ellis.

The point is that for groups such as $SL(2,\R)$  (as a discrete or topological group), the Bohr compactification is trivial, but not the generalized Bohr compactification.

Other notions attached to the topological  dynamical study of (say discrete) groups are {\em amenability}: every $G$-flow $X$ supports a $G$-invariant Borel probability measure. Extreme amenability of $G$ means that every such action has a fixed point (so a Dirac, as a measure). 

\vspace{5mm}
\noindent
What is the connection with model theory?  Well given a group $G$ definable in/over a model $M$ we have the action of $G(M)$ on the space $S_{G}(M)$, and we can simply use the machinery above to get information about this action, such as passing to the Ellis semigroup $E(S_{G}(M))$ etc. 

On the other hand, we could try to formulate analogues of notions around actions of discrete (even nondiscrete) groups on compact spaces but in a suitable ``definable" category. This has been done or at least attempted in several papers, although not always with consistent notation. 
One interesting point with many of these model-theoretic analogues is that taking  set theory as our theory $T$ i.e. the theory of the standard model $({\mathbb V},\epsilon)$),  and viewing group $G$ as a group definable in this standard model,  we recover the topological dynamics of the discrete group $G$. 

As an example, take a discrete group $G$ as a group definable in $M = ({\mathbb V}, \epsilon)$. The universal $G$-ambit is well known to be  $\beta G$, also known as the Stone-Cech compactification of $G$, explicitly the space of ultrafilters on the Boolean algebra of all subsets of $G$. But this is precisely the space of types $S_{G}(M)$.

We now revert to the general model theoretic context of a complete theory $T$, monster model ${\bar M}$ and $\emptyset$-definable group $G$. Unless stated otherwise we consider $G(M)$ as a discrete group.
\begin{Proposition} (i) Consider the flow $(G(M), S_{G}(M))$. Let $N$ be an $|M|^{+}$-saturated model containing $M$, then the Ellis semigroup of the action is an image, as both a $G(M)$-flow and semigroup of  the space $S_{G,M}(N)$ of types in $S_{G}(N)$ which are finitely satisfiable in $M$ (with the natural action of $G(M)$). 
\newline
(ii) If all types over $M$ are definable, then $S_{G}(M)$ equals $S_{G,M}(N)$, and $S_{G}(M) = E(S_{G}(M))$. 
\end{Proposition}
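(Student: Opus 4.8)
The plan is to produce an explicit continuous surjection $\Phi$ from $S_{G,M}(N)$ onto the Ellis semigroup $E(S_G(M))$ and to verify that it respects both the $G(M)$-action and the semigroup multiplication. Throughout I will use the $|M|^+$-saturation of $N$, which guarantees that every type over $M$ is realized in $N$; this is exactly what makes coheir (finitely-satisfiable-in-$M$) extensions available. First I would equip $S_{G,M}(N)$ with its own structure. Since each $q\in S_{G,M}(N)$ is finitely satisfiable in $M$ it does not split over $M$, so for any tuple $a$ from the monster there is a canonical $M$-invariant extension $q|Na$, still finitely satisfiable in $M$, given by $\phi(x,a)\in q|Na$ iff $\phi(x,a')\in q$ for $a'\in N$ with $a'\equiv_M a$ (well defined by non-splitting, and $a'$ exists by saturation). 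Using this I define the coheir product $q_1*q_2=tp(b_1b_2/N)$, where $b_2\models q_2$ and $b_1\models q_1|Nb_2$; one checks $q_1*q_2\in S_{G,M}(N)$, that $*$ is associative and continuous in the first coordinate, and that $g\mapsto(q\mapsto tp(gb/N))$ embeds $G(M)$, making $S_{G,M}(N)$ a $G(M)$-ambit with base point $tp(e/N)$ whose dense orbit $\{tp(g/N):g\in G(M)\}$ is dense simply by finite satisfiability.

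Next I would define $\Phi(q)=f_q\in S_G(M)^{S_G(M)}$ by $f_q(p)=tp(ba/M)$, where $a\models p$ and $b\models q|Na$. One verifies $f_q$ is well defined, independent of the choices of $a$ and $b$, from the $M$-invariance of the coheir extension, and that $\Phi$ is continuous into the Tychonoff space $X^X$: for fixed $p$ and a formula $\phi$ over $M$, whether $\phi\in f_q(p)$ is controlled by a single formula of $q$, so $\{q:\phi\in f_q(p)\}$ is clopen in $S_{G,M}(N)$. A direct computation gives $\Phi(tp(g/N))=\pi_g$, so $\Phi$ carries the dense set $\{tp(g/N)\}$ onto $\{\pi_g\}$; as $\Phi$ is continuous and $S_{G,M}(N)$ is compact, the image is closed and contains $\{\pi_g\}$, hence equals its closure $E(X)$. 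This gives surjectivity. Equivariance, $\Phi(g\cdot q)=\pi_g\circ\Phi(q)$, is a further direct computation using $g\in M$.

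The crux is that $\Phi$ is a semigroup homomorphism, $\Phi(q_1*q_2)=\Phi(q_1)\circ\Phi(q_2)$. Evaluating the composite at a fixed $p$ yields $tp(b_1(b_2a)/M)$, with $b_2\models q_2|Na$ and $b_1\models q_1|N(b_2a)$, while $\Phi(q_1*q_2)$ yields $tp((b_1b_2)a/M)$ for a realization of the coheir product. Associativity of the group operation, $b_1(b_2a)=(b_1b_2)a$, together with a careful check that the nested invariant extensions occurring on the two sides are literally the same type over $M$, gives the equality. I expect this compatibility of the coheir product with composition, and the bookkeeping of the various finitely-satisfiable-in-$M$ extensions through the two realizations $b_2$ and $a$, to be the main obstacle; everything else is formal.

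Finally, for (ii), assume all types over $M$ are definable. Then each $p\in S_G(M)$ has a \emph{unique} extension to $N$ that is finitely satisfiable in $M$ (uniqueness of coheir extensions being what this hypothesis supplies, exactly as in stable theories, where coheir, heir and nonforking extension coincide), so the restriction $\rho:S_{G,M}(N)\to S_G(M)$, $q\mapsto q|M$, is a bijection; this is the asserted equality $S_G(M)=S_{G,M}(N)$. Now compose $\Phi$ with the evaluation-at-base-point map $\mathrm{ev}:E(X)\to X$, $f\mapsto f(tp(e/M))$: one computes $\mathrm{ev}(\Phi(q))=f_q(tp(e/M))=tp(b/M)=q|M=\rho(q)$, so $\mathrm{ev}\circ\Phi=\rho$ is a bijection. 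Since $\Phi$ is already surjective by (i), it is forced to be bijective, hence an isomorphism of flows and semigroups, giving $E(S_G(M))\cong S_{G,M}(N)\cong S_G(M)$, i.e. $S_G(M)=E(S_G(M))$.
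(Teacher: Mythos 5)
Your proposal is correct and takes essentially the same route as the paper: the same assignment $q\mapsto f_q$ (the paper's $f(p)=tp(g'a/M)$ with $p$ realized by some $a\in G(N)$), the same coheir product on $S_{G,M}(N)$ built from the unique finitely-satisfiable-in-$M$ extensions, and for (ii) the same point that definability of all types over $M$ forces uniqueness of such extensions, giving the homeomorphism $S_G(M)\cong S_{G,M}(N)$. The only cosmetic difference is that you get that the image of $\Phi$ is exactly $E(S_G(M))$ via continuity, compactness and density of $\{tp(g/N):g\in G(M)\}$ in $S_{G,M}(N)$, rather than the paper's direct characterization of the closure of $\{\pi_g:g\in G(M)\}$ in terms of finitely satisfiable types; in fact you supply the verifications (well-definedness, equivariance, the homomorphism computation) that the paper explicitly leaves to the reader.
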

\begin{proof} (i)  The main point is that  $f:S_{G}(M) \to S_{G}(M)$ is in the closure of the set of functions $\pi_{g}$ for $g\in G(M)$ (where $\pi(g)(p)$ is precisely $gp$) if and only if there is some $p'\in S_{G,M}(N)$ such that for any $p\in S_{G}(M)$, some/any realization $a$ of $p$ in $G(N)$, and some/any realization $g'$ of $p'$ (in $G({\bar M})$, $f(p) = tp(g'a/M)$.
This is basically the connection between  closures of families of functions and  finitely satisfiable types, and is an important aspect of the connection between function theory (or functional analysis) and model theory.  The rest is left to the reader, although it is worth describing the semigroup structure on $S_{G,M}(N)$.  Let $p,q\in S_{G,M}(N)$. Let $b$ realize $q$. Then $p$ has a unique extension  $p'$ over $N,b$ which is finitely satisfiable in $M$. Let $a$ realize $p'$. Then $p*q = tp(ab/N)$, which  {\em is} finitely satisfiable in $M$, and depends only on $p$ and $q$. 
\newline
(ii) If all types over $M$ are definable, then every complete type $p$ over $M$ has a unique extension $p'$  over $N$ which is finitely satisfiable in $M$.  This gives a homeomorphism between $S_{G}(M)$ and $S_{G,M}(N)$.  In fact much less than the definability of all types over $M$ is required to get the conclusion here, but never mind. 
\end{proof}

Actually the space $S_{G,M}(N)$ can be accessed directly as follows:
\begin{Definition}  A subset of $G(M)$ is said to be {\em externally definable} if it is of the form $X\cap G(M)$ where $X$ is a definable, maybe with parameters, subset of $G = G({\bar M})$.
\end{Definition} 
The collection of  externally definable subsets of $G(M)$ is clearly a Boolean algebra, and we let $S_{G,ext}(M)$ denote its space of ultrafilters (on which $G(M)$ acts). 
\begin{Lemma}  $S_{G,ext}(M)$ is isomorphic, as a $G(M)$-flow, to $S_{G,M}(N)$. 
\end{Lemma}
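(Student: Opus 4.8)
The plan is to realize both flows as Stone spaces of Boolean algebras and to exhibit a natural surjection of Boolean algebras whose Stone dual embeds $S_{G,ext}(M)$ onto exactly $S_{G,M}(N)$. Write $B_N$ for the Boolean algebra of $N$-definable (with parameters from $N$) subsets of $G = G(\bar M)$, identified up to having the same points in $G(N)$; its Stone space is $S_{G}(N)$. Let $B_{ext}$ be the Boolean algebra of externally definable subsets of $G(M)$, so that $S_{G,ext}(M)$ is by definition its Stone space. Consider the restriction map $r:B_N \to B_{ext}$ sending an $N$-definable set $\phi(N,d)$ to its trace $\phi(N,d)\cap G(M)$. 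This is clearly a Boolean homomorphism, and it respects the $G(M)$-action by left translation, since $g\bigl(\phi(N,d)\cap G(M)\bigr) = \bigl(g\phi(N,d)\bigr)\cap G(M)$ and $g\phi(N,d)$ is again $N$-definable (as $g\in G(M)\subseteq N$).

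First I would check that $r$ is surjective; this is the step that uses the saturation of $N$ and is the heart of the matter. An externally definable set has the form $\phi(\bar M,c)\cap G(M)$ with $c$ a finite tuple from the monster, and its trace on $G(M)$ depends only on $tp(c/M)$. Since $N$ is $|M|^{+}$-saturated it realizes $tp(c/M)$, say by $c'\in N$, and then $\phi(a,c)\leftrightarrow\phi(a,c')$ for all $a\in G(M)$, so $\phi(\bar M,c)\cap G(M) = \phi(N,c')\cap G(M) = r(\phi(N,c'))$. Hence every externally definable set lies in the image of $r$.

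Next I would identify the kernel of $r$: it is the ideal $I$ of $N$-definable subsets of $G$ whose $G(N)$-realizations are disjoint from $G(M)$. Thus $B_{ext}\cong B_N/I$, and by Stone duality $S_{G,ext}(M)$ is homeomorphic to the closed subspace of $S_{G}(N)$ consisting of those complete types $p$ disjoint from $I$, i.e. containing no $N$-definable set disjoint from $G(M)$. But a type $p\in S_G(N)$ contains no definable set disjoint from $G(M)$ precisely when every formula in $p$ has a realization in $M$, i.e. when $p$ is finitely satisfiable in $M$. Therefore the Stone dual $r^{*}$ is a homeomorphism of $S_{G,ext}(M)$ onto $S_{G,M}(N) = \{p\in S_G(N): p \text{ finitely satisfiable in } M\}$.

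Finally, equivariance is routine: since $r$ intertwines the translation actions on $B_N$ and $B_{ext}$, its Stone dual intertwines the induced actions, which are left translation $p\mapsto gp$ on $S_{G,M}(N)$ and the induced action on ultrafilters on $S_{G,ext}(M)$. As $r^{*}$ is already a homeomorphism of compact Hausdorff spaces and is $G(M)$-equivariant, it is an isomorphism of $G(M)$-flows, which is what we want. The main obstacle is the surjectivity of $r$ — equivalently, the well-definedness of moving between external definability over $M$ and genuine definability over $N$ — where the saturation of $N$ and finite satisfiability do all the work; the remainder is Stone duality and bookkeeping of the translation action.
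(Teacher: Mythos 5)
Your proof is correct. Note that the paper itself gives no proof of this lemma --- it is stated as a standard identification, immediately after Definition 6.2, and is implicitly relied upon in the discussion surrounding Proposition 6.1 --- so there is no line-by-line comparison to make; your argument is a legitimate and complete way of filling in what the paper leaves to the reader. The three components of your argument are each sound: the trace map $r$ from the Boolean algebra of $N$-definable subsets of $G$ to the algebra of externally definable subsets of $G(M)$ is a well-defined, $G(M)$-equivariant Boolean homomorphism; its surjectivity is exactly where the $|M|^{+}$-saturation of $N$ is needed, via the observation that the trace of $\phi(x,c)$ on $G(M)$ depends only on $tp(c/M)$, which is realized in $N$; and the kernel is the ideal $I$ of $N$-definable sets disjoint from $G(M)$, so Stone duality identifies $S_{G,ext}(M)$ with the closed subspace of $S_{G}(N)$ of types avoiding $I$, which (using closure of a complete type under finite conjunctions, and the fact that all sets in the algebra are subsets of $G$, so realizations in $M$ are automatically in $G(M)$) is precisely $S_{G,M}(N)$. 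Your final equivariance check is also right: the translation flow $gp = tp(ga/N)$ on $S_{G,M}(N)$ is dual to the algebra action $D \mapsto gD$, and likewise on the external side, so the Stone dual of the equivariant surjection $r$ is an isomorphism of $G(M)$-flows. This is the same mechanism (finite satisfiability plus saturation of $N$) that the paper invokes in its sketch of Proposition 6.1(i), so your proof is not merely correct but also consonant with the paper's point of view.
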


If all types over $M$ are definable, then of course externally definable subsets of $G(M)$ coincide with definable subsets, and
everything fits together. 

\begin{Definition} (i) A map $f$ from $G(M)$ to a compact space $X$ is said to be definable  if it factors through a continuous function from $S_{G}(M)$ to $X$.
Likewise $f$ is to be externally definable if it factors through a continuous function from $S_{G,ext}(M)$ to $X$.
\newline 
(ii) An action of $G(M)$ on a compact set $X$ (by homeomorphisms) is said to be definable if for each $x\in X$, the map taking $g\in G(M)$ to $gx\in X$ is a definable map from $G$ to $X$.
Likewise for the notion of the action being externally definable. 
\end{Definition}

Let me remark that the notion of a ``definable action" of a definable group on a compact space is supposed to be some kind of analogue of a (jointly) continuous action of a topological group on a compact space. But maybe we did not yet find the correct analogue.  Anyway with these notions we obtain some kind of canonicity or universality of the action of $G(M)$ on the various type spaces:
\begin{Proposition} (i) The action of $G(M)$ on $S_{G,ext}(M)$ is externally definable, and moreover with $x_{0}$ the identity of $G(M)$, $(S_{G,ext}(M), x_{0})$ is the universal externally definable $G(M)$-ambit.
\newline
(ii) Likewise, if all types over $M$ are definable, $(S_{G}(M), x_{0})$ is the universal definable $G(M)$-ambit. 
\end{Proposition}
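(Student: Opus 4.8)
The plan is to transplant, almost verbatim, the classical proof that for a discrete group the Stone--\v{C}ech compactification $\beta G$ is the universal $G$-ambit, using the tools already assembled. Throughout I identify $S_{G,ext}(M)$ with $S_{G,M}(N)$ as $G(M)$-flows via Lemma 6.4, and I take $x_0$ to be the realized type $tp(e/N)$ of the identity $e$, so that the orbit of $x_0$ is exactly the set of realized types $\{tp(g/N):g\in G(M)\}$. These are dense (every nonempty externally definable subset of $G(M)$ contains a point of $G(M)$), so $(S_{G,ext}(M),x_0)$ is genuinely a $G(M)$-ambit, and it only remains to verify external definability of the action and the universal property.

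First I would establish that the action is externally definable. Recall from the proof of Proposition 6.1 the semigroup operation $*$ on $S_{G,M}(N)$, and observe that for $g\in G(M)$ and $p\in S_{G,M}(N)$ one has $gp = tp(g/N)*p$: the left factor is realized by $g$ itself, so its finitely satisfiable extension over $Nb$ (for $b\models p$) is just $g$, and $tp(gb/N)=gp$. Since $*$ is continuous in its first coordinate (as for any Ellis semigroup), the map $F_p\colon q\mapsto q*p$ is a continuous self-map of $S_{G,ext}(M)$ whose restriction to the realized types computes the orbit map $g\mapsto gp$. Thus for each $p$ the orbit map factors through the continuous function $F_p$, which is exactly what Definition 6.3 demands.

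Next comes universality. Let $(Y,y_0)$ be any externally definable $G(M)$-ambit. Applying the definition of externally definable action to the point $y_0$, the orbit map $g\mapsto gy_0$ factors through a continuous map $h\colon S_{G,ext}(M)\to Y$, and $h(x_0)=e\,y_0=y_0$. Equivariance is checked on the dense orbit of $x_0$: for $g,g'\in G(M)$ one has $h(g\cdot tp(g'/N)) = h(tp(gg'/N)) = (gg')y_0 = g\,h(tp(g'/N))$, and as $q\mapsto h(gq)$ and $q\mapsto g\,h(q)$ are both continuous and agree on a dense set, they coincide; hence $h$ is a $G(M)$-flow morphism fixing the base point. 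Uniqueness is immediate, since any ambit morphism $\phi$ satisfies $\phi(tp(g/N)) = \phi(g x_0) = g\phi(x_0) = g y_0 = h(tp(g/N))$ on the dense orbit, so $\phi=h$ by continuity. This proves (i).

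Finally, (ii) is a specialization: when all types over $M$ are definable, externally definable subsets of $G(M)$ coincide with definable ones, so $S_{G,ext}(M)=S_G(M)$ and the externally definable and definable categories of ambits coincide; (i) then reads as the asserted universality of $(S_G(M),x_0)$. I expect the only genuinely delicate point to be the continuity bookkeeping in the first step --- confirming that $q\mapsto q*p$ is continuous and really extends the orbit map under the Lemma 6.4 identification. Everything downstream is the standard density-plus-continuity argument for ambits and carries essentially no new content.
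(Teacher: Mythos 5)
The paper states this proposition without proof (it is a survey assertion, with the substance drawn from the cited topological-dynamics literature, e.g.\ Newelski and Krupi\'nski--Pillay), so there is no argument of the paper to compare yours against; I can only judge the proposal on its own terms. Your route is the natural one and is, in outline, correct: density of the realized types makes $(S_{G,ext}(M),x_0)$ an ambit; the identity $gp = tp(g/N)*p$ (which you verify correctly, using that a realized type has a unique extension) reduces external definability of the action to continuity of $F_p\colon q\mapsto q*p$; the existence and uniqueness of a morphism to an arbitrary externally definable ambit is the standard density-plus-continuity argument and works exactly as you wrote it; and part (ii) does follow by specialization, since externally definable subsets of $G(M)$ coincide with definable ones when all types over $M$ are definable.

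The one step whose justification is not legitimate as written is exactly the one you flagged: continuity of $F_p$ ``as for any Ellis semigroup.'' Proposition 6.1 of the paper says only that $E(S_G(M))$ is an \emph{image} of $S_{G,M}(N)$ as a flow and semigroup, not that the two are isomorphic; continuity of the product in the first coordinate on a quotient semigroup does not lift to the cover, so you may not quote the Ellis-semigroup fact for $(S_{G,M}(N),*)$ without further argument. The gap is real but fillable in a few lines, in either of two ways. Directly: fix $b\models p$; for $q\in S_{G,M}(N)$ let $\tilde q$ be its unique global extension finitely satisfiable in $M$ (uniqueness is the fact used in the proof of Proposition 6.1), and recall that any global type finitely satisfiable in $M$ is $Aut({\bar M}/M)$-invariant. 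Then for a formula $\phi(x,d)$ over $N$ one has $\phi(x,d)\in q*p$ iff $\phi(x\cdot b,d)\in\tilde q$ iff $\phi(x\cdot b',d)\in q$, where $b'\in N$ is any realization in $N$ of $tp(b/Md)$ (it exists by $|M|^{+}$-saturation of $N$); hence $F_p^{-1}$ of a basic clopen set is clopen and $F_p$ is continuous. Alternatively: first show that $f\mapsto f(x_0)$ identifies $E(S_{G,ext}(M))$ with $S_{G,ext}(M)$ itself (a net $\pi_{g_i}$ converges pointwise iff $tp(g_i/N)$ converges, by the same invariance argument), after which $(S_{G,M}(N),*)$ really is an Ellis semigroup and your citation becomes valid. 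With either repair the proof is complete. A trivial final point: your references to ``Lemma 6.4'' and ``Definition 6.3'' should be Lemma 6.3 and Definition 6.4 in the paper's numbering.
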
 

Hence, for various reasons, if we consider the group $G(M)$, and $X = S_{G}(M)$, then the Ellis group $u*{\cal M}$ and quotient  $(u*{\cal M})/H$ attached to $X$ (i.e. constructed from the semigroup $E(X) = S_{G,ext}(M)$) can be considered fairly canonical, and there is no harm calling them the definable Ellis group and definable generalized Bohr compactification (although using ``externally definable" in place of definable, would also make sense). 

\subsection{Newelski conjecture}
In the paper \cite{Newelski} already mentioned above, Newelski asked some questions at the end, including about what he calls weak generic types (which I have not defined) and clarifying relationships between groups such as $G/G^{00}$ and Ellis groups. Such as, are they equal assuming a bounded number of weak generics?
This led to some people such as myself dubbing as ``Newelski's conjecture" that in certain tame contexts, such as $NIP$ or $o$-minimal, given a group $G$ definable over a structure $M$, then $G/G_{M}^{00}$ equals the Ellis group $u^*{\cal M}$ described above. Newelski noted already that there is a natural surjective homomorphism from this Ellis group to $G/G^{00}_{M}$.  So the question was whether this must be an isomorphism.

The first result on the question was negative, in the sense of giving a counterexample for groups in $o$-minimal structures without any further hypotheses. See \cite{GPP-SL(2)}. See also \cite{PPY} for a generalization to $SL(2,\Q_{p})$. 
\begin{Proposition} Let $M$ be the structure $(\R, +, \times)$ and $G = SL_{2}$, so $G(M) = SL_{2}(\R)$. In this case all types over $M$ are definable. Then the Ellis group is nontrivial (namely the $2$ element group),  whereas $G/G^{00}$ is trivial.
\end{Proposition}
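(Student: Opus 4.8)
The plan is to prove the three assertions in turn — definability of all types over $M=(\R,+,\times)$, triviality of $G/G^{00}$, and order two for the Ellis group — the last being where the real work lies. For definability of types I would use that $(\R,+,\times)$ is the standard, Dedekind complete, model of $RCF$: by o-minimal cell decomposition a complete $n$-type over $\R$ is pinned down by a finite amount of ``cut'' data in the coordinates, and since $\R$ has no gaps every such cut is either realized or is an infinitesimal/infinite cut around a real parameter, in each case given by a defining schema over $\R$. This is exactly the Marker--Steinhorn definability-of-types phenomenon for the standard real field. Granting it, Proposition 6.1(ii) applies and gives $S_G(M)=E(S_G(M))$: the type space is its own Ellis semigroup under the product $p*q$, so the Ellis group and its $\tau$-topology are to be computed inside $S_G(M)$ itself.

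Next I would dispose of $G/G^{00}$. The group $G=SL_2(\R)$ is generated by its two unipotent one-parameter subgroups, each definably isomorphic to the additive group $(\R,+)$. For the additive group of an o-minimal field one has $(\R,+)^{00}=(\R,+)$ (divisibility together with o-minimality leaves no proper bounded-index type-definable subgroup), so each such subgroup maps trivially into the compact group $G/G^{00}$. As these subgroups generate $G$, the image of $G$ in $G/G^{00}$ is trivial; hence $G^{00}=G$ (and a fortiori $G^0=G$), so $G/G^{00}=1$.

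The heart of the argument is the computation of the Ellis group $u*\mathcal{M}$, and here I would exploit the action of $SL_2(\R)$ on the boundary $\mathbb{P}^1(\R)\cong S^1$. First I would locate a minimal left ideal $\mathcal{M}$ of the semigroup $S_G(M)$ together with an idempotent $u\in\mathcal{M}$ by analysing the limiting dynamics of nonstandard hyperbolic elements: their high powers concentrate near an attracting fixed point of the boundary action, and the associated limit types are the candidates for the support of $\mathcal{M}$. The idempotents in $\mathcal{M}$ should be the ``one-sided'' types sitting at such a boundary fixed point, and I would then evaluate $u*\mathcal{M}$ via the product $*$ inherited from composition in the Ellis semigroup. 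The expected outcome is that $u*\mathcal{M}$ is generated by a single element of order two: the boundary action only remembers $PSL_2(\R)=SL_2(\R)/\{\pm I\}$, and the surviving nontrivial element reflects the two preimages in $SL_2$ of an element of $PSL_2(\R)$, i.e.\ the central $\{\pm I\}$, which is swallowed by $G^{00}=G$ but detected by the full type-space flow. Since the resulting group is finite the $\tau$-topology is discrete, $H$ is trivial, and $u*\mathcal{M}\cong\Z/2\Z$ already coincides with the generalized Bohr compactification. Comparing with the previous paragraph yields $G/G^{00}=1\neq\Z/2\Z=u*\mathcal{M}$, the desired counterexample.

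The main obstacle is precisely this last explicit identification of $\mathcal{M}$ and $u$ inside $S_G(M)$, and the verification that $u*\mathcal{M}$ has \emph{exactly} two elements rather than being trivial or larger. One must control the $*$-product of limit types finely enough to show both that the natural surjection $u*\mathcal{M}\to G/G^{00}=1$ has nontrivial kernel and that this kernel is no bigger than $\Z/2\Z$. The factor of two is geometric — the sign/orientation ambiguity at a boundary fixed point, equivalently the kernel of $SL_2\to PSL_2$ — and tracking it demands careful bookkeeping of the semialgebraic dynamics of $SL_2(\R)$ on $\mathbb{P}^1(\R)$ lifted to the level of types over $M$.
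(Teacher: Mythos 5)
Your treatment of the first two assertions is essentially fine. Definability of all types over $(\R,+,\times)$ is indeed the Marker--Steinhorn theorem, and your argument that $G^{00}=G$ is valid: the two unipotent subgroups are definable copies of $(R,+)$, which has no proper type-definable subgroup of bounded index (this known fact can be seen cleanly by noting that the smallest such subgroup is preserved by every definable group automorphism, in particular by all scalings $x\mapsto\lambda x$, hence is a nonzero ideal of the field $R$), and they generate $SL_2(R)$. Note that the paper itself gives no proof of the Proposition but cites \cite{GPP-SL(2)}; there the triviality of $G/G^{00}$ is obtained instead from normality of $G^{00}$ together with abstract simplicity of $PSL_2(R)$ and perfectness of $SL_2(R)$, so your route is a legitimate alternative.

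The genuine gap is in the third and central assertion: your paragraph on the Ellis group is a plan rather than a proof, and what it defers ("the expected outcome is\dots", "the main obstacle is\dots") is exactly the content of the proposition. Two concrete points. First, the lower bound (nontriviality) needs none of the boundary dynamics you invoke: since $-I$ is central, for any idempotent $u$ in a minimal left ideal the pair $\{u,\,(-I)\cdot u\}$ is a subgroup of $u*{\cal M}$, and $(-I)\cdot u\neq u$ because \emph{no} type in $S_G(\R)$ is fixed by translation by $-I$; indeed entrywise negation fixes no $1$-type over $\R$ except $tp(0)$ (realized $a\mapsto -a$, $r^{+}\mapsto (-r)^{-}$, $r^{-}\mapsto(-r)^{+}$, and $\pm\infty$ swap), and a matrix in $SL_2$ cannot have all four entries realizing $tp(0)$. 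So the center always embeds in the Ellis group, which also confirms your identification of where the $2$ comes from. Second, the upper bound (that there are \emph{exactly} two elements) cannot be reached through the $\mathbb{P}^{1}(\R)$-dynamics you propose to analyse: that action factors through $PSL_2(\R)$, so the Ellis semigroup of the flow $S_{\mathbb{P}^{1}}(\R)$ is blind to the center and is simply the wrong flow; one must exhibit an explicit minimal left ideal inside $S_G(\R)$ itself and compute $u*{\cal M}$ there, which is what \cite{GPP-SL(2)} does via the Iwasawa decomposition $G=KAN$ and a careful computation of the coheir product on types in all three coordinates. Beware also that naive limits of powers of hyperbolic elements are not in any minimal left ideal: such limits fix the realized types of their fixed points, and realized types, being isolated with dense orbit, lie in no minimal subflow; one must compose further with limits of infinitesimal rotations and similar elements before reaching the minimal ideal. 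So the "bookkeeping" you set aside is not a finishing touch; it is the entire proof of the upper bound, and without it the proposition is not established.
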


So the rather loose ``Newelski conjecture" was tightened to ask whether $G/G^{00}$ equals the Ellis group, when $T$ is $NIP$ and $G$ is definably amenable.  (Actually the assumptions are related to Newelski's condition of a bounded number of weak generic types).  Anyway this was proved in \cite{Chernikov-Simon}.

\begin{Proposition} Assume $T$ is $NIP$ and $G$ is definably amenable. Then the natural homomorphism from the Ellis group to $G/G^{00}$ is an isomorphism.
\end{Proposition}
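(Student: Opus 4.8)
The plan is to take as given Newelski's surjective homomorphism $\theta\colon u*\mathcal{M}\to G/G^{00}$ and to prove that its kernel is trivial. First I would fix an $|M|^{+}$-saturated $N\succ M$ and, via Proposition 6.1 and Lemma 6.3, realize the Ellis semigroup $E(S_G(M))$ concretely as the space $S_{G,M}(N)$ of global types finitely satisfiable in $M$, carrying the convolution product $p*q$ described there. In these terms $\theta$ sends a type $p$ to the coset $gG^{00}$ for any $g\models p$ in $G(\bar M)$; this is well defined because $G^{00}$ is type-definable of bounded index, and it is a homomorphism because $\theta(p*q)$ is read off from a realization $ab$ with $a\models p$ and $b\models q$. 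The idempotent $u$ then satisfies $\theta(u)=e$, so $u$ concentrates on $G^{00}$, and it suffices to prove: if $q\in u*\mathcal{M}$ concentrates on $G^{00}$, then $q=u$.

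Second, I would bring in definable amenability. By Proposition 3.16 there is a strongly $f$-generic global type, and by Fact 3.15 every such type has stabilizer exactly $G^{00}$. The structural step is to identify the almost periodic points, i.e.\ the elements of the minimal flow $\mathcal{M}$, with the strongly $f$-generic types finitely satisfiable in $M$. This uses that, under $NIP$, boundedness of the $G$-orbit characterizes strong $f$-genericity, together with the closedness of the $f$-generic locus in the type space. Once this is in place, every element of $u*\mathcal{M}$, and $u$ itself, is strongly $f$-generic with $\mathrm{Stab}=G^{00}$, and left translation by $G^{00}$ acts on $\mathcal{M}$.

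Third comes the heart of the matter: deducing $q=u$ from $q\in u*\mathcal{M}$ concentrating on $G^{00}$. Here I would fix a left-invariant Keisler measure $\mu$ (definable amenability) and exploit the $NIP$ measure machinery. Using the Borel (and, after passing to a countable sublanguage, $F_{\sigma}$) definability of nonforking types from Proposition 3.11, one pushes $\mu$ forward along $\pi\colon G\to G/G^{00}$ and shows the pushforward is the Haar measure $\eta$; pulling this back, $\mathcal{M}$ carries a $G(M)$-invariant measure that is in fact the \emph{unique} one, and whose image under $\theta$ is $\eta$. The Ellis group acts on $\mathcal{M}$ on the right (the Galois action) preserving this unique invariant measure, and $\theta$ intertwines that action with the Haar-preserving translation action of $G/G^{00}$ on itself. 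An element of $\ker\theta$ therefore acts on $\mathcal{M}$ preserving the measure while projecting to the identity of $G/G^{00}$; the generic compact domination phenomenon discussed in Section~4.2, which forces the fibres $\pi^{-1}(c)$ to be generically decided by any given definable set, then rules out any nontrivial such action. Concretely, if $q\neq u$ they are separated by some definable $X$, and since both are fixed by $u$ and concentrate on $G^{00}=\mathrm{Stab}$, translating the discrepancy by $f$-generics spreads it to a set of cosets of positive $\eta$-measure on which $X$ violates domination, a contradiction. In the language of Section~6 this says the $\tau$-closure $H$ of the identity in $u*\mathcal{M}$ is trivial and the generalized Bohr compactification collapses onto the definable Bohr compactification $G/G^{00}$.

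The main obstacle I expect is exactly this final rigidity step. Surjectivity and the homomorphism property are formal, and identifying $\mathcal{M}$ with the strongly $f$-generic types is a clean consequence of Fact 3.15 and Proposition 3.16; but ruling out collapse of the Ellis group genuinely requires the $NIP$ measure machinery (finite VC dimension, the definability of Proposition 3.11, generic domination, and uniqueness of the invariant measure) rather than definable amenability alone. That both hypotheses are essential is signalled by Proposition 6.6: there $T=RCF$ is $NIP$ but $SL_2(\mathbb R)$ fails to be definably amenable, and the Ellis group is strictly larger than the trivial quotient $G/G^{00}$.
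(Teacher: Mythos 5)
Your Step 1 is fine (Newelski's surjection plus reduction to trivial kernel), but the statement on which everything afterwards rests is false, and it is worth noting that the paper itself offers no proof of this proposition: it quotes it from \cite{Chernikov-Simon}, where the argument is considerably more involved than your sketch. The false step is Step 2, the identification of the minimal subflow $\mathcal{M}\subseteq S_{G,M}(N)\cong S_{G,ext}(M)$ with the strongly $f$-generic types finitely satisfiable in $M$. You are conflating two different group actions: strong $f$-genericity, Fact 3.15, and the bounded-orbit criterion all concern the action of the \emph{big} group $G({\bar M})$ on global types, whereas almost periodicity in your flow concerns the action of the \emph{small} group $G(M)$. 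Concretely, let $T=RCF$, $M=(\R,+,\times)$, and let $G$ be the additive group. Then $G$ is abelian, hence definably amenable, and $G^{00}=G$: for $\lambda\neq 0$ the map $x\mapsto \lambda x$ is a definable group automorphism, so it fixes the smallest type-definable subgroup of bounded index setwise, i.e.\ $\lambda G^{00}=G^{00}$; since $G^{00}$ has bounded index it contains some $a\neq 0$, hence contains $\lambda a$ for every $\lambda$, so $G^{00}=G$. By Fact 3.15 any strongly $f$-generic global type is then invariant under translation by every element of $G({\bar M})$, and by $o$-minimality the only such types are the two types at $\pm\infty$ over ${\bar M}$; neither is finitely satisfiable in $M$, since the formula $x>a$ with $a$ infinite has no real solution. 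So $S_{G,ext}(M)$ contains \emph{no} strongly $f$-generic type whatsoever, yet it certainly has minimal subflows, for instance the singleton consisting of the unique global coheir of the $+\infty$ type of $S_{G}(M)$, which is a fixed point of $G(M)$. Thus $u$ and the elements of $\mathcal{M}$ need not be strongly $f$-generic (in this example none are), and the platform for your Step 3 collapses; note that the proposition itself survives here, both the Ellis group and $G/G^{00}$ being trivial.

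Step 3 has independent problems even granting Step 2. There is no canonical way to ``pull back'' Haar measure along $\theta$, and invariant measures are not unique in this setting: in the same example the Dirac measures at the two translation-invariant types at $\pm\infty$ are distinct invariant Keisler measures, so unique ergodicity is not a consequence of definable amenability (it is essentially an $fsg$ phenomenon). Moreover, generic compact domination for definably amenable $NIP$ groups is not available as a black box from Section 4.2: it is itself one of the main theorems of the cited paper \cite{Chernikov-Simon}, proved there by classifying invariant measures as liftings of Haar measure through $G^{00}$-invariant ($f$-generic) global types, so invoking it is close to assuming what is to be proved; and the closing sentence about ``translating the discrepancy by $f$-generics'' is a gesture rather than an argument. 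What the actual proof requires, and what your sketch is missing, is precisely the bridge between the global $f$-generic types (which, as the example shows, may fail to be finitely satisfiable in any small model) and the minimal subflows of the $G(M)$-flow of externally definable types; building that bridge is the real content of the Chernikov--Simon theorem, so the gap here is a missing central idea, not a repairable presentational slip.
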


The paper \cite{Chernikov-Simon} also includes a classification of the invariant Keisler measures on definably amenable groups in $NIP$ theories. 

A possible refinement of the Newelski conjecture, which takes account of the counterexample, is that when $T$ is $NIP$ and $G$ is a definable group then the Ellis group $u*{\cal M}$ of $G(M)$ coincides with the definable generalized Bohr compactification $(u*{\cal M})/H$ which just means that the $\tau$-topology on $u*{\cal M}$ is already Hausdorff. 
This is  true in the definably amenable case, and the isomorphism of the Ellis group with $G/G^{00}_{M}$ is actually an isomorphism of topological groups where the Ellis group is equipped with its $\tau$-topology (although it is not part of the statement in \cite{Chernikov-Simon}), which follows from the discussion in the next subsection. 

\subsection{$G/G_{M}^{000}$}
As mentioned earlier the group $G/G_{M}^{000}$ belongs to model theory, although it is not a compact, Hausdorff group, and in fact  its status as a mathematical object has always been a bit problematic.  For $g\in G = G({\bar M})$ the class of $g$ modulo $G_{M}^{000}$ depends only on $tp(g/M)$, so $G/G_{M}^{000}$ is  a ``quotient" of $S_{G}(M)$ by what turns out to be an $F_{\sigma}$ equivalence relation (on the compact  space $S_{G}(M)$). When $T$ and $M$ are countable, we are in the environment of Borel equivalence relations on Polish spaces, and it turned out to be profitable to study the complexity of the relevant equivalence relation.  The general conjecture was ``smooth" implies that $G_{M}^{000} = G_{M}^{00}$ and so the quotient is compact, Hausdorff. (The Lascar group analogue was mentioned earlier.)

 However, already looking at examples, one had the feeling that more structure could be given to $G/G_{M}^{000}$, in fact a structure belonging to classical noncommutative geometry.   This turns out to be true using the topological dynamics machinery. See \cite{Krupinski-Pillay}.
\begin{Proposition} The natural map $f$ from $S_{G,ext}(M)$ to $G/G_{M}^{000}$ induces a surjective homomorphism $\bar f$ from $(u*{\cal M})/H$ (the definable generalized Bohr compactification) to $G/G_{M}^{000}$. 
\end{Proposition}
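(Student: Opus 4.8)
The plan is to produce $f$ explicitly, check it is a continuous semigroup homomorphism, restrict it to the Ellis group to get a surjective group homomorphism, and finally show that the subgroup $H$ lies in its kernel, so that it descends to $(u*{\cal M})/H$.

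First I would pin down $f$. Using the identification of $S_{G,ext}(M)$ with the Ellis semigroup $E(S_{G}(M))$, i.e. with $S_{G,M}(N)$ (Lemma 6.5 and Proposition 6.2), a point is a type $p$ over $N$ finitely satisfiable in $M$; choosing a realisation $g'$ of $p$ in $G({\bar M})$ I set $f(p)=g'/G^{000}_{M}$. This is well defined because, as recalled in Section 6.2, the coset $g'/G^{000}_{M}$ depends only on $tp(g'/M)=p|M$. That $f$ is a semigroup homomorphism is then a direct computation from the description of the product $*$: if $b$ realises $q$ and $a$ realises the (finitely satisfiable in $M$) extension of $p$ over $Nb$, then $p*q=tp(ab/N)$, so $f(p*q)=(ab)/G^{000}_{M}=(a/G^{000}_{M})(b/G^{000}_{M})=f(p)f(q)$, using normality of $G^{000}_{M}$ and $tp(a/M)=p|M$. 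Finally $f$ is continuous, being the composite of the restriction map $S_{G,M}(N)\to S_{G}(M)$ with the quotient map $S_{G}(M)\to G/G^{000}_{M}$.

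Next I restrict to the Ellis group. Since $u*{\cal M}$ is a group with identity $u$ and $f$ is a semigroup homomorphism into a group, $f(u)=f(u)^{2}$ forces $f(u)=e$, so $f|_{u*{\cal M}}$ is a group homomorphism. For surjectivity, note $f$ is onto $G/G^{000}_{M}$, because every type over $M$ has a coheir over $N$ and the quotient map $S_{G}(M)\to G/G^{000}_{M}$ is onto; then the standard Ellis argument applies: given $a\in E(S_{G}(M))$, since ${\cal M}$ is a left ideal we have $a*u\in{\cal M}$, hence $u*(a*u)\in u*{\cal M}$, and $f(u*(a*u))=f(u)f(a)f(u)=f(a)$. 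Thus $f(u*{\cal M})=f(E(S_{G}(M)))=G/G^{000}_{M}$.

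It remains to show $H\subseteq\ker(f|_{u*{\cal M}})$, and this is the crux. I would first establish that $f|_{u*{\cal M}}$ is $\tau$-continuous: using the description $cl_{\tau}(A)=u*\overline{A}$ (closure in the Hausdorff space $E(S_{G}(M))$) together with continuity of $f$, one gets $f(cl_{\tau}(A))=f(u)f(\overline{A})=f(\overline{A})\subseteq\overline{f(A)}$, which is exactly $\tau$-to-(quotient topology) continuity. Composing with $G/G^{000}_{M}\to G/G^{00}_{M}$ recovers Newelski's homomorphism $u*{\cal M}\to G/G^{00}_{M}$ into a compact Hausdorff group, whose kernel is therefore $\tau$-closed and contains $H$; this already gives $f(H)\subseteq G^{00}_{M}/G^{000}_{M}$. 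The genuinely hard part is to improve this to $f(H)=\{e\}$, i.e. to control the image of $H$ inside the non-Hausdorff layer $G^{00}_{M}/G^{000}_{M}$. Pure topology is insufficient here, since $\{e\}$ is not closed in $G/G^{000}_{M}$ (the defining equivalence relation is only $F_{\sigma}$, so $\overline{\{e\}}=G^{00}_{M}/G^{000}_{M}$), and the $\tau$-continuity above only yields $f(H)\subseteq\bigcap_{W}\overline{W}=G^{00}_{M}/G^{000}_{M}$, the intersection over open neighbourhoods $W$ of $e$. To close this gap I would invoke the finer structure of $G^{000}_{M}$, namely the universal property of $G/G^{000}_{M}$ from Fact 4.8 as the universal group receiving a product-respecting ($p\cdot q\mapsto f(p)f(q)$) surjection from $S_{G}(M)$. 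The plan is to match the semigroup product $*$ on $E(S_{G}(M))$ with the operation $p\cdot q$ on $S_{G}(M)$ and then use this universality, together with the separate continuity of $*$ and the definition $H=\bigcap_{V}cl_{\tau}(V)$, to force $f$ to be trivial on $H$. Verifying this compatibility, and thereby eliminating the $G^{00}_{M}/G^{000}_{M}$-ambiguity, is where I expect the real work, and the main obstacle, to lie.
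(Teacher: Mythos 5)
Your proposal gets all the routine parts right, and they are genuinely the routine parts: the definition of $f$ via realizations and its well-definedness, the computation $f(p*q)=f(p)f(q)$ from the description of $*$, continuity, $f(u)=e$, the idempotent trick $u*(a*u)$ giving surjectivity of $f|_{u*{\cal M}}$, and the containment $f(H)\subseteq G^{00}_{M}/G^{000}_{M}$ obtained by composing with Newelski's homomorphism into the compact Hausdorff group $G/G^{00}_{M}$. But the proposition \emph{is} the assertion $f(H)=\{e\}$, and that you do not prove. Your own diagnosis of why is exactly right and worth repeating: since $e$ is topologically indistinguishable from every other point of $G^{00}_{M}/G^{000}_{M}$ in the quotient topology on $G/G^{000}_{M}$, no argument phrased purely in terms of that topology (continuity, closures, limits of nets) can ever separate $f(H)=\{e\}$ from $f(H)\subseteq G^{00}_{M}/G^{000}_{M}$; everything you establish already factors through the much weaker statement about $G/G^{00}_{M}$. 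So there is a genuine gap at the unique non-routine step. Note that the paper itself offers no proof either --- it quotes \cite{Krupinski-Pillay}, where precisely this factorization through $H$ is the technical heart of the paper. The argument there is model-theoretic rather than topological: one analyses $\tau$-closures at the level of \emph{realizations} (a key point being that a limit of types finitely satisfiable in $M$ is realized by an ultralimit of realizations, so that $f$ of a limit of products can be computed as a product of $f$-values --- something abstract continuity into a non-Hausdorff space cannot give), combined with the description of $G^{000}_{M}$ as the subgroup generated by $\{a^{-1}b : tp(a/M)=tp(b/M)\}$, to check directly that realizations of the types in $H$ lie in $G^{000}_{M}$.

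I would also warn you off the specific repair you propose. Fact 4.8 makes $G/G^{000}_{M}$ \emph{initial}: it produces surjective homomorphisms \emph{out of} $G/G^{000}_{M}$ onto any product-respecting group quotient of $S_{G}(M)$, whereas you need to show that a map \emph{into} $G/G^{000}_{M}$ kills $H$. The only way to bring universality to bear is to exhibit $(u*{\cal M})/H$ (or an intermediate group through which $f|_{u*{\cal M}}$ factors) as a product-respecting quotient $\theta:S_{G}(M)\to (u*{\cal M})/H$ compatible with $\bar f$, i.e.\ with $\bar f\circ\theta$ equal to the natural map $\nu:S_{G}(M)\to G/G^{000}_{M}$. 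Universality would then yield $\bar\theta:G/G^{000}_{M}\to (u*{\cal M})/H$ with $\theta=\bar\theta\circ\nu$; since $\nu$ is onto, $\bar f\circ\bar\theta=\mathrm{id}$, and since $\bar\theta$ is also onto, $\bar f$ would be an isomorphism. That is false in general: by Proposition 6.9, $\ker\bar f$ is dense in $\bar f^{-1}(K)$, so $\bar f$ has nontrivial kernel whenever $G^{00}_{M}\neq G^{000}_{M}$ (for instance in the example of \cite{Conversano-Pillay}). So the universal property cannot by itself ``eliminate the $G^{00}_{M}/G^{000}_{M}$-ambiguity''; the missing ingredient really is the realization-level analysis indicated above.
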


We also have a surjective homomorphism $\pi$ say from $G/G_{M}^{000}$ to the definable Bohr compactification $G/G_{M}^{00}$, and let $K$ be the kernel of $\pi$ which identifies with $G_{M}^{00}/G_{M}^{000}$.
\begin{Proposition}  ${\bar f}^{-1}(K)$ is a closed subgroup of  $(u*{\cal M}/H)$, and the kernel of the surjective homomorphism from ${\bar f}^{-1}(K)$ to $K$ given by ${\bar f}$ is a dense subgroup.
 Hence  $K$ has canonically the structure of the quotient of a compact Hausdorff group by a dense subgroup. 
\end{Proposition}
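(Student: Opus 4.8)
The plan is to exhibit $K=G_{M}^{00}/G_{M}^{000}$ as the quotient $\bar f^{-1}(K)/\ker(\bar f)$, where $\bar f^{-1}(K)$ is a compact Hausdorff group and $\ker(\bar f)$ is a dense subgroup; the Proposition is then immediate. Write $A=(u*{\cal M})/H$ for the compact Hausdorff generalized Bohr compactification and $B=G/G_{M}^{000}$ with its logic topology. Since $K$ is a subgroup of $B$ and $\bar f$ is a homomorphism, $\bar f^{-1}(K)$ is automatically a subgroup, so only its closedness is at issue. For this I would use that the composite $\pi\circ\bar f\colon A\to G/G_{M}^{00}$ is a continuous homomorphism into the compact Hausdorff group $G/G_{M}^{00}$ (this is just the comparison map to the definable Bohr compactification, continuous because $G_{M}^{00}$ is type-definable). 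As $\bar f^{-1}(K)=(\pi\circ\bar f)^{-1}(e)$ is the kernel of a continuous map into a Hausdorff group, it is closed, hence a compact subgroup of $A$. Because $e\in K$, the kernel of $\bar f|_{\bar f^{-1}(K)}\colon \bar f^{-1}(K)\to K$ is exactly $\ker(\bar f)$, so everything reduces to showing that $\ker(\bar f)$ is dense in $\bar f^{-1}(K)$.

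One inclusion is soft. In the logic topology on $B=G/G_{M}^{000}$ the closure of the identity is $\overline{\{e\}}=G_{M}^{00}/G_{M}^{000}=K$ (equivalently, $G/G_{M}^{00}$ is the Hausdorffification of $G/G_{M}^{000}$). Since $\bar f$ is a continuous homomorphism, $\bar f(\overline{\ker(\bar f)})\subseteq\overline{\bar f(\ker(\bar f))}=\overline{\{e\}}=K$, and therefore $\overline{\ker(\bar f)}\subseteq \bar f^{-1}(K)$.

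The reverse inclusion $\bar f^{-1}(K)\subseteq\overline{\ker(\bar f)}$ is the heart of the matter, and I expect it to be the main obstacle. I would reduce it to the claim that $\bar f$ is a topological quotient map onto $B$, that is, that the logic topology on $G/G_{M}^{000}$ is the quotient topology transported along $\bar f$ from $A$. Granting this, the argument closes cleanly: $\overline{\ker(\bar f)}$ is a closed subgroup containing $\ker(\bar f)$, hence a union of $\ker(\bar f)$-cosets and so $\bar f$-saturated; among closed $\bar f$-saturated sets containing $\ker(\bar f)=\bar f^{-1}(e)$ the smallest is $\bar f^{-1}(\overline{\{e\}})=\bar f^{-1}(K)$, because under a quotient map these correspond to closed sets of $B$ containing $e$. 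Thus $\bar f^{-1}(K)\subseteq\overline{\ker(\bar f)}$, and combined with the soft inclusion we get $\overline{\ker(\bar f)}=\bar f^{-1}(K)$, i.e. the desired density.

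It remains to justify the quotient-map property, which is the genuinely technical point and must be read off from the construction of $\bar f$ in the preceding Proposition. Concretely I would compare three topologies: the $\tau$-topology on the Ellis group $u*{\cal M}$, the way $H$ is defined as the intersection of $\tau$-closures of $\tau$-neighbourhoods of the identity, and the defining quotient topology on $G/G_{M}^{000}$ coming from $S_{G,ext}(M)$ via the map $f$. The task is to show that a subset of $G/G_{M}^{000}$ whose $\bar f$-preimage is closed in $A$ is already logic-closed, by pulling it back through $f\colon S_{G,ext}(M)\to G/G_{M}^{000}$ and checking that the passage to $(u*{\cal M})/H$ absorbs exactly the indistinguishability encoded by $H$. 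Once this is established, $K$ is displayed as the quotient of the compact Hausdorff group $\bar f^{-1}(K)$ by the dense subgroup $\ker(\bar f)$, which is the final assertion.
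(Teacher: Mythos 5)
The survey does not actually prove this proposition; it is quoted from \cite{Krupinski-Pillay}, so your proposal must stand on its own. Its formal skeleton is sound: $\bar f^{-1}(K)$ is a subgroup for free; the identification of the kernel of the restriction of $\bar f$ to $\bar f^{-1}(K)$ with $\ker(\bar f)$ is correct; your computation that the closure of the identity in the logic topology on $G/G_{M}^{000}$ is exactly $K=G_{M}^{00}/G_{M}^{000}$ is correct; the inclusion $\overline{\ker(\bar f)}\subseteq\bar f^{-1}(K)$ does follow from continuity of $\bar f$; and your derivation of the reverse inclusion from the claim that $\bar f$ is a topological quotient map is logically valid (a closed subgroup containing the kernel is $\bar f$-saturated, and under a quotient map closed saturated sets correspond to closed sets downstairs).

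The genuine gap is that the entire content of the proposition has been loaded onto the quotient-map claim, and that claim is neither proved nor reducible to a ``verification read off from the construction''. It is at least as strong as the proposition itself (your own argument derives the proposition from it in a few lines), and no soft point-set argument can give it, for the following concrete reason: the underlying map $f\colon S_{G,ext}(M)\to G/G_{M}^{000}$ sends many closed sets to non-closed sets. Indeed, in the only interesting case $G_{M}^{00}\neq G_{M}^{000}$, no singleton of $G/G_{M}^{000}$ is logic-closed (a coset of $G_{M}^{000}$ is type-definable only if $G_{M}^{000}$ is, which by minimality of $G_{M}^{00}$ forces $G_{M}^{000}=G_{M}^{00}$), whereas every singleton in the compact Hausdorff spaces $S_{G,ext}(M)$ and $(u*{\cal M})/H$ is closed. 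So closedness of images can only hold for saturated sets, and any proof must exploit saturation, i.e.\ the group and semigroup structure, in an essential way; your plan (``pulling back through $f$ and checking that the passage to $(u*{\cal M})/H$ absorbs the indistinguishability encoded by $H$'') restates the claim rather than indicating the mechanism. The needed mechanism consists of the dynamical identities behind the construction: that $f$ is a homomorphism for the semigroup operation $*$ on $S_{G,ext}(M)$, that $f(u)=e$, and the $\tau$-topology calculus $cl_{\tau}(A)=u*\overline{A}$ for $A\subseteq u*{\cal M}$ (closure taken in $S_{G,ext}(M)$); note that even the two continuity statements you use without proof (continuity of $\bar f$ into the logic topology, and of $\pi\circ\bar f$ into $G/G_{M}^{00}$) require these identities, since the $\tau$-topology is coarser than the topology induced from the type space, so continuity on $S_{G,ext}(M)$ does not automatically descend to the Ellis group. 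This is exactly the work carried out in \cite{Krupinski-Pillay}, where density is obtained by direct computation with these identities; statements of the kind you want, identifying a logic topology with a quotient of the $\tau$-topology, are themselves substantial theorems in later work (compare \cite{KPR}) rather than formal consequences of the definitions. In short: your reduction relocates the difficulty but does not resolve it.
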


\subsection{Amenability and automorphism groups}
The study of automorphism groups (as topological groups) from the point of view of topological dynamics has been very profitable, and in \cite{KPT} the connection was made with structural Ramsey theory (for automorphism groups of countable structures). 
When the theory $Th(M)$ of the countable structure $M$ is $\omega$-categorical, then $Aut(M)$, as a topological group, is an invariant of  $Th(M)$ (in fact $Th(M)$ can be recovered from $Aut(M)$), so the topological dynamics of $Aut(M)$ is something also model-theoretic. 

In \cite{Krupinski-Pillay2} we proved that if $G$ is a definable group which is definably amenable, then $G_{M}^{000} = G_{M}^{00}$, using elaborations of the stabilizer theorem for approximate subgroups (and \cite{Massicot-Wagner}). This was adapted in the same paper to the automorphism group context to show that if $T$ is $\omega$-categorical and the automorphism group of the (unique) countable model of $T$ is amenable, then $T$ is $G$-compact, i.e. $Gal_{L}(T) = Gal_{KP}(T)$. 
In \cite{HKP} we introduced the notion of an amenable first order theory $T$ (which is considerably weaker than $Aut(M)$ being amenable as a topological group when $T$ is $\omega$-categorical) and proved:
\begin{Proposition} If $T$ is amenable as a first order theory, then $T$ is $G$-compact. 
\end{Proposition}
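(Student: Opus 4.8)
The plan is to prove $G$-compactness by showing that the canonical continuous surjection $Gal_L(T)\to Gal_{KP}(T)$ has trivial kernel. Recall that this kernel is exactly the closure of the identity in the (quasi-compact, a priori non-Hausdorff) logic topology on $Gal_L(T)$, so the statement to be proved is equivalent to that topology being Hausdorff, equivalently to the $F_\sigma$ equivalence relation $E_L$ coinciding with the type-definable relation $E_{KP}$. Since $E_L$ is the union $\bigcup_n X^n$ of the iterated compositions of the type-definable symmetric ``Lascar distance $\le 1$'' relation $X(x,y)$ (the intersection of all thick relations), the goal reduces, by a boundedness/compactness argument, to producing a uniform $n$ with $E_{KP}(a,b)\Rightarrow X^n(a,b)$; that is, to bounding the Lascar distance on each $E_{KP}$-class.

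First I would extract from amenability of $T$ an $Aut(\bar{M})$-invariant Keisler (finitely additive, Borel, probability) measure on a type space $S_m(M)$, where $m$ enumerates a small model, which surjects continuously onto $Gal_L(T)$; the $Aut(\bar{M})$-invariance then pushes down to a left-invariant mean compatible with the action on the Lascar structure. The role of the measure is to make the thick relations, and hence $X$, \emph{wide}: each thick relation, being one of bounded spread, should have positive measure, so that $X$ behaves like a generic symmetric neighbourhood of the identity to which a stabilizer argument can be applied.

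Next, and this is the technical heart, I would run a Massicot--Wagner / Schlichting-style stabilizer argument in this measured setting, in the same spirit as the automorphism-group case treated in \cite{Krupinski-Pillay2}, but now over the genuinely weaker hypothesis of theory-amenability. Using the measure, one shows that a suitable symmetric generic piece is an approximate subgroup and that its stabilizer is a type-definable (equivalently $E_{KP}$-level) bounded-index subgroup contained in a few compositions $X^k$. A type-definable bounded-index subgroup of this kind must absorb the connected component, and the resulting type-definability of the associated equivalence relation forces $E_L=E_{KP}$, which is precisely $G$-compactness.

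The hard part will be the transfer step. The invariant measure furnished by amenability naturally lives on compact Hausdorff type (or bounded-hyperimaginary) spaces, whereas the object that must ultimately be controlled, $Gal_L(T)$ with its ultraimaginary/$E_L$ structure, is only quasi-compact and non-Hausdorff, so the stabilizer theorem does not apply verbatim. The delicate points are (i) verifying that thick relations carry positive measure and that $X$ together with $\mu$ satisfies the symmetry and bounded-covering hypotheses needed to feed into the stabilizer theorem, and (ii) checking that the type-definable subgroup it returns descends to an honest bound on Lascar distance rather than merely on some coarser invariant. Making the measure ``see'' the non-Hausdorff Lascar structure, so that the stabilizer produced is genuinely type-definable over $\emptyset$ and of bounded index, is where the real work lies.
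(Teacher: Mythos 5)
Note first that the paper itself contains no proof of this Proposition: it is stated as a citation of the main theorem of \cite{HKP} (Hrushovski--Krupi\'nski--Pillay, \emph{Amenable first order theories}), following on from the definable-group and $\omega$-categorical automorphism-group cases treated in \cite{Krupinski-Pillay2}. Measured against that cited work, your opening reduction is correct and standard: $Gal_{KP}(T)$ is the maximal Hausdorff quotient of $Gal_{L}(T)$; $E_{L}=\bigcup_{n}X^{n}$ where $X$ (Lascar distance at most $1$) is the intersection of the thick relations and hence type-definable; $E_{KP}$ is the intersection of all bounded $\emptyset$-type-definable equivalence relations; so it suffices to produce one bounded $\emptyset$-type-definable equivalence relation $F$ with $F\subseteq X^{k}$ for some $k$, since then $E_{KP}\subseteq F\subseteq E_{L}\subseteq E_{KP}$. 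Your plan to obtain $F$ from a Massicot--Wagner-style stabilizer argument fed by the invariant measures that amenability provides is indeed the strategy of the actual proof.

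The gap is that the two steps you defer are not routine verifications but the entire theorem. The stabilizer theorem of \cite{Massicot-Wagner}, and its use in \cite{Krupinski-Pillay2}, require a ($\vee$-)definable group, or at least a group equipped with an invariant mean on a multiplication-compatible algebra of its subsets; here the only available group is $Aut(\bar M)$, which is not definable, and first-order amenability supplies invariant measures only on type spaces --- equivalently, means on algebras of relatively definable subsets of single $Aut(\bar M)$-orbits, not on subsets of the group itself. This is exactly why theory amenability is ``considerably weaker'' than amenability of $Aut(M)$, as the paper emphasizes, and why the automorphism-group argument of \cite{Krupinski-Pillay2} does not transfer verbatim. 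Sets of the form $\{\sigma\in Aut(\bar M):\ \models\Theta(\sigma(m),m)\}$ for thick $\Theta$ can be measured through the orbit of $m$, and the greedy covering argument does give them positive mean (so your point (i) is fine in spirit); but the translates, products and stabilizers occurring in the Massicot--Wagner induction are not visible in that orbit algebra, so the theorem cannot simply be ``fed'' the data you describe. Constructing, from these orbit measures alone, a bounded-index type-definable object whose associated equivalence relation sits inside some $X^{k}$ --- your ``transfer step'' --- is precisely the content of \cite{HKP}. You correctly flag it as where the real work lies, but that means what you have written is a road map to the known proof, not a proof.
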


\section{The free group}

An important question of Tarski was whether all free groups $F_{n}$ on $n$ generators for $n\geq 2$ are elementarily equivalent. 
This was solved by two independent groups of researchers, Kharlampovich-Myasnikov and Sela, in a series of papers. (See \cite{KM}, \cite{Sela} for example, also for more references.)  So we have a complete theory $T_{fg}$ in the language of groups (with symbols for the group operation, inversion, and the identity element) which is the common theory of the $F_{n}$ ($n\geq 2$).

Subsequently Sela \cite{Sela-stability} proved that $T_{fg}$ is stable. This result is accepted by logicians and model theorists, although there has as yet been no independent proof (by logicians). One would like an account based on the combinatorics of words in the alphabet or language mentioned above. Possibly this is too naive a set-up, compared with the algebraic-topological (geometric group-theoretic) framework of Sela.  Sela's solutions, both to Tarski's problem and stability involved proving a relative quantifier elimination theorem down to Boolean combinations of $\forall\exists$ formulas.

Nevertheless, we can choose to take the stability of $T_{fg}$ for granted, and see where it leads us, in terms of problems and questions.
$T_{fg}$ is a {\em new stable group} (like a ``new leaf" in the film with Walter Matthau and Elaine May, or like a ``new galaxy" as in no film that I know), with very different properties from the already known stable groups. The subject ``algebraic geometry over the free group" is about the structure of solutions of equations in free groups, but in fact maybe should be about the category $Def(T_{fg})$.

Sela's result that the natural inclusions of the $F_{n}$ for $n\geq 2$ are elementary, implies that the free groups on infinitely many generators are models of $T_{fg}$. It is natural to ask whether (parts of) the proof of stability of $T_{fg}$ can be simplified by looking directly at  the models $F_{\kappa}$ for $\kappa$ infinite.  There are a couple of examples, noted by Poizat in his early but fundamental paper on stable groups  \cite{Poizat-generics}. The set $I$ of free generators of $F_{\omega}$ is an indiscernible set in $F_{\omega}$, and for any definable (in fact invariant over a finite set) subset $X$ of $F_{\omega}$, $X$ intersects $I$ in a finite or cofinite set.  It follows that:
\begin{Fact} The free group is connected (no proper definable subgroup of finite index) and moreover the type of some/any element of $I$ over $\emptyset$ is the unique generic type of $T_{fg}$ over $\emptyset$, which we call $p_{0}$.  
\end{Fact}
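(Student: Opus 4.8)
The plan is to establish connectedness first, using only indiscernibility of $I$ and the action of $Aut(F_\omega)$ on free bases, and then to pin down $p_0$ as the generic type through the average type of $I$. For connectedness, since $T_{fg}$ is stable the various connected components coincide independently of the parameter set (as remarked in Section 4), so it suffices to show that $G$ has no proper $\emptyset$-definable subgroup of finite index. Given such an $H$, I would replace it by its normal core $\{g:\forall x\,(x^{-1}gx\in H)\}$, which is again $\emptyset$-definable, normal and of finite index, so that I may assume $H\trianglelefteq G$ and work with the $\emptyset$-definable quotient homomorphism $\pi\colon G\to G/H$ onto a finite group.

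The key point is that $\pi$, being $\emptyset$-definable with finite image, is constant on $\emptyset$-types; hence by indiscernibility $\pi$ sends every element of $I$, and so every realization of $p_0$, to a single $\gamma\in G/H$. Now the free-group input enters: every primitive element of $F_\omega$ (one belonging to some free basis) realizes $p_0$, because $Aut(F_\omega)$ acts transitively on bases and group automorphisms are automorphisms of the structure, hence preserve types over $\emptyset$. In particular $i_1$, $i_2$ and $i_1i_2$ are primitive, the last by the Nielsen move exhibiting $\{i_1i_2,i_2,i_3,\dots\}$ as a basis, so all three realize $p_0$. Applying $\pi$ gives $\gamma=\pi(i_1i_2)=\pi(i_1)\pi(i_2)=\gamma^2$, whence $\gamma=e$; as $I$ generates $G$ and $\pi(I)=\{e\}$, the quotient $G/H$ is trivial and $H=G$. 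This proves connectedness.

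For the generic type statement, connectedness already yields, by the theory of Section 3.1, a unique global generic $q^*$, which is translation-invariant with $Stab(q^*)=G^0=G$, together with a unique generic type over $\emptyset$ (any generic over $\emptyset$ has a non-forking, hence still generic, global extension, which must be $q^*$). To identify this type with $p_0$ I would use the global average type $r=Av(I/\bar M)$: it is well defined because stability forces every formula to be finite or cofinite on $I$, and it restricts to $p_0$. The goal is to show $r$ is translation-invariant; since a translation-invariant global type in a connected stable group is generic, this forces $r=q^*$ and hence $p_0=r|\emptyset=q^*|\emptyset$, the unique generic over $\emptyset$. For the invariance, the Nielsen automorphism $\sigma$ fixing $i_1$ with $\sigma(i_k)=i_1i_k$ $(k\ge 2)$ extends to $\hat\sigma\in Aut(\bar M)$ and carries $I$, up to one discarded point, onto $i_1I$, so that $i_1\cdot r=\hat\sigma(r)$; granting $Aut(\bar M)$-invariance of $r$ this gives $i_1\cdot r=r$, and more generally $Stab(r)$ then contains every realization of $p_0$, which generate $G$ since every group element is a finite product of basis elements; hence $Stab(r)=G$.

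The main obstacle is precisely this last step, the translation-invariance of $r=Av(I/\bar M)$. The Nielsen computation reduces it to the $Aut(\bar M)$-invariance of $r$, that is, to the \emph{stationarity} of $p_0$ (its $\phi$-definitions being already over $\emptyset$); equivalently, one must exclude a generic $\emptyset$-definable set meeting no primitive element. By contrast, connectedness is soft once one notices that $i_1i_2$ is again primitive. I would therefore put the weight of the argument on stationarity of $p_0$ -- equivalently, on the \emph{syndeticity} of the set of primitive elements in $G$ -- since this is where the combinatorics of the free group, rather than formal stable group theory, is genuinely needed.
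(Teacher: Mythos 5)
Your overall strategy (free-group automorphisms via Nielsen moves feeding into stable group theory) is exactly the right one --- it is Poizat's argument that the paper is invoking --- but two of your steps do not hold up, and the second is the heart of the Fact.

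First, the lemma on which your connectedness argument pivots, that ``$\pi$, being $\emptyset$-definable with finite image, is constant on $\emptyset$-types,'' is false. The fibres of $\pi$ are the cosets of $H$; they are algebraic over $\emptyset$ but in general not $\emptyset$-definable, and automorphisms may permute them. For example, in $Th(\Z/3\Z\oplus\Q,\,+)$ the subgroup $H=3G$ is $\emptyset$-definable of index $3$, yet $x\mapsto -x$ shows that $(1,0)$ and $(2,0)$ have the same type over $\emptyset$ while lying in different cosets. The step of yours that genuinely relies on the lemma is ``$i_1i_2$ realizes $p_0$, hence $\pi(i_1i_2)=\gamma$,'' so as written the connectedness proof breaks exactly there. (That all elements of $I$ go to a single coset is correct, but the reason is indiscernibility of the \emph{set} $I$: an $\emptyset$-definable finite-index coset equivalence relation cannot separate two members of an infinite indiscernible set.) The repair is equivariance rather than constancy: your Nielsen automorphism $\sigma$ (with $\sigma(i_1)=i_1i_2$ and $\sigma(i_k)=i_k$ for $k\geq 2$) preserves the $\emptyset$-definable $H$, hence induces an automorphism $\bar\sigma$ of the finite group $G/H$; since $\sigma$ fixes $i_2$, $\bar\sigma$ fixes $\gamma$, so $\pi(i_1i_2)=\bar\sigma(\pi(i_1))=\gamma$, and your computation $\gamma=\gamma^2$ then goes through.

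Second, and more seriously, the statement that $p_0$ is the \emph{unique generic} is never proved: you reduce it to stationarity of $p_0$ (equivalently, $Aut({\bar M})$-invariance of $r=Av(I/{\bar M})$) and stop, declaring that the main obstacle. But that reduction is itself the wrong turn, created by fixing a single automorphism $\hat\sigma$ in advance and then needing $\hat\sigma(r)=r$. Choose the automorphism \emph{per formula} instead: given $g\in F_\omega$ and $\phi(x)$ with parameters in $F_\omega$, let $I_0\subseteq I$ be a finite set supporting both $g$ and the parameters of $\phi$, and let $\sigma_g$ fix $I_0$ pointwise and send $i\mapsto gi$ for $i\in I\setminus I_0$ (this is an automorphism of $F_\omega$, with inverse determined by $i\mapsto g^{-1}i$). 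Then $\sigma_g$ fixes $\phi$ and carries $I$ onto $gI$ up to finitely many elements, so $\phi\in r$ iff $\phi\in g\cdot r$, with no invariance hypothesis at all. Hence $(g\cdot r)|F_\omega=r|F_\omega$ for every $g\in F_\omega$; since $r$ and $g\cdot r$ are definable over the model $F_\omega$ (average types of indiscernible sets in a stable theory), stationarity of types over models gives $g\cdot r=r$, i.e.\ $G(F_\omega)\subseteq Stab(r)$. Each $\phi$-stabilizer of $r$ is then an $F_\omega$-definable subgroup containing all of $G(F_\omega)$, hence equals $G$ by elementarity, so $Stab(r)=G$. This single computation yields everything at once: $Stab(r)\leq G^{0}$ always, so $G=G^{0}$ (connectedness, making your first paragraph unnecessary), and a global type whose stabilizer has bounded index is generic, so $r$ is the unique global generic of the connected group $G$ and $p_0=r|\emptyset$ is the unique generic over $\emptyset$. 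Stationarity of $p_0$ falls out as a \emph{corollary}; taking it as the needed input, as you propose, has the logical order backwards.
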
 

In the free group elements have unique square roots. If $a$ realizes $p_{0}$ then $a$ is not a square but $a$ is algebraic over $a^{2}$, from which Poizat concludes:
\begin{Fact} $T_{fg}$ is not superstable.
\end{Fact}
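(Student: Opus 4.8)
The plan is to argue by contradiction, exploiting the two displayed facts about the generic type $p_{0}$ together with the connectedness recorded in Fact 7.1. So suppose, for contradiction, that $T_{fg}$ is superstable. I would work in the monster model, fix a realization $a$ of $p_{0}$, and compare $\operatorname{tp}(a/\emptyset)$ with $\operatorname{tp}(a^{2}/\emptyset)$. The starting observation is that $a$ and $a^{2}$ are interalgebraic over $\emptyset$: trivially $a^{2}\in\operatorname{dcl}(a)$, while $a\in\operatorname{acl}(a^{2})$ is exactly the second displayed fact (in fact, since square roots in the free group are unique, $a\in\operatorname{dcl}(a^{2})$, so the two are even interdefinable). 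Hence $\operatorname{acl}(a)=\operatorname{acl}(a^{2})$.

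Under the superstability assumption the $U$-rank is ordinal-valued, and $U$-rank is invariant under interalgebraicity, so $U(a^{2}/\emptyset)=U(a/\emptyset)$. Now $p_{0}$ is generic, and in a superstable group a type is generic exactly when its $U$-rank is maximal, i.e.\ equal to $U(F)$; thus $U(a/\emptyset)=U(F)$ and therefore $U(a^{2}/\emptyset)=U(F)$ as well. Running the same equivalence backwards, $\operatorname{tp}(a^{2}/\emptyset)$ is again a generic type over $\emptyset$. But Fact 7.1 asserts that $p_{0}$ is the \emph{unique} generic type over $\emptyset$, so $\operatorname{tp}(a^{2}/\emptyset)=p_{0}=\operatorname{tp}(a/\emptyset)$. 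This is absurd: the formula $\sigma(x):\equiv\exists y\,(y^{2}=x)$ is over $\emptyset$ and lies in $\operatorname{tp}(a^{2}/\emptyset)$ (witnessed by $y=a$), whereas the first displayed fact says $a$ is not a square, so $\neg\sigma\in\operatorname{tp}(a/\emptyset)$. The two types are therefore distinct, and this contradiction shows $T_{fg}$ is not superstable.

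The one point to pin down carefully is the standard superstable-group fact that genericity coincides with maximality of $U$-rank over an arbitrary base (here $\emptyset$); this is the real engine of the argument and is precisely where superstability is used, since without an ordinal-valued rank there is no reason for $a^{2}$ to inherit the genericity of $a$. A complementary, more geometric way to see what is going on --- useful as a sanity check --- is that the $\emptyset$-definable sets $X_{n}=\{x: x \text{ is a } 2^{n}\text{-th power}\}$ form a strictly descending chain, with $a^{2^{n}}\in X_{n}\setminus X_{n+1}$ by uniqueness of roots; the failure of superstability is the rank-theoretic shadow of this infinite descent. I would nonetheless present the $U$-rank contradiction as the actual proof, since it uses the stated facts directly and sidesteps any analysis of definable subgroups of the (nonabelian) free group.
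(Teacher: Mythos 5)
Your proof is correct and is essentially the argument the paper attributes to Poizat: starting from the stated facts that a realization $a$ of $p_{0}$ is not a square while $a\in\operatorname{acl}(a^{2})$, you use invariance of $U$-rank under interalgebraicity and the standard characterization of generics in superstable groups as the types of maximal $U$-rank to force $\operatorname{tp}(a^{2}/\emptyset)=p_{0}=\operatorname{tp}(a/\emptyset)$, contradicting the squares formula --- exactly the route the paper sketches. One caution: your closing ``sanity check'' is purely heuristic and not load-bearing, since an infinite strictly descending chain of definable sets by itself does not contradict superstability (compare the chain $2^{n}\mathbb{Z}$ in the superstable group $(\mathbb{Z},+)$), but you rightly present the $U$-rank contradiction as the actual proof.
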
 

We did not define regular types in this article. Regular types are generalizations of types of Morley rank $1$ or $U$-rank $1$.  Nevertheless the same argument shows that the $p_{0}$ is not regular.
Some more technical results about $p_{0}$ were proved in \cite{Pillay-weight},  and \cite{Sklinos-generic}, including that $p_{0}$ has infinite weight, even witnessed in $F_{\omega}$: for $a$ realizing $p_{0}$ there is some infinite $f$-independent sequence $(b_{i}:i<\omega)$ such that $a$ forks with $b_{i}$ for all $i$.

We still  know very little about the category $T_{fg}$, although Sklinos and collaborators have made some progress (but using heavily the set-up and context of Sela, where not everything is accessible to model-theorists).  
Quite possibly a firmer understanding of why $T_{fg}$ is stable would make a difference. 
As a related case where our knowledge is basically complete, let us consider the theory of $F_{1}$, otherwise known as 
$T = Th(\Z,+)$.  Now $T$ is not totally transcendental (as there is an infinite descending chain $2^{n}\Z$ of subgroups), but it is superstable, even of rank $1$, and it is the archetypical example of a superstable, non $t.t$ theory, but where both categories $Def(T)$ and $Mod(T)$ are well-understood.  One of the main things is that as an abelian  group (with no additional structure), $(\Z. +)$ is a $1$-based group, so the definable sets (in arbitrary models) are Boolean combinations of cosets of $acl(\emptyset)$-definable subgroups of $\Z^{n}$. These can be easily computed. And also any model of $T$ is of the form $G \oplus \Q^{(\kappa)}$ where $G$ is an elementary substructure of the profinite completion ${\hat\Z}$ of $\Z$. In particular (assuming $CH$), the $\aleph_{1}$-saturated models of $T$ can be described as ${\hat \Z} \oplus \Q^{\kappa}$ (for $\kappa$ a cardinal $\geq \aleph_{1}$).  

So this begs the question of whether  there is a classification of the $\aleph_{1}$-saturated models of $T_{fg}$. 
Now a nonstructure property for $\aleph_{1}$-saturated models of a stable (possibe unsuperstable) theory $T$ is that $T$ has the so-called $DOP$ (dimensional order property of Shelah).  See \cite{Pillay-Sklinos-DOP} for both the definitions and the proof that $T_{fg}$ has the $DOP$. So there is NO reasonable structure theorem for $\aleph_{1}$-saturated models of $T_{fg}$. This may explain why it has so far been difficult to describe ``nonstandard" models of $T_{fg}$ (where nonstandard means not free), in particular saturated models. 

Let now $F$ denote an arbitrary model of $T_{fg}$, such as a ``standard model" ($F_{n}$ for $n\geq 2$).
Some obvious definable subgroups of $F$ are the centralizers of elements $a$, $C_{F}(a)$.  Some important observations are that these definable subgroups, with the  induced structure from $F$, are just models of $Th(\Z,+)$ (maybe with constants added).   So these are examples of $1$-based $U$-rank $1$ groups definable/interpretable in $T_{fg}$.

Here are some questions.
\begin{Question}
(i) What are the $U$-rank $1$ formulas/ types,  in $T_{fg}$. Are they only the centralizers described earlier?
\newline 
(ii) What are the superstable formulas (definable sets) in $T_{fg}$?
\newline
(iii) What are the definable/interpretable fields in $T_{fg}$?
\newline
(iv) What are the definable/interpretable groups in $T_{fg}$?
\newline
(v) What are the regular types in $T_{fg}$? 
\end{Question}

For example in \cite{Byron-Sklinos} it is proved that there are no definable infinite fields.

\end{document}